\documentclass[12pt]{article}

\usepackage{enumerate}
\usepackage{epsf,epsfig,amsfonts}

\usepackage{mathdots}
\usepackage{amsmath,amssymb,amsthm}
\usepackage{url}
\parskip 2ex
\usepackage{mathtools}
\usepackage{epsfig,amsfonts,graphicx,color}
\newtheorem{Th}{Theorem}
\newtheorem{Lemma}{Lemma}[section]
\newtheorem{Rem}{Remark}[section]
\newtheorem{proposition}{Proposition}[section]
\newtheorem{Cor}{Corollary}[section]

\newtheorem{Ex}{Example}[section]


\newcommand{\be}{\begin{equation}}
\newcommand{\ee}{\end{equation}}

\newcommand{\tr}{\mathrm{tr}\,}

\newcommand{\R}{\mathbb{R}}
\newcommand{\Id}{\operatorname{Id}}

\newcommand{\pd}[2]{\frac{\partial#1}{\partial#2}}

\newcommand{\const}{\operatorname{const}}

\newcommand{\weg}[1]{}
\title{Applications of Nijenhuis geometry II: maximal  pencils of multihamiltonian structures    of hydrodynamic type}
\author{Alexey V. Bolsinov\footnote{ School of Mathematics,
 Loughborough University,
 LE11 3TU, UK;   Faculty of Mechanics and Mathematics, Moscow State University and Moscow Center
for Fundamental and Applied Mathematics,  119992,  Moscow  Russia \ \ 
 \quad {\tt A.Bolsinov@lboro.ac.uk} } \quad
\& \quad  Andrey Yu. Konyaev\footnote{Faculty of Mechanics and Mathematics, Moscow State University, 119992, Moscow Russia
 \ \ \quad {\tt  maodzund@yandex.ru}} \quad \& \quad Vladimir S. Matveev\footnote{
Institut f\"ur Mathematik, Friedrich Schiller Universit\"at Jena,
07737 Jena Germany  \ \ \quad {\tt  vladimir.matveev@uni-jena.de}} 
}  
\date{}
\begin{document}
\maketitle

\begin{abstract} 

We connect  two a priori unrelated topics, theory of geodesically equivalent metrics in differential geometry, and theory of compatible infinite dimensional Poisson  brackets of hydrodynamic type in mathematical physics.   Namely, we prove  that a pair of geodesically equivalent metrics such that one is flat produces a pair of such brackets. We construct Casimirs for  these brackets and the corresponding commuting flows.  There are two ways to produce a large family of compatible Poisson structures from a pair of geodesically equivalent metrics one of which is flat. One of these families  is $(n+1)(n+2)/2$ dimensional; we describe it completely  and  show that it is maximal. Another  has dimension $\le n+2$ and is, in a certain sense, polynomial.
We show that a nontrivial  polynomial family of  compatible  Poisson structures of dimension $n+2$ is unique and comes from a pair of geodesically equivalent metrics.   
In addition, we generalise a result of Sinjukov (1961)  from constant curvature metrics to  arbitrary Einstein metrics.  
\end{abstract}

{\bf MSC classes:}  37K05, 37K06, 37K10,  37K25,   37K50,  53B10,  53A20,  53B20,  53B30, 53B50, 53B99,  53D17

 \tableofcontents

\section{Introduction}

This paper continues the {\it Nijenhuis Geometry} programme started in \cite{Nijenhuis1} and further developed in \cite{ Nijenhuis3, NijenhuisAppl1,Nijenhuis2}.  This programme was initially motivated by the fact that Niejnhuis operators (i.e. fields of endomorphisms $L=(L^i_j)$ with vanishing Nijenhuis torsion \cite{nij}) naturally appear in a number of different areas of geometry, algebra and mathematical physics.  For this reason their normal forms, singularities and global properties deserve more systematic study than before.  Although Nijenhuis operators usually serve as an auxiliary object  in various mathematical constructions (two of them will be discussed in the present paper), in many cases their role is crucial and we intend to demonstrate this: the reader will  notice that in many computations below we use $L$ as a primary object and this leads to essential simplifications and new results.

We  believe that the appearance of Nijenhuis operators in various, seemingly unrelated research areas, might be an evidence of a hidden relationship between them. Indeed, one may often observe similarity of ideas, techniques or clever tricks used therein.  Sometimes the relationship is much deeper and is manifested  in ``overlapping''  at the level of mathematical objects studied in these areas. In the present paper, we demonstrate such an overlap between geodesically equivalent pseudo-Riemannian metrics and compatible Poisson brackets of hydrodynamic type (see Theorem \ref{thm:main1}, \ref{thm:main2} and \ref{thm:main3} below).  Once such a relationship is established and understood, one may try to transfer insights from one area to the other.  That is what we do in Theorem \ref{thm:main4} by using our expertise in the theory of geodesically equivalent metrics to prove a uniqueness result for a certain type of Poisson pencils of hydrodynamic type.  

Also we would like to emphasise that general methods of Nijenhuis Geometry allows one to deal with singularities, i.e.,  those points where collisions of eigenvalues happen and $L$ changes its algebraic type. Traditionally, such singularities are excluded and as a consequence  most  constructions are local and restricted to a domain where $L$ reduces to some standard canonical form (for instance, is $\R$-diagonalisable with simple spectrum). In many problems, however, singularities cannot be ignored. First of all,  this  relates to global problems, when $L$ ``lives'' on a closed manifold and singularities  become unavoidable.  In this paper, we either impose no additional restrictions on $L$, or assume that $L$ is differentially non-degenerate (this condition still allows singularities).  Moreover,  we give a description of all Nijenhuis operators geodesically compatible with a flat metric. Since this description is explicit, it can be used for analysis of  singularities  these operators may have.

{\bf Acknowledgements.}  We thank Jenya Ferapontov  for his valuable comments and explanations.  
The most essential steps resulted in this paper would not have been done without outstanding research environment offered to us by Centro Internazionale per la Ricerca Matematica, Trento and the Institute of Advanced Studies, Loughborough University  where the  first results of the paper were obtained, and also Centre International de Rencontres Math\'ematiques Lumini, where  the  paper was finished and 
written. 
We are also grateful to Jena Universit\"at, in particular, Ostpartnerschaft programm for supporting our research on Nijenhuis Geometry for several years. The work of Alexey Bolsinov and Andrey Konyaev was  supported by Russian Science Foundation (project 17-11-01303).

\section{Basic definitions and main results}\label{sect2}
We start with introducing two classes of objects we will be dealing with:  Poisson structures of hydrodynamic type and geodesically equivalent metrics. 

Given  a   metric $g$ of any signature  on a manifold $M$  which is always assumed to  be 
of  dimension $n\ge 2$, for 
 a function $h$, treated as the density of a Hamiltonian of hydrodynamic type,  one can construct an operator (=\,$(1,1)$-tensor field)  by the formula
\begin{equation} \label{eq:ham}
h\mapsto \nabla^i\nabla_j h= g^{is}\tfrac{\partial^2 h}{\partial x^s \partial x^j} - \Gamma^{is}_{j} \tfrac{\partial h}{\partial x_s}  
\end{equation} (we use $g$ for index manipulations, for example $\Gamma^{is}_{j} = \Gamma^s_{pj} g^{pi}$). 

If the metric is flat,  this construction has many parallels with the construction of the  Hamiltonian vector field by  a function $H$ and a Poisson bracket. Actually, a flat metric defines an infinite-dimensional Poisson bracket,  the density $h$ defines a  variational functional of hydrodynamic type and  the operator $\nabla^i\nabla_j h$ determines the Hamiltonian flow generated by this functional. We refer to \cite{ Doyle,Dubrovin-Novikov,Gelfand-Dorfman, Mokhov_symplectic}  for details.

Parallels with  and intuition coming from finite-dimensional Poisson brackets  appeared to be  very helpful in studying the following systems of  partial differential equations: we view local 
coordinates $x^{1},\dots,x^{n}$ on $M^n$ as unknown functions which depend on two variables $t$ and $\tau$ and consider the following quasilinear systems of $n$ PDE on $n$ unknown functions of two coordinates:   
\begin{equation}\label{sys:hidro}
\frac{\partial }{\partial  t}x^i(t,\tau) = A^i_j(x) \frac{\partial  }{\partial \tau}x^j(t,\tau), \quad\mbox{where }  A^i_j = \nabla^i\nabla_j h. 
\end{equation}
Such systems are called {\it systems  of hydrodynamic type} in literature.

This construction was generalised in \cite{Ferapontov-Mokhov,Ferapontov-viniti} for metrics of constant curvature $K$: in this case   the analog of formula  \eqref{eq:ham} is 
\begin{equation} \label{eq:ham1}
h\mapsto \nabla^i\nabla_j h + K h \,\delta^i_j.    
\end{equation}
and correspondingly $A$ in \eqref{sys:hidro} is given by  $A^i_j(x) = \nabla^i\nabla_j h + K h \,\delta^i_j$. 

Recall that two Poisson structures are {\it compatible}, if their sum is also a Poisson structure. In our  infinite-dimensional 
situation,  compatibility of two Poisson structures  coming from   constant   curvature 
metrics $g$ and $\bar g$ 
is  equivalent to the following two conditions (see  e.g. the survey  \cite{Mokhov}):
 \begin{itemize}\item[(A)] 
   the operator  $L^{i}_{j}:= \bar g^{is} g_{sj}$ is a {\it Nijenhuis operator}, i.e., its Nijenhuis torsion vanishes, that is,
   $$
   \mathcal N_L(u,v) = L^2[u,v] - L[Lu, v] - L[u,Lv] + [Lu,Lv] = 0
   $$
   for arbitrary vector fields $u$ and $v$  \cite{nij}.

		\item[(B)] For any  $\alpha, \beta \in \mathbb{R}$  
		such that the operator  $\alpha  \Id + \beta L$  is {\it invertible}, 
		the metric  $\hat g= g(\alpha  \Id + \beta L)^{-1}$  has constant curvature  $\hat K = \alpha K + \beta \bar K$ where $K$ and $\bar K$  are the curvatures of $g$ and $\bar g$ respectively.
\end{itemize}

Notice that by \cite{Mokhov1,Mokhov}  condition (A)  is equivalent to the  following property: 
for any  $\alpha, \beta \in \mathbb{R}$ such that $\alpha \Id + \beta L$ is non-degenerate,  the Christoffel symbols of the  metric 
$\hat g= g(\alpha \Id + \beta L)^{-1}$  are given by $\hat \Gamma^{ij}_{k}=\alpha \,\Gamma^{ij}_{k} +   \beta \,\bar \Gamma^{ij}_k$. In the last formula, we raise the index $i$  of the Christoffel coefficients $\Gamma$ (resp.  $\bar \Gamma$,  $\hat \Gamma$) 
of the metric $g$  (resp. $\bar g,$ $\hat g$) by its  own metric.

We will call   a set of  metrics {\it Poisson-compatible}, if any two metrics $g$ and $\bar g$ from this set satisfy (A,B).

Similar to the finite-dimensional case, 
the existence of a nontrivial  compatible Poisson structure  provides additional tools to analyse the system \eqref{sys:hidro}, in particular to construct explicit solutions  and to study long-time behaviour of solutions. 
We refer to   \cite{Ferapontov-viniti,  Mokhov,Tsarev} for details.

Nijenhuis operators appeared naturally  in the theory of geodesically equivalent metrics. Recall that two metrics $g$ and $\tilde g$  on one manifold   are \emph{geodesically equivalent}, if they have the same geodesics considered as unparametized curves.  
Let us  consider the $(1,1)-$tensor $L=L(g,\tilde g)$ defined by
\begin{equation} \label{L}
L_j^i := \left(\frac{|\det(\tilde g)|}{|\det(g)|}\right)^{\frac{1}{n+1}} \tilde g^{ik}
g_{kj},\end{equation}
where $\tilde g^{ij}$ is the  (automatically, symmetric)  tensor dual to $\tilde g_{ij}$ (in the sense $\tilde g^{is}\tilde g_{js}=\delta^i_j$). 
It is known \cite{BoMa2003} that for geodesically equivalent metrics the operator $L$ is Nijenhuis.   Notice that $\tilde g$ can be recovered from $g$ and $L$ as
\begin{equation}\label{L2}
\tilde  g = \frac{1}{|\det L|} \, gL^{-1},
\end{equation}
which is equivalent to \eqref{L}.

For a given $g$, we will call an 
operator $L$ {\it geodesically compatible with $g$},  if   \eqref{L2} defines a metric $\tilde g$ geodesically equivalent to $g$.

 This condition is equivalent to the property   that $L$ is self-adjont with respect to $g$, is non-degenerate,  and   satisfies the system of PDEs \eqref{eq:L} below. Sometimes speaking about geodesic compatibility of $L$ with $g$ one allows  $L$   to be degenerate and only requires that $L$ is selfadjoint and satisfies \eqref{eq:L}. Actually since $\Id= \delta^i_j$ is geodesically 
compatible  with every $g$ and  equation \eqref{eq:L} is linear, addition of $\const\cdot   \Id  $  to $L$ makes it locally non-degenerate.  In order to avoid misunderstanding, we will always either 
explicitly  require  the operator $L$ to be non-degenerate or allow it to be degenerate.

Our first result is the following relation between geodesically equivalent  and Poisson-compatible  metrics:

\begin{Th} \label{thm:main1}  Let  $g$ be a flat metric.   Then the following statements hold:
 
\begin{enumerate} 
 \item If   $L$ is geodesically compatible with $g$ and non-degenerate, then the metric  $gL^{-1}$ has constant (possibly zero) curvature and  is Poisson-compatible with $g$.

\item  If non-degenerate  $L_1$ and $L_2$ are  geodesically compatible with $g$, 
then the metrics $gL_1^{-1}$ and $g L_2^{-1}$ are Poisson-compatible.

\end{enumerate}
  \end{Th}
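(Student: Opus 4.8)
The plan is to isolate a single lemma that drives both statements. Observe that the operators geodesically compatible with $g$ form a linear space that contains $\Id$ (since \eqref{eq:L} is linear and $\Id$ solves it), so any combination $\alpha\Id+\beta L$, and more generally $\alpha L_1+\beta L_2$, is again geodesically compatible with $g$. Moreover two elementary pencil identities hold: for the first statement the pencil through $g$ and $\bar g:=gL^{-1}$ is literally $g(\alpha\Id+\beta L)^{-1}$, while for the second, writing $M:=L_2L_1^{-1}$ for the operator attached to the pair $(gL_1^{-1},gL_2^{-1})$, one computes $\alpha\Id+\beta M=(\alpha L_1+\beta L_2)L_1^{-1}$ and hence
\[
(gL_1^{-1})(\alpha\Id+\beta M)^{-1}=g(\alpha L_1+\beta L_2)^{-1}.
\]
Thus in both cases the relevant pencil consists of metrics of the form $gS^{-1}$ with $S$ geodesically compatible with the flat metric $g$. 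Everything therefore reduces to the following core lemma, which I would state and prove first: \emph{if $g$ is flat and $S$ is geodesically compatible with $g$, then the contravariant Christoffel symbols $\Gamma^{ij}_k$ of $gS^{-1}$ depend linearly on $S$, and whenever $S$ is non-degenerate the metric $gS^{-1}$ has constant curvature depending linearly on $S$.}

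Granting the lemma, I would verify (A) and (B) as follows. For the first statement, (A) is immediate: for the pair $(g,gL^{-1})$ the operator in (A) is exactly $L$, which is Nijenhuis because it comes from the geodesically equivalent pair $(g,\tilde g)$ via \eqref{L} \cite{BoMa2003}. For the second statement, (A) asks that $M=L_2L_1^{-1}$ be Nijenhuis; by the remark following (B) this is equivalent to the Christoffel symbols of the pencil $(gL_1^{-1})(\alpha\Id+\beta M)^{-1}=g(\alpha L_1+\beta L_2)^{-1}$ being affine in $(\alpha,\beta)$, and this is precisely the linearity-in-$S$ part of the core lemma applied to $S=\alpha L_1+\beta L_2$. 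Condition (B) in both statements then follows from the constant-curvature part of the lemma applied to $S=\alpha\Id+\beta L$ (resp.\ $S=\alpha L_1+\beta L_2$), which is non-degenerate precisely under the hypothesis of (B). The curvature values claimed in (B) match because the curvature of $gS^{-1}$ is a linear functional of $S$ that vanishes on the $\Id$-direction (where $gS^{-1}$ is a constant rescaling of the flat $g$); tracking this linear dependence reproduces $\hat K=\alpha K+\beta\bar K=\beta\bar K$ in the first case and $\alpha\bar K_1+\beta\bar K_2$ in the second, where $\bar K_i$ is the constant curvature of $gL_i^{-1}$.

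The real work, and the step I expect to be the main obstacle, is the constant-curvature part of the core lemma. I would attack it by direct computation in flat coordinates, in which $g$ is constant and $\nabla=\partial$, so that \eqref{eq:L} becomes a constant-coefficient linear system; its solutions $a:=gS$ are then explicit low-degree polynomials of the coordinates (this is the picture behind the generalisation of Sinjukov's theorem announced in the abstract). Substituting this explicit $S$ into the contravariant Christoffel symbols of $gS^{-1}$ and computing the curvature tensor should yield, after simplification, the constant-curvature form $\bar R_{ijkl}=\bar K\,(\bar g_{ik}\bar g_{jl}-\bar g_{il}\bar g_{jk})$; organising this calculation so that the Nijenhuis identity for $S$ and the special algebraic shape of $\partial a$ dictated by \eqref{eq:L} keep it tractable is the crux. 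A conceptual cross-check runs through Beltrami's theorem: the \emph{normalised} metric $\tilde g_S=\tfrac{1}{|\det S|}gS^{-1}$ of \eqref{L2} is geodesically equivalent to the flat $g$, hence projectively flat, hence of constant curvature; the subtle point is that $gS^{-1}$ differs from $\tilde g_S$ by the conformal factor $|\det S|$, and one must check that this particular factor sends a constant-curvature metric to another one (the sign of the curvature may even change). This conformal bookkeeping is essentially equivalent in difficulty to the direct computation, so for a self-contained argument I would carry out the flat-coordinate calculation and use the Beltrami picture only as a guide.
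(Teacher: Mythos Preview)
Your plan is sound, and the overall architecture --- reduce both statements to a single ``core lemma'' about $gS^{-1}$ as $S$ ranges over the linear space of geodesically compatible operators --- is the same as the paper's; but at each of the three key steps you choose a different route. For condition (A) in statement 2, the paper proves directly that $L_1L_2^{-1}$ is Nijenhuis via an algebraic identity in Nijenhuis geometry (Lemma~\ref{almost}: for non-degenerate Nijenhuis $L_1,L_2$, the vanishing of $[[L_1,L_2]]$ is equivalent to $\mathcal N_{L_1L_2^{-1}}=0$), whereas you invoke the Mokhov equivalence and the linearity of the contravariant Christoffel symbols in $S$; the latter is immediate from the relation $\bar\Gamma^i_{jk}=\Gamma^i_{jk}-\lambda^i\bar g_{jk}$ (Lemma~\ref{Lem:0}), so your route is shorter, though Lemma~\ref{almost} has independent interest. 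For the constant-curvature step, the paper argues conceptually (Lemma~\ref{Lem:1b}): the $n{+}1$ affine functions in flat coordinates all solve the trace-free Hessian equation \eqref{eq:tashiro} for $\bar g$, and an $(n{+}1)$-dimensional solution space forces constant curvature; this works uniformly for $n\ge 2$ and avoids any explicit curvature calculation. Your flat-coordinate proposal amounts to proving Theorem~\ref{thm:main3} first --- and indeed the integrability of \eqref{eq:geodcomp} alone forces $\partial_j\lambda^i=-K\delta^i_j$ with constant $K$, giving the quadratic form \eqref{norm} without any curvature input --- and then checking directly that \eqref{norm} has constant curvature $K$; this is more computational but makes the linear dependence of $K$ on $S$ transparent, while the paper extracts that linearity separately from the curvature identity \eqref{magic} (Lemma~\ref{bolsadd1}). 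Your Beltrami alternative, as you correctly note, leaves the conformal passage from $\tilde g_S$ to $gS^{-1}$ as the residual difficulty, and that is precisely the content of Sinjukov's 1961 result in dimension $n\ge 3$, so it is not a genuine shortcut.
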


The first statement of this theorem means that the metrics $g$ and $\bar g=gL^{-1}$  define a family (pencil) of compatible Poisson brackets of hydrodynamic type, which can be used to construct integrable systems of quasilinear PDEs.  One of such constructions is based on Casimir functions of the relevant Poisson structures.  In the context of this paper,  by a Casimir of the Poisson structure of hydrodynamic type (related to a metric $g$ with constant curvature $K$, see Remark \ref{rem:convention} for the definition of $K$)  we understand a function  $h:M\to \mathbb{R}$ which, if plugged into \eqref{eq:ham1}, produces the  zero operator:
$$
\nabla^i  \nabla_j h + K h \,\delta^i_j = 0.
$$
Since the property of $h$ to be a Casimir  is completely defined in terms of $g$, for the sake of brevity,  we will refer to it as {\it a Casimir of $g$}.   
 If $K=0$ and $x^1,\dots, x^n$ are flat coordinates for $g$, then the Casimirs are just linear combinations of the form  $a_0 + a_1x^1 + \dots + a_nx^n$,  $a_i\in\R$.    If $K\ne 0$,  the Casimirs still form a vector space of dimension $n+1$ that admits a simple explicit description as soon as $g$ is given by means of a certain {\it canonical} model. For instance, if $g$ is the standard metric on the round sphere $S^n \subset  \R^{n+1}$, then the Casimirs of $g$ are just restrictions of linear functions from $\R^{n+1}$ to $S^n$.  
 
Let  $g$ and $\bar g$ be two Poisson-compatible metrics and 
$h_\alpha$ be a Casimir of an arbitrary metric $g_\alpha=(g^{-1} - \alpha \bar g^{-1})^{-1}$, $\alpha \in \R$  from the corresponding pencil.   It is a well known fact that the Hamiltonian flows generated by  $h_\alpha$'s w.r.t. the Poisson bracket related to $g$ commute and this property leads to their integrability.  However, if $h_\alpha$ is a common Casimir of   $g_\alpha$ and $g$, then the corresponding flow vanishes and the above construction becomes trivial.  For this reason it makes sense to distinguish {\it essential} Casimirs $h_\alpha$.  The next theorem describes Casimirs and the corresponding commuting Hamiltonian flows for the pencil of Poisson-compatible metrics from Theorem~\ref{thm:main1}.

\begin{Th}\label{thm:main2}
Let  $g$ be a flat metric and $L$ be  a non-degenerate geodesically compatible operator. Then, the following statements hold:  

\begin{enumerate}

\item The metrics $g$ and $\bar g=gL^{-1}$ have $n$ common independent  Casimirs.

\item For any $\alpha\in \mathbb{R}$  such that $\det(-\alpha L + \Id)> 0$, the function $h_\alpha=\sqrt{\det(-\alpha L + \Id)} $ is  a Casimir of the metric $g(-\alpha L + \Id)^{-1}$.   

\item The Hamiltonian flow generated by $h_\alpha$ w.r.t. the Poisson bracket related to $g$ is given (up to a factor depending on $\alpha$) by the operator    
\begin{equation} 
\label{eq:flow} 
 A_\alpha = \frac{1}{\sqrt{\det(-\alpha L + \Id)}}\left( -\alpha L + \Id\right)^{-1}.\end{equation}
 The flows given by $A_\alpha$ and $A_\beta$ commute for all $\alpha,\beta\in \R$.
 \end{enumerate}
\end{Th}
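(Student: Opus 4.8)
The computations are cleanest in flat coordinates $x^1,\dots,x^n$ of $g$, where $\nabla=\partial$ and $g_{ij}=\const$, and the plan is to use as the sole analytic input the defining PDE \eqref{eq:L} of geodesic compatibility, which in these coordinates reads $\partial_k L^i_j=\lambda^i g_{jk}+\delta^i_k\lambda_j$ with $\lambda_i=\tfrac12\partial_i\tr L$ and $\lambda^i=g^{is}\lambda_s$. The first and decisive step is to exploit flatness: differentiating this relation once more and using the symmetry of second partials (equivalently $R_g=0$) forces $\partial_i\lambda_j=c\,g_{ij}$ for a single constant $c$. Thus $\lambda$ is affine and $L$ is quadratic in $x$; this rigidity is exactly what the flatness of $g$ buys, and everything below is bookkeeping built on these two identities.

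For statements (2) and (3) I would set $L_\alpha:=\Id-\alpha L$ and compute the derivatives of $h_\alpha=\sqrt{\det L_\alpha}$. Logarithmic differentiation together with the self-adjointness of $L$ gives $\partial_j h_\alpha=-\alpha\,h_\alpha\,\mu_j$ with $\mu_j:=(L_\alpha^{-1})^s_j\lambda_s$. Differentiating again and substituting the two identities from Step~1, the terms quadratic in $\mu$ cancel and one is left with
\[
\nabla_i\nabla_j h_\alpha=-\alpha(\alpha P+c)\,h_\alpha\,(L_\alpha^{-1})_{ij},\qquad P:=\lambda^s\mu_s .
\]
The key lemma is that the scalar $(\alpha P+c)\det L_\alpha$ is independent of $x$: a direct check gives $\partial_k P=2(\alpha P+c)\mu_k$ and $\partial_k\det L_\alpha=-2\alpha\mu_k\det L_\alpha$, so $\partial_k\!\left[(\alpha P+c)\det L_\alpha\right]=0$. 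Writing the resulting constant as $C(\alpha)$ and raising an index yields $\nabla^i\nabla_j h_\alpha=-\alpha C(\alpha)\,h_\alpha^{-1}(L_\alpha^{-1})^i_j$. Since $g$ is flat, the Hamiltonian flow of $h_\alpha$ for the bracket of $g$ is exactly the operator $\nabla^i\nabla_j h_\alpha$, so this proves formula \eqref{eq:flow} up to the scalar factor $-\alpha C(\alpha)$, which is the explicit part of (3).

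For the Casimir property (2) I would avoid the full connection of $g_\alpha$ by using that contravariant Hessians add linearly along the pencil: from the linear law for Christoffel symbols in condition (A) and the affine relation $g_\alpha^{-1}=g^{-1}-\alpha\bar g^{-1}$ one gets $\nabla^{(\alpha)i}\nabla^{(\alpha)}_j h=\nabla^i\nabla_j h-\alpha\,\bar\nabla^i\bar\nabla_j h$ for every function $h$. It then remains to compute the $\bar g$-Hessian, for which I would first establish the tidy formula $\bar\Gamma^k_{ij}=-\lambda^k(L^{-1})_{ij}$ for $\bar g=gL^{-1}$, obtained by combining the projective relation between $\nabla$ and the Levi-Civita connection of the genuine geodesic partner $\tilde g=|\det L|^{-1}gL^{-1}$ with the conformal factor $|\det L|$. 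Plugging in $h_\alpha$ and simplifying with $\alpha L\,L_\alpha^{-1}=L_\alpha^{-1}-\Id$, all $P$-dependence again cancels and one obtains $\nabla^{(\alpha)i}\nabla^{(\alpha)}_j h_\alpha=-\alpha c\,h_\alpha\,\delta^i_j$. This is precisely the Casimir equation for $g_\alpha$ with curvature $K_\alpha=\alpha c$, and comparison with the value $K_\alpha=-\alpha\bar K$ from Theorem~\ref{thm:main1} identifies $\bar K=-c$.

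For statement (1), a common Casimir is a flat-Casimir, hence an affine function $h=a_0+a_mx^m$, that in addition satisfies the Casimir equation of $\bar g$. Using $\bar\Gamma^k_{ij}=-\lambda^k(L^{-1})_{ij}$ one finds $\bar\nabla^i\bar\nabla_j h=(\lambda^l a_l)\delta^i_j$ for affine $h$, so with $\bar K=-c$ and $\lambda_i=c\,g_{is}x^s+\lambda^{(0)}_i$ the equation collapses to the single linear constraint $\lambda^{(0)l}a_l-c\,a_0=0$ on $(a_0,\dots,a_n)\in\R^{n+1}$; provided $L$ is not parallel this constraint is nontrivial and cuts out an $n$-dimensional space, giving the $n$ common Casimirs. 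Finally, the commutativity of the flows in (3) I would deduce from the general fact quoted in the text that Hamiltonian flows generated by Casimirs of a compatible pencil commute; concretely each $A_\alpha$ is a scalar multiple of $(\Id-\alpha L)^{-1}$, i.e.\ a function of the single Nijenhuis operator $L$, which is the structural reason they commute. The main obstacle throughout is the second-order bookkeeping — in particular the constancy lemma for $(\alpha P+c)\det L_\alpha$ and the derivation of $\bar\Gamma^k_{ij}$ — since it is there that the Sinjukov identity and the flatness of $g$ must be combined correctly.
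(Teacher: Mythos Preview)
Your argument is correct and complete; the computations you outline all go through (I checked the key ones: the formula $\partial_i\mu_j=(\alpha P+c)(L_\alpha^{-1})_{ij}+\alpha\mu_i\mu_j$, the constancy of $(\alpha P+c)\det L_\alpha$, and the collapse of $\nabla^{(\alpha)i}\nabla^{(\alpha)}_j h_\alpha$ to $-\alpha c\,h_\alpha\delta^i_j$).  Your route, however, differs from the paper's in several respects worth noting.  The paper argues more covariantly: for item~(1) it starts from the $\bar g$-Casimir equation, differentiates once, and uses the Nijenhuis identity $L^*\mathrm{d}(\det L)=2\det L\,\mathrm{d}\lambda$ to integrate $\mathrm{d}\rho=-\rho(L^{-1})^*\mathrm{d}\lambda$ to $\rho=c_h/\sqrt{\det L}$, so that common Casimirs are cut out by $c_h=0$; you take the dual view, starting from affine $g$-Casimirs and reducing the $\bar g$-Casimir condition to a single linear relation on the coefficients.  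For item~(2) the paper verifies directly that $\sqrt{\det L}$ solves the rewritten Casimir equation \eqref{Casforgbar20} for $\bar g$, using the same Nijenhuis identity and Lemma~\ref{bolsadd1}, and then specialises $L\mapsto -\alpha L+\Id$; you instead exploit the additivity $\nabla^{(\alpha)i}\nabla^{(\alpha)}_j=\nabla^i\nabla_j-\alpha\,\bar\nabla^i\bar\nabla_j$ of contravariant Hessians along the pencil, which is a clean structural observation.  Your constancy lemma for $(\alpha P+c)\det L_\alpha$ is the flat-coordinate avatar of the paper's formula $\rho=c_h/\sqrt{\det L}$.  What your approach buys is transparency and self-containment---everything reduces to two identities in flat coordinates---at the cost of being tied to those coordinates; the paper's covariant argument makes the role of the Nijenhuis property of $L$ more visible and yields the relation $\bar\Gamma^i_{jk}=\Gamma^i_{jk}-\lambda^i\bar g_{jk}$ (your $\bar\Gamma^k_{ij}=-\lambda^k(L^{-1})_{ij}$) directly from the Sinjukov equation rather than via the projective/conformal detour you sketch.
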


\begin{Rem}\label{rem:0}{\rm
Instead of commuting flows given by \eqref{eq:flow} and parametrised by $\alpha\in\R$, one usually considers the coefficients of the expansion of  \eqref{eq:flow} in powers of $\alpha$:
$$
\frac{1}{\sqrt{\det(-\alpha L + \Id)}}\left( -\alpha L + \Id\right)^{-1} = \Id + \alpha A_1 + \alpha^2 A_2 + \alpha_3 A_3 + \dots,
$$
with
$$
\begin{aligned}
A_1 &= L + \frac{1}{2} \sigma_1 \Id, \\
A_2 &= L^2 + \frac{1}{2} \sigma_1 L + \left( \frac{1}{2} \sigma_2 + \frac{3}{8}\sigma_1\right)\Id,   \\
A_3 &= L^3 + \frac{1}{2} \sigma_1 L^2 + \left( \frac{1}{2} \sigma_2 + \frac{3}{8}\sigma_1\right) L + 
\left(\frac{1}{2}\sigma_3 + \frac{3}{4} \sigma_1\sigma_2 + \frac{5}{16}\sigma_1^3\right)\Id, \\
& \dots 
\end{aligned}
$$  
where $\sigma_k$'s denote the coefficients of the characteristic polynomial $\det (\Id - \alpha L) = 1 -\sigma_1 \alpha -\dots -\sigma_n\alpha^n$.  The flows generated by these operators commute and hence define an integrable system of PDEs.  They admit an infinite series of conservation laws given by the Casimir functions $h_\alpha = \sqrt{\det(-\alpha L + \Id)}$  or, equivalently, by the coefficients of the $\alpha$-power expansion 
$h_\alpha = 1 + \alpha F_1 + \alpha^2 F_2 + \dots$:
$$
F_1 = -\frac{1}{2} \sigma_1, \quad F_2 = -\frac{1}{2} \sigma_2 -\frac{1}{8} \sigma_1^2 , \quad F_3 = -\frac{1}{2}\sigma_3 -\frac{1}{4}\sigma_1\sigma_2 - \frac{1}{16}\sigma_1^3, \quad \dots
$$

Notice that the commuting flows and their conservation laws are defined in terms of a Nijenhuis operator $L$ only, whereas the metric $g$ in not involved in the final conclusion.  In fact, this integrable system is a particular example of the so-called integrable $\varepsilon$-systems introduced and studied by M.\,Pavlov in \cite{Pavlov}  for $\R$-diagonalisable operators $L$.  These systems, in turn, are a particular case of a nice construction developed by P.\,Lorenzoni and F.\,Magri in \cite{mag2} and based exclusively on a Nijenhuis operator $L$.  No other ingredient is needed so that 
the system itself and its integrability can naturally be understood in the framework of Nijenhuis Geometry. 
}\end{Rem}


If we allow $L$ to be degenerate,  then the operators geodesically compatible with $g$ form a vector space $\mathcal L_g$.  Since all these operators $L$ are Nijenhuis, we may refer to $\mathcal L_g$ as a {\it Nijenhuis} pencil.  According to the second statement of  Theorem  \ref{thm:main1} the pencil $\mathcal L_g$,  in the case of a flat metric  $g$, automatically leads to a {\it large} family of Poisson-compatible constant curvature metrics of the form $gL^{-1}$  (it is  more appropriate here  to consider contravariant metrics; they are given by 
the matrices $Lg^{-1}= L^{ij}$). The following theorem provides an explicit description for them.
 
 \begin{Th}\label{thm:main3}
  Let $g$ be a flat metric and $x^1,\dots,x^n$ be local coordinates in which all the components of $g$ are constant (i.e., local flat coordinates). 

Then every operator $L$ geodesically compatible to $g$ with the index raised by $g$  is given by the following formula:  
    \begin{equation}\label{norm}
        L^{ij} = a^{ij} + b^i x^j + b^j x^i - K x^i x^j.
    \end{equation}
		Here $(a^{ij}, b^i, K)$ are constants  and  $a^{ij}=a^{ji}$.  Conversely, every $L$ given by \eqref{norm}  is geodesically compatible 
	to  $g$ and the components of $\tfrac{1}{2} \operatorname{grad}_g (\tr L)$ are $\lambda^i=b^i- Kx^i$.

Moreover, near those points where $L^{ij}$ is non-degenerate and, therefore, defines a pseudo-Riemannian contravariant  metric, the curvature of this metric is constant and equals $K$. 		
    \end{Th}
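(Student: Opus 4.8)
The plan is to work entirely in the flat coordinates $x^1,\dots,x^n$, in which $g_{ij}$ is constant and the Levi-Civita connection of $g$ vanishes, so that $\nabla_k$ acting on tensor components is simply $\partial_k=\partial/\partial x^k$. In these coordinates the defining equation \eqref{eq:L} of geodesic compatibility, written for the contravariant components $L^{ij}=g^{js}L^i_s$ and the vector field $\lambda^i=\tfrac12(\operatorname{grad}_g\tr L)^i$, becomes the constant-coefficient system
\begin{equation}\label{plan:star}
\partial_k L^{ij}=\lambda^i\delta^j_k+\lambda^j\delta^i_k ,
\end{equation}
together with the self-adjointness requirement $L^{ij}=L^{ji}$. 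Everything then reduces to integrating \eqref{plan:star}. The converse direction of parts (1)--(2) is then an immediate substitution: differentiating the right-hand side of \eqref{norm} reproduces \eqref{plan:star} with $\lambda^i=b^i-Kx^i$, which also reads off the claimed expression for $\tfrac12\operatorname{grad}_g(\tr L)$.

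First I would pin down $\lambda$ itself. Differentiating \eqref{plan:star} once more and using $\partial_l\partial_k=\partial_k\partial_l$ gives the integrability identity $\partial_l\lambda^i\,\delta^j_k+\partial_l\lambda^j\,\delta^i_k=\partial_k\lambda^i\,\delta^j_l+\partial_k\lambda^j\,\delta^i_l$; contracting $k=j$ reduces this to $\partial_l\lambda^i=c\,\delta^i_l$ for a single scalar $c$, and a second differentiation, valid since $n\ge 2$, forces $c$ to be a constant, which I name $-K$. Hence $\lambda^i=b^i-Kx^i$ with $b^i$ constant. Substituting this back into \eqref{plan:star} and integrating yields $L^{ij}=a^{ij}+b^ix^j+b^jx^i-Kx^ix^j$, the constant of integration $a^{ij}$ being forced to be symmetric by $L^{ij}=L^{ji}$. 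This settles parts (1) and (2).

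For the curvature statement I would compute the Levi-Civita connection of the metric whose contravariant components are $L^{ij}$ directly in the flat chart; this metric is $\bar g=gL^{-1}$, i.e. $\bar g^{ij}=L^{ij}$. Writing $\bar\lambda_i=\bar g_{is}\lambda^s$ and using \eqref{plan:star} to obtain $\partial_k\bar g_{ij}=-\bar\lambda_i\bar g_{jk}-\bar\lambda_j\bar g_{ik}$, the Koszul formula telescopes to the remarkably clean expression
\begin{equation}\label{plan:christ}
\bar\Gamma^i_{jk}=-\lambda^i\bar g_{jk}.
\end{equation}
Feeding \eqref{plan:christ} into $\bar R^i_{jkl}=\partial_k\bar\Gamma^i_{lj}-\partial_l\bar\Gamma^i_{kj}+\bar\Gamma^i_{ks}\bar\Gamma^s_{lj}-\bar\Gamma^i_{ls}\bar\Gamma^s_{kj}$, together with $\partial_k\lambda^i=-K\delta^i_k$, I expect all terms quadratic in $\lambda$ to cancel pairwise, leaving $\bar R^i_{jkl}=K(\delta^i_k\bar g_{jl}-\delta^i_l\bar g_{jk})$, i.e. constant curvature exactly $K$.

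The main obstacle, and the point where care is essential, is a conformal subtlety: by \eqref{L2} the metric geodesically equivalent to $g$ is $\tilde g=\tfrac1{|\det L|}gL^{-1}$, which differs from $\bar g=gL^{-1}$ by the non-constant factor $|\det L|$. Hence $\bar g$ is in general \emph{not} geodesically equivalent to $g$, so one may not use the projective normal form $\delta^i_j\psi_k+\delta^i_k\psi_j$ for $\bar\Gamma$; the connection of $\bar g$ must be computed on its own, which is precisely why \eqref{plan:christ} (and not the projective form) is the right intermediate object. A secondary bookkeeping hazard is the consistent use of $\bar g$ versus $g$ when raising and lowering the indices of $\lambda$. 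As a cross-check, note that once parts (1)--(2) are established, Theorem~\ref{thm:main1} already guarantees that $\bar g$ has constant curvature, so it would suffice to evaluate a single curvature invariant; but the direct route through \eqref{plan:christ} reproves constancy and simultaneously identifies the value as $K$, so I would carry it out that way.
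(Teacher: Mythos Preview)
Your argument is correct, and in fact more self-contained than the paper's. The key difference is how $\partial_l\lambda^i=-K\delta^i_l$ is obtained. The paper invokes Lemma~\ref{bolsadd1}, whose proof rests on the algebraic curvature identity \eqref{magic} from \cite{BolsKioMatv} together with the prior result (Lemma~\ref{Lem:1b}) that $gL^{-1}$ has constant curvature; the constant $K$ is thus \emph{identified} from the outset as the curvature of $gL^{-1}$. You instead extract $\partial_l\lambda^i=c\,\delta^i_l$ with $c$ constant directly from the integrability condition $\partial_l\partial_k L^{ij}=\partial_k\partial_l L^{ij}$ of \eqref{eq:geodcomp}, needing nothing beyond $n\ge 2$. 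For the curvature assertion the paper simply cites Lemma~\ref{bolsadd1} again, whereas you compute $\bar R^i{}_{jkl}$ by hand from $\bar\Gamma^i_{jk}=-\lambda^i\bar g_{jk}$ (this is exactly the paper's formula \eqref{eq:Gamma} specialised to $\Gamma=0$); your anticipated cancellation of the $\lambda$-quadratic terms does occur, leaving $K(\delta^i_k\bar g_{jl}-\delta^i_l\bar g_{jk})$ as claimed. Your route avoids the external input \eqref{magic} and the Tashiro-type argument behind Lemma~\ref{Lem:1b}, at the cost of reproving the constant-curvature statement rather than importing it; the paper's packaging into Lemmas~\ref{Lem:0}, \ref{Lem:1b}, \ref{bolsadd1} pays off because those lemmas are reused elsewhere (e.g.\ in Theorems~\ref{thm:main1} and~\ref{thm:main2}). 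Your discussion of the $\bar g$ versus $\tilde g$ distinction is apt and matches the paper's viewpoint.
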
 

Following  \cite{Mokhov,Mokhov1}, we call a set of metrics {\it compatible},  if for any two of them, say $g$ and $\bar g$,  the operator  $L^{i}_{j}:= \bar g^{is} g_{sj}$ is {\it Nijenhuis} (i.e., condition (A) from the definition of Poisson compatibility is fulfilled) and in addition  for any $\alpha, \beta \in \mathbb{R}$  such that the operator  $\alpha  \Id + \beta L$  is non-degenerate, the curvature tensors  of $g$, $\bar g$  and $\hat g= g(\alpha  \Id + \beta L)^{-1}$  satisfy the relation 
\begin{equation} 
\label{eq:additivitycurvature} 
{ {\hat R^{ij}}}_{\  \ k\ell}  = \alpha  {R^{ij}}_{k\ell}  + \beta {{\bar R^{ij}}}_{\  \ k\ell}.
\end{equation} 
		(In each curvature tensor we raised the index by its own metric).
		
Comparing this definition with the definition of Poisson-compatibility  we see that Poisson compatible metrics are precisely compatible metrics of constant curvature.  

\begin{Cor}  \label{cor:compatible} 
Let $g=(g_{ij})$ be a flat metric on $\mathbb{R}^n$  and $L$ an operator geodesically compatible with $g$. 
Then for any polynomials $P$ and $Q$, the metrics $gP(L)^{-1}$ and $gQ(L)^{-1}$ are compatible (whenever the operators $P(L)$ and $Q(L)$ are invertible). In particular, if the metrics $g,gL^{-1}, \dots, gL^{-k}$ are of  constant curvature (resp. flat),  then 
$\{ P(L)g^{-1},  \ \deg P\le k\}$ is a pencil of Poisson-compatible (resp. flat) contravariant metrics.
\end{Cor}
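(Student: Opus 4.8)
The plan is to check, for each pair, the two conditions entering the definition of compatibility given just before the statement --- that the transition operator is Nijenhuis (condition (A)) and that the curvature additivity \eqref{eq:additivitycurvature} holds --- and then to extract the ``in particular'' part from the resulting linearity of the curvature. Throughout I work with the contravariant metrics $\mathbf G_1^{-1}=P(L)g^{-1}$ and $\mathbf G_2^{-1}=Q(L)g^{-1}$. A one-line index computation gives that the transition operator $(\mathbf G_2^{-1})^{is}(\mathbf G_1)_{sj}$ equals $Q(L)P(L)^{-1}$, i.e. the rational function $R(L)$ with $R=Q/P$, well defined wherever $P(L)$ is invertible.

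Condition (A) I would dispose of first. Since $L$ is geodesically compatible with $g$ it is Nijenhuis, and a standard fact of Nijenhuis geometry used throughout this series is that any function of a Nijenhuis operator is again Nijenhuis; in particular $Q(L)P(L)^{-1}$ is Nijenhuis, so (A) holds for every pair. The real work is the curvature additivity \eqref{eq:additivitycurvature}. I would parametrise the pencil joining the two metrics by $S_\tau(L)g^{-1}$ with $S_\tau=\alpha P+\beta Q$, so that the whole pencil stays inside the vector space $V=\{\,S(L)g^{-1}\,\}$. Because the transition operator is Nijenhuis, the reformulation of (A) in terms of Christoffel symbols recalled after condition (B) applies and shows that the contravariant Christoffel symbols are \emph{additive} along this pencil, i.e. affine in $(\alpha,\beta)$. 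The tensor $R^{ij}{}_{kl}=g^{im}g^{jn}R_{mnkl}$ is quadratic in the connection, so along the pencil it is a priori only quadratic, and additivity is exactly the vanishing of its purely quadratic part.

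To kill that quadratic term I would pass to local flat coordinates of $g$ and use the normal form of Theorem~\ref{thm:main3}: there $\partial_k L^{ij}=\lambda^i\delta^j_k+\lambda^j\delta^i_k$ with $\lambda^i=b^i-Kx^i$, hence $\partial_k\lambda^i=-K\delta^i_k$. Substituting $S(L)g^{-1}$ into the curvature formula and collecting powers of the pencil parameter, the quadratic contribution becomes a contraction built solely out of $\lambda$, $L$ and $g$, and the two displayed structure relations force it to cancel. This coordinate computation is the main obstacle: everything preceding it is formal and uses only that $L$ is Nijenhuis, whereas here one must genuinely exploit that $L$ is geodesically compatible (and that $g$ is flat), not merely Nijenhuis. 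The outcome is that the map $\mathcal R\colon S\mapsto R[S(L)g^{-1}]$ is linear on $V$, which is the assertion that the metrics $gP(L)^{-1}$ and $gQ(L)^{-1}$ are compatible.

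Finally I would read off the ``in particular'' statement. The first part makes the $(k+1)$-dimensional family $\{\,P(L)g^{-1}:\deg P\le k\,\}$ pairwise compatible, and pairwise additivity upgrades to full linearity of $\mathcal R$ on the span by applying it to the intermediate partial sums, which are invertible on an open dense set where $\mathcal R$ depends continuously on the coefficients. Writing $P(L)g^{-1}=\sum_{m=0}^{k}c_m L^m g^{-1}$ and noting $L^m g^{-1}=(gL^{-m})^{-1}$, the hypothesis that $g,gL^{-1},\dots,gL^{-k}$ have constant curvatures $K_m$ gives $R[L^m g^{-1}]=K_m(\delta^i_k\delta^j_l-\delta^i_l\delta^j_k)$, whence by linearity $R[P(L)g^{-1}]=\bigl(\sum_m c_m K_m\bigr)(\delta^i_k\delta^j_l-\delta^i_l\delta^j_k)$; thus every $P(L)g^{-1}$ has constant curvature. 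Since a compatible family of constant-curvature metrics is precisely a Poisson-compatible family, the pencil is Poisson-compatible, and in the flat case all $K_m=0$ so the whole pencil is flat.
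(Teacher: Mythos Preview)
Your handling of condition (A) matches the paper's, but your route to the curvature--additivity condition \eqref{eq:additivitycurvature} has a genuine gap. You claim that $\hat R^{ij}{}_{kl}$ is ``a priori only quadratic'' in the pencil parameter because the contravariant Christoffel symbols $\hat\Gamma^{ij}_k$ are affine in $(\alpha,\beta)$. This is not justified: when one expresses $\hat R^{ij}{}_{kl}$ in terms of $\hat\Gamma^{ij}_k$, the quadratic-in-$\hat\Gamma$ part has four upper and two lower indices and must be contracted once with the \emph{covariant} metric $\hat g_{ij}$, which is the inverse of something linear and hence only rational in $(\alpha,\beta)$. So there is no well-defined ``purely quadratic part'' to kill; the dependence is a priori rational. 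This is not a technicality: as the paper notes (see the footnote in the proof of Corollary~\ref{cor:compatible}), curvature additivity follows from condition (A) automatically only when the transition operator has simple spectrum; with multiple eigenvalues --- which the Corollary expressly allows --- it is a genuinely extra condition. Even setting aside the framing, your ``coordinate computation'' for general polynomials $P$ and $Q$ is merely asserted: the relations $\partial_k L^{ij}=\lambda^i\delta^j_k+\lambda^j\delta^i_k$ and $\partial_k\lambda^i=-K\delta^i_k$ do not yield manageable formulas for the Christoffel symbols or curvature of $gP(L)^{-1}$ once $\deg P\ge 2$, and you give no mechanism for the required cancellation.

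The paper takes an entirely different route and avoids any such computation. In the flat coordinates of Theorem~\ref{thm:main3}, all the ingredients --- $g$, $L$, hence $P(L)$, $Q(L)$, their inverses, and the curvatures of $gP(L)^{-1}$, $gQ(L)^{-1}$, $g(\alpha P(L)+\beta Q(L))^{-1}$ --- are rational functions of the finitely many variables $(x^i,\,g^{ij},\,a^{ij},\,b^i,\,K)$. The identity \eqref{eq:additivitycurvature} is therefore a system of algebraic relations in these variables, so it holds identically as soon as it holds on a non-empty open set. The paper then invokes the Mokhov result that condition (A) alone forces curvature additivity whenever $L$ has $n$ distinct real eigenvalues (cf.\ Remark~\ref{rem:last}), and uses the explicit example~\eqref{ex:LC} to exhibit such an open set of parameter values. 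This algebraic-continuation argument is what replaces your proposed direct computation. Your final paragraph deriving the ``in particular'' statement from linearity of the curvature on the pencil is fine once the first part is in place.
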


Notice that every set  $\{ g_0, g_1, g_2, g_3, \dots \}$ of Poisson-compatible metrics (or equivalently the corresponding pencil of compatible Poisson structures of hydrodynamic type) leads to a natural Nijenhuis pencil which is the linear span of all Nijenhuis operators of the form  $L_k = g^{-1}_k g_0$, $k=0, 1,\dots$.   In a very similar way,  Nijenhuis pencils appear in the area of geodesically  equivalent metrics  and
 finite-dimensional multi-Hamiltonian structures and already for this reason they deserve to be studied as a separate subject in geometry.  For instance, it would be interesting to describe {\it maximal} Nijenhuis pencils, i.e., those which are not contained in any larger Nijenhuis pencil.   Notice that maximality of a Nijenhuis pencil would immediately imply maximality of the corresponding Poisson pencils and families of geodesically equivalent metrics (the converse, as a rule,  is not true).

In this context, the pencil from Theorem \ref{thm:main3} admits the following interpretation.  Consider an Euclidean metric $g\simeq \sum\mathrm{d} (x^i)^2$ and try to construct a big family of Poisson-compatible metrics containing $g$.  Let us start with the trivial family of metrics whose components are constant in coordinates $x^1,\dots, x^n$.  The corresponding Nijenhuis pencil consists of symmetric operators with constant entries.  Can this pencil be extended?  And if yes, then how?  The answer is that such an extension exists, is unique in the class of $g$-symmetric operators and coincides with the pencil from Theorem \ref{thm:main3}  (Corollary \ref{cor:bols_1}).
In particular, the family of Poisson compatible metrics from Theorem \ref{thm:main3}     is maximal (Corollary \ref{cor:bols_2}). 

Theorem \ref{thm:main3} combined with Theorem \ref{thm:main1}  gives us many examples of Poisson-compatible metrics and, consequently, integrable systems of hydrodynamic type. We discuss  these examples in Section \ref{sec:proof of theorem einstein}. Some of these examples has features which were not observed in integrable systems of hydrodynamic type before. In particular, we construct nontrivial  examples with Jordan blocks corresponding to nonconstant eigenvalues.

Corollary \ref{cor:compatible}  suggests one more method for constructing multidimensional pencils of Poisson brackets of hydrodynamic type. Let us start with a flat metric $g$ and an operator $L$ geodesically compatible to it.  We know that $g$ and $gL^{-1}$ are Poisson-compatible (and hence generate a pencil of dimension 2).    By \cite[Theorem 1]{Sinjukov1} (we reprove it in  Corollary \ref{cor:1}) $gL^{-1}$ and $L$ are still geodesically compatible. Hence if $gL^{-1}$ is flat, we can repeat this procedure and consider the metric $gL^{-2}$ which will be Poisson-compatible with $g$ and  $gL^{-1}$ by Corollary \ref{cor:compatible}.  If this metric is still flat, then we can make one more step and so on.  If $gL^{-k}$ is flat but  $gL^{-k-1}$ has non-zero constant curvature, then we show that  the process stops as the next metric $gL^{-k-2}$ does not  have constant curvature:

\begin{Th} \label{thm:einstein}
Let $L$ be geodesically compatible to an Einstein metric $g$ of constant non-zero  scalar curvature and non-degenerate. 
If $\mathrm{d}\operatorname{tr}(L)\ne 0$, then the metric $gL^{-1}$ is not an Einstein metric of constant scalar curvature. 
\end{Th}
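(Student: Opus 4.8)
The plan is to recast everything in the covariant language of the geodesic compatibility equation \eqref{eq:L} and to compute $\operatorname{Ric}(\bar g)$ for $\bar g=gL^{-1}$ directly, using only that $g$ is Einstein. Writing the self-adjoint operator with both indices down, $L_{ab}=g_{ac}L^c_b$, equation \eqref{eq:L} reads $\nabla_c L_{ab}=\lambda_a g_{bc}+\lambda_b g_{ac}$ with $\lambda_a=\tfrac12\partial_a\operatorname{tr}L$ and $\nabla$ the Levi-Civita connection of $g$; equivalently $\nabla_c L^a_b=\lambda^a g_{bc}+\lambda_b\delta^a_c$. First I would differentiate $\bar g=gL^{-1}$ using \eqref{L2} and substitute \eqref{eq:L}; a short computation gives $\nabla_c\bar g_{ij}=-\bar\lambda_i\bar g_{cj}-\bar\lambda_j\bar g_{ic}$ with $\bar\lambda_i=(L^{-1})^b_i\lambda_b$, and hence the difference of the two Levi-Civita connections is the remarkably simple tensor $S^k_{ij}:=\bar\Gamma^k_{ij}-\Gamma^k_{ij}=-\lambda^k\bar g_{ij}$ (the raising $\bar g^{kl}\bar\lambda_l$ collapses to the ordinary $\lambda^k=g^{kl}\lambda_l$).

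The decisive observation is that feeding $S^k_{ij}=-\lambda^k\bar g_{ij}$ into the standard formula expressing $\operatorname{Ric}(\bar g)$ through $\operatorname{Ric}(g)$ and $S$, only the \emph{Ricci} tensor of $g$, and not its full Riemann (equivalently Weyl) tensor, enters explicitly. Using $R_{ij}=\kappa g_{ij}$ and simplifying (the $\bar\lambda\otimes\bar\lambda$ terms cancel), I expect to obtain
\[\bar R_{kj}=\kappa\, g_{kj}-\tau\,\bar g_{kj}+(L^{-1})^p_k\,H_{pj},\qquad \tau=g^{pq}H_{pq},\quad H_{pj}=\nabla_p\lambda_j,\]
whose trace against $\bar g^{-1}=Lg^{-1}$ gives $\bar R=\kappa\operatorname{tr}L-(n-1)\tau$. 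Now I would \emph{assume}, for contradiction, that $\bar g$ is Einstein, $\bar R_{kj}=c\,\bar g_{kj}$ (for $n\ge3$ this automatically forces constant scalar curvature, so it matches the hypothesis). Multiplying the displayed identity by $L^k_m$ and using $L^k_m\bar g_{kj}=g_{mj}$ solves for the Hessian and closes the prolongation in the clean constant-curvature form $\nabla_m\lambda_j=H_{mj}=\nu\, g_{mj}-\kappa\, L_{mj}$ for some function $\nu=c+\tau$.

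The heart of the argument is to confront this forced second prolongation with the \emph{intrinsic} integrability of \eqref{eq:L}. Commuting covariant derivatives in \eqref{eq:L} and applying the Ricci identity gives, for any Einstein $g$, an identity relating the curvature contracted with $L$ to $H$; substituting $H=\nu g-\kappa L$ makes the $\nu$-terms cancel and leaves the pointwise constraint
\[R^l_{\ adc}L_{lb}+R^l_{\ bdc}L_{al}=\kappa\bigl(L_{ad}g_{bc}+L_{bd}g_{ac}-L_{ac}g_{bd}-L_{bc}g_{ad}\bigr).\]
Evaluating this at a point in a $g$-orthonormal eigenbasis of $L$ and reading off the components with $a=d$, $b=c$ yields $(\ell_a-\ell_b)R_{abab}=\kappa(\ell_a-\ell_b)$, i.e. $R_{abab}=\kappa$ whenever $\ell_a\ne\ell_b$. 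The coefficient here is the \emph{full} Einstein constant $\kappa$, not $\kappa/(n-1)$; this mismatch is exactly what flatness would repair and what non-vanishing curvature now prevents.

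Finally I would invoke the classical eigenvalue lemma for \eqref{eq:L}: contracting it with a $g$-unit eigenvector $v$ of $L$ gives $\partial_c\ell=2(\lambda\cdot v)v_c$, so every eigenvalue of multiplicity $\ge2$ is locally constant and orthogonal to $\lambda$. Since $\mathrm{d}\operatorname{tr}L\ne0$ means $\lambda\ne0$ on an open set, at a generic point of that set there is a \emph{simple} real eigendirection $v_1$ with $\lambda\cdot v_1\ne0$; simplicity forces $\ell_1\ne\ell_b$ for all $b\ne1$, whence $R_{1b1b}=\kappa$ for every $b\ne1$ and therefore $R_{11}=\sum_{b\ne1}R_{1b1b}=(n-1)\kappa$. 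This contradicts the Einstein relation $R_{11}=\kappa$ precisely when $(n-2)\kappa\ne0$, i.e. for $n\ge3$ and $\kappa\ne0$, so $\bar g=gL^{-1}$ cannot be Einstein of constant scalar curvature. I anticipate the main obstacle to be twofold: first, verifying rigorously that the Weyl curvature of $g$ genuinely disappears from $\operatorname{Ric}(\bar g)$ (so that the Einstein hypothesis on $g$ alone suffices to close the prolongation in the form above); and second, extending the eigenvalue lemma to points where $L$ is not $\R$-diagonalisable — complex eigenvalues or Jordan blocks — where the Nijenhuis-geometric machinery of the series, rather than naive diagonalisation, is the right tool, with a density argument away from such points completing the proof.
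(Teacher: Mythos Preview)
Your approach is genuinely different from the paper's and, for $n\ge 3$, is correct in outline. A few remarks.

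Your first anticipated obstacle is not one. The change-of-Ricci formula $\bar R_{ik}-R_{ik}=\nabla_jS^j_{ik}-\nabla_kS^j_{ij}+S^j_{js}S^s_{ik}-S^j_{ks}S^s_{ij}$ with $S^k_{ij}=-\lambda^k\bar g_{ij}$ never involves the full Riemann tensor of $g$; only $R_{ik}$ enters from the start. A direct computation confirms your formula $\bar R_{ik}=R_{ik}-\tau\bar g_{ik}+(L^{-1})^s_iH_{sk}$, with the $\bar\lambda_i\bar\lambda_k$ and $(\lambda\!\cdot\!\bar\lambda)\bar g_{ik}$ terms cancelling exactly as you predict.

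Your second obstacle can be bypassed altogether. The relation $\nabla_m\lambda_j=\nu\, g_{mj}-\kappa L_{mj}$ that you extract from the Einstein hypothesis on $\bar g$ should be confronted not with the Ricci identity for $\nabla L$ but with the \emph{unconditional} second prolongation of the compatibility equation for an Einstein $g$: this is precisely \eqref{VnB} of the paper (Corollaries~1 and~2 of \cite{einstein}), which reads $\nabla_m\lambda_j=\nu' g_{mj}-K L_{mj}$ with $K=\tfrac{n}{\,n-1\,}\kappa$. For $n\ge 3$ and $\kappa\ne 0$ one has $K\ne\kappa$; subtracting gives $(\nu-\nu')g_{mj}=(\kappa-K)L_{mj}$, so $L$ is pointwise a multiple of $\Id$, and by Weyl's theorem that multiple is constant, contradicting $\mathrm d\operatorname{tr}L\ne 0$. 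No diagonalisation, no eigenvalue lemma, no density argument is needed.

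This shortcut is essentially what the paper does, though packaged differently: the paper passes to the geodesically equivalent metric $\tilde g=(\det L)^{-1}gL^{-1}$, uses that $\tilde g$ is itself Einstein (again from \cite{einstein}), writes $\bar g=\psi^{-2}\tilde g$ with $\psi=(\det L)^{-1/2}$, and reads off from the conformal Ricci formula \eqref{eq1} that Einstein-ness of $\bar g$ forces $\tilde\nabla_i\tilde\nabla_j\psi\propto\tilde g_{ij}$; via \eqref{schouten3}--\eqref{schouten4} this forces $R_{ij}\propto\tilde g_{ij}$ and hence $L\propto\Id$. Your direct computation of $\bar R_{ik}$ via $S^k_{ij}=-\lambda^k\bar g_{ij}$ is arguably more transparent than this detour through $\tilde g$, and it has the advantage of making the role of the single tensor $H=\nabla\nabla\lambda$ explicit.

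One genuine gap remains: your contradiction $(n-2)\kappa=0$ is empty for $n=2$, and the theorem (and the paper) does cover $n=2$. The paper treats this case separately: it rewrites \eqref{VnB} for $(\bar g,L)$ (still geodesically compatible by Corollary~\ref{cor:1}), uses Cayley--Hamilton to eliminate $L^2$, and finds that the coefficient of $L$ in the resulting expression, namely $C-2K\operatorname{tr}L$, would have to be constant --- impossible when $K\ne0$ and $\mathrm d\operatorname{tr}L\ne0$. Your framework adapts to $n=2$ the same way: with $H=\nu g-\kappa L$ in hand, run your prolongation once more for $(\bar g,L)$ instead of invoking sectional curvatures.
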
 

In Theorem \ref{thm:einstein} we allow any dimension $n\ge 2$.  In dimensions  $2$ and $3$,  Einstein metrics with constant scalar curvature are just metrics of constant curvature. In dimensions $\ge 3$ the scalar curvature of any Einstein metric is automatically constant. 

In dimension $n\ge 3$, the analog of Theorem \ref{thm:einstein} for metrics of constant curvature (which is in fact enough for our study of  metrics  $gL^{-k}$) is due to N. Sinjukov \cite{Sinjukov1}.
In dimension  $n\ge 3$, our proof goes along the same lines as that by Sinjukov (which was merely sketched), but essentially uses results which were    not available in the time of \cite{Sinjukov1}.  The proof in dimension $n=2$ is different, is partially based on the technology we develop in the present paper  and as far as we know is  new.

The example of a sequence    ($g, gL^{-1},\dots, gL^{-k}$ flat, $gL^{-k-1}$ of constant non-zero curvature) with  $k=n$ was constructed in  \cite{Ferapontov-Pavlov} and is as follows: 
the metric $g$ and  operator $L$ are  given in local coordinates $x^1,\dots, x^n$ by
\begin{equation}\label{ex:LC} 
g= \sum_{i=1}^n \prod_{j\ne i}  (x^i-x^j) (\mathrm{d}x^i)^2 \quad \textrm{and} \quad  L= \operatorname{diag}(x^1,\dots,x^n).
\end{equation}

One can slightly generalise this example to  include the  points at which the operator $L$ is not diagonalisable. The generalisation is essentially due  to    \cite{Nijenhuis1}:   the metric is \cite[Eq. (37)]{Nijenhuis1} and  the operator  is  \cite[Eq. (36)]{Nijenhuis1}. We repeat these  formulas below: 
\begin{equation} \label{eq:gL}
g^{-1}= 
\begin{pmatrix}    
0 & \cdots & 0 & 0 & \!\!\!\! -1\\ 
0 &  \cdots & 0  &\! -1 & x^1\\ 
\vdots & \iddots &\iddots & \iddots  &x^{{2}} \\ 
0&\!\!-1&x^{{1}}& \iddots &   \vdots    \\
-1&x^{{1}}&x^{{2}}  &\cdots & \!\!\! x^{{n-1}}
  \end{pmatrix}\  , \ \ L = \begin{pmatrix}
x^1 & 1 &  & & \\
x^2 & 0 & 1 & & \\
\vdots & \vdots & \ddots & \ddots & \\
x^{n-1} &  0 & \dots & 0 & 1\\
x^n & 0  & \dots & 0 & 0 
\end{pmatrix}.  
\end{equation} 
 These two examples are  related as follows: if one writes the pair \eqref{ex:LC} in the coordinates 
$(\sigma_1,\dots,\sigma_n)$, where $\sigma$'s  are  the coefficients of the characteristic polynomial $\det(t \,\Id-L)= t^n-\sigma_1t^{n-1}-\sigma_2t^{n-2}-\dots-\sigma_n$, one obtains (up to a sign) the pair \eqref{eq:gL}. Of course, the inverse transformation, from   (\ref{eq:gL})  to \eqref{ex:LC},  is possible only near those points where  $L$ has 
$n$  different  real-valued eigenvalues.

In both cases,  the metrics $g, gL^{-1},\dots, gL^{-n}$ are flat and $gL^{-n-1}$ has constant non-zero curvature. All together they generate a pencil of compatible Poisson brackets of hydrodynamic type of dimension $n+2$. It is not hard to see that this pencil is {\it maximal}. Our next result shows that a pencil with such properties is unique.

\begin{Th} \label{thm:main4}  Let $L$ be a Nijenhuis operator  which is invertible, self-adjoint with respect to a metric $g$ and differentially non-degenerate almost everywhere. Suppose the metrics 
$gL^{-k}$ are flat for $k=0,\dots,n$ and the metric $gL^{-n-1}$ has constant curvature $K\ne 0$. 
 Then  $g$ and 
$L$ are geodesically compatible. Moreover,  in a neighborhood of every  point, where $L$ is differentially non-degenerate, 
   the pair $(g, L)$ is locally isomorphic (up to multiplication of  $g$ by  a constant)  to \eqref{eq:gL}.
\end{Th}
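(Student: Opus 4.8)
The plan is to decouple the statement into recognising the operator $L$ and then recognising the metric $g$. Because $L$ is invertible, $g$-self-adjoint and differentially non-degenerate almost everywhere, I would first apply the canonical-form theorem for differentially non-degenerate Nijenhuis operators from \cite{Nijenhuis1}: near a generic point there are local coordinates $x^1,\dots,x^n$, namely (up to sign) the coefficients $\sigma_1,\dots,\sigma_n$ of the characteristic polynomial of $L$, in which $L$ is the companion matrix on the right-hand side of \eqref{eq:gL}. This settles the ``$L$-part'' of the asserted normal form; the remaining task is to prove that in these coordinates $g$ must equal, up to a constant factor, the Hankel matrix $g^{-1}$ on the left-hand side of \eqref{eq:gL}. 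Throughout, since $g=gL^{0}$ is flat, I may alternatively pass to flat coordinates of $g$, in which the contravariant metric of $gL^{-k}$ is simply $L^{k}g^{-1}$, so that the whole chain is generated by the single $g$-symmetric tensor $L^{ij}$.

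The core of the argument is to establish that $g$ and $L$ are geodesically compatible, i.e.\ that $L^{ij}$ satisfies the Sinjukov system \eqref{eq:L}. I would work on the dense open set where $L$ is differentially non-degenerate and encode the failure of geodesic compatibility by the tensor measuring the defect of $\nabla_k L^{ij}=\lambda^i\delta^j_k+\lambda^j\delta^i_k$, then express the curvatures of the successive metrics $gL^{-k}$ through this defect. The hypothesis that $gL^{-k}$ is flat for the full range $k=0,\dots,n$, combined with an inductive rank argument that uses differential non-degeneracy in an essential way, should force the defect to vanish; here Corollary \ref{cor:1} is the mechanism that propagates geodesic compatibility from one metric of the chain to the next, while the converse direction already implicit in Theorem \ref{thm:main1} links flatness of $gL^{-1}$ back to the first-order equation. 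Once geodesic compatibility holds on the differentially non-degenerate locus, it holds everywhere by continuity, since \eqref{eq:L} is a closed condition; this proves the first assertion of the theorem globally.

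With geodesic compatibility in hand, I would determine $g$ using Theorem \ref{thm:main3}: in flat coordinates of $g$ the operator is the quadratic \eqref{norm}, and the parameter $K$ there equals the constant curvature of $gL^{-1}$, hence vanishes, so $L^{ij}$ is affine. Iterating the same description along the chain via Corollary \ref{cor:1} and Corollary \ref{cor:compatible}, the requirement that $gL^{-k}$ stay flat for all $k\le n$ while $gL^{-n-1}$ acquires nonzero constant curvature leaves, after exhausting the $n$ available coordinate directions, essentially no freedom: together with differential non-degeneracy of $L$ this rigidifies the affine data $(a^{ij},b^i)$ and identifies the pair $(g,L)$ with \eqref{eq:gL} in the canonical coordinates $x=\sigma$. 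The residual constant scaling of $g$ (which rescales all curvatures and hence $K$) accounts for the ``up to multiplication of $g$ by a constant'' in the statement; Theorem \ref{thm:einstein} confirms in passing that the chain cannot be prolonged, so the length $n+2$ is forced rather than assumed.

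The main obstacle is the middle step: converting purely metric (curvature) information back into the first-order Sinjukov equation. A naive pointwise computation is insufficient, because flatness of a single metric $gL^{-1}$ does not by itself yield geodesic compatibility; one genuinely needs the full length-$n$ flat chain and the differential non-degeneracy of $L$ to run the inductive rank count. I would therefore expect this to be where the bulk of the technical work, and the use of the deeper structure of Nijenhuis geometry, is concentrated.
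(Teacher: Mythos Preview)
Your plan diverges from the paper's proof in a way that leaves the decisive step unsubstantiated.

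The paper does not try to establish geodesic compatibility abstractly first and then identify $g$. Instead it works directly in eigenvalue coordinates: on the dense set where $L$ has simple spectrum one takes $L=\operatorname{diag}(x^1,\dots,x^n)$, so that $g=\operatorname{diag}(g_1,\dots,g_n)$ is forced to be diagonal. A straightforward computation (Lemmas~\ref{Lem:4} and~\ref{Lem:5}) shows that the only nontrivial curvature components of $gP(L)^{-1}$ are \emph{linear} in the values $P(x^1),\dots,P(x^n),P'(x^1),\dots,P'(x^n)$. The hypotheses therefore translate into the single identity \eqref{eq:c1} valid for every polynomial $P$ of degree $\le n+1$. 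Evaluating at $P_0(t)=\prod_s(t-\hat x_s)$ and $P_1(t)=tP_0(t)$, which both vanish at all $x^i$, isolates $\Gamma^j_{ij}$ and $\Gamma^j_{ii}$; one more choice of $P$ then gives the explicit formula \eqref{eq:Kg} for $g_k$, which is precisely the Levi--Civita metric \eqref{ex:LC}. Geodesic compatibility is then a consequence of the identification, not an intermediate step.

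Your proposed route, by contrast, hinges on ``expressing the curvatures of $gL^{-k}$ through the defect of \eqref{eq:L}'' and an ``inductive rank argument''. Neither ingredient is supplied, and I do not see how to make the first one work: the curvature of $gL^{-k}$ depends on second derivatives of $L$, not merely on the first-order defect $\nabla_k L^{ij}-\lambda^i\delta^j_k-\lambda^j\delta^i_k$, so there is no obvious linear relation between them. The appeal to Corollary~\ref{cor:1} is circular here: that corollary \emph{assumes} geodesic compatibility of $(g,L)$ and transfers it to $(gL^{-1},L)$; it cannot be used to deduce compatibility in the first place. Likewise, Theorem~\ref{thm:main1} has no ``converse direction'' in the paper---flatness of $gL^{-1}$ alone does not yield \eqref{eq:L}. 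Finally, even after geodesic compatibility, the last step (``leaves essentially no freedom'') would still require a concrete computation to pin down the parameters $(a^{ij},b^i)$; the paper bypasses all of this by the polynomial trick above.
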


\begin{Rem}{\rm
For dimension two, Theorem \ref{thm:main4} follows from  \cite[Theorem 10]{Ferapontov-viniti}. In arbitrary dimension,  this statement in a slightly different form was announced  in  \cite{Ferapontov-Pavlov}, but to the best of our knowledge, the proof has never been published.
}\end{Rem}

\begin{Rem} \label{rem:past}{\rm 
The case when also the metric $gL^{-n-1}$  is flat is much easier (provided $L$ is differentially non-degenerate and therefore is diagonalisable almost everywhere).  We discuss it within the proof, in Remark \ref{rem:future}. In this case essentially  only the following example and 
 its natural modifications  with complex-valued eigenvalues  are  possible: the metric is  $g= \operatorname{diag}(g_1(x^1),\dots,g_n(x^n))$ and the operator is $L = \operatorname{diag}(\ell_1(x^1),\dots, \ell_n(x^n))$. In this case, generators of commuting flows coming from this multihamiltonian structure are all functions of the Nijenhuis operator $L$ and are therefore Nijenhuis operators also.  In  this case, the metric $g$ is not geodesically   compatible with $L$.  Systems   of hydrodynamic type \eqref{sys:hidro} such that  $A$ is a Nijunhuis operator are well-understood in the case when $L$ is diagonalisable; they  decouple in Hopf equations and can be solved  (almost) explicitly.  An inclusion of singular points to this situation was done in \cite{Nijenhuis3}.

Note that  if we do not assume that $L$ is differentially non-degenerate, then there are non-trivial examples when the above explained process does not stop so that all the metrics of the form $gL^{-k}$, $k\in \mathbb N$ are flat and we obtain an infinite-dimensional Poisson pencils of hydrodynamic type (see Example \ref{ex:new}).}  \end{Rem}

\section{ Proof of Theorem \ref{thm:main1}} \label{sec:tensor}

Let $L$ be an operator geodesically compatible with a metric $g$ of any signature. We do not assume a priori that the metric is flat. We consider the function  $\lambda:= \frac{1}{2}\tr L$,  and its differential  $\mathrm{d}\lambda$ whose components  will be denoted by $\lambda_i =  \frac{\partial \lambda}{\partial x^i}$ and we denote the   components of  $\operatorname{grad}_g\lambda$   by $\lambda^i :=  g^{is }\lambda_s$.

We start with Lemma describing  the relationship between  Christoffel symbols of $\nabla$ and $\bar\nabla$, the Levi-Civita connections  of the metrics $g$ and $g = gL^{-1}$.

\begin{Lemma} \label{Lem:0}
Let $g$ and $L$ be geodesically compatible and $L$ be non-degenerate. Then the Christoffel symbols  of $\bar{g}= g L^{-1}$ are given by 
\begin{equation} \label{eq:Gamma}
\bar \Gamma^i_{jk}= \Gamma^i_{jk} - \lambda^i \bar{g}_{jk},
\end{equation}
where $\Gamma^i_{jk}$ are the Christoffel symbols of $g$.
\end{Lemma}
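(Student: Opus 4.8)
The plan is to compute the difference tensor $\bar\Gamma^i_{jk}-\Gamma^i_{jk}$ directly from the Koszul formula written covariantly with respect to $\nabla$, the Levi-Civita connection of $g$. Since the difference of two symmetric connections is a tensor and $\bar\nabla$ is the Levi-Civita connection of $\bar g=gL^{-1}$, one has the identity
\begin{equation*}
\bar\Gamma^i_{jk}-\Gamma^i_{jk}=\tfrac12\,\bar g^{is}\bigl(\nabla_j\bar g_{ks}+\nabla_k\bar g_{js}-\nabla_s\bar g_{jk}\bigr),
\end{equation*}
which reduces to the usual expression for Christoffel symbols at a point where $\Gamma$ vanishes, and both sides are tensorial so it holds everywhere. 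Thus the whole problem is reduced to computing $\nabla_m\bar g_{ij}$, and the only inputs needed are the defining PDE \eqref{eq:L} of geodesic compatibility and the $g$-self-adjointness of $L$.

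First I would rewrite \eqref{eq:L} in the mixed form $\nabla_m L^a_b=\lambda^a g_{bm}+\lambda_b\,\delta^a_m$, obtained by raising one index in $\nabla_m L_{cb}=\lambda_c g_{bm}+\lambda_b g_{cm}$ (recall $\lambda_i=\tfrac12\partial_i\tr L$ and $\lambda^i=g^{is}\lambda_s$). Since $\nabla g=0$, differentiating $\bar g_{ij}=g_{ik}(L^{-1})^k_j$ gives $\nabla_m\bar g_{ij}=g_{ik}\nabla_m(L^{-1})^k_j$, and using $\nabla_m L^{-1}=-L^{-1}(\nabla_m L)L^{-1}$ together with the above, after contracting indices, I expect to obtain
\begin{equation*}
\nabla_m\bar g_{ij}=-\bar\lambda_i\,\bar g_{jm}-\bar\lambda_j\,\bar g_{im},\qquad \bar\lambda_i:=\bar g_{ia}\lambda^a .
\end{equation*}
The step where self-adjointness is essential is the identification $g_{bm}(L^{-1})^b_j=\bar g_{mj}$, i.e.\ that lowering the upper index of $L^{-1}$ with $g$ produces the symmetric tensor $\bar g$; this is what forces the two covectors arising in the two terms to coincide (so that $\mu_i:=\lambda_b(L^{-1})^b_i$ equals $\bar\lambda_i$) and keeps the right-hand side symmetric in $i,j$.

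Finally I would substitute this expression into the Koszul formula. Writing out the three terms $\nabla_j\bar g_{ks}$, $\nabla_k\bar g_{js}$, $-\nabla_s\bar g_{jk}$ and using $\bar g_{pq}=\bar g_{qp}$, the contributions proportional to $\bar\lambda_j$ and to $\bar\lambda_k$ cancel pairwise, leaving only $-2\,\bar\lambda_s\,\bar g_{jk}$. Contracting with $\tfrac12\bar g^{is}$ and using $\bar g^{is}\bar\lambda_s=\bar g^{is}\bar g_{sa}\lambda^a=\lambda^i$ then yields $\bar\Gamma^i_{jk}-\Gamma^i_{jk}=-\lambda^i\bar g_{jk}$, which is the claim. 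The computation is entirely routine tensor algebra; the only points requiring genuine care are producing the correct mixed form of \eqref{eq:L} and the repeated use of the $g$-self-adjointness of $L$ (hence of $L^{-1}$), which is precisely what guarantees that the index of $\lambda$ in the final formula is raised by $g$ rather than by $\bar g$. An alternative route would invoke the Levi-Civita form $\tilde\Gamma^i_{jk}=\Gamma^i_{jk}+\delta^i_j\phi_k+\delta^i_k\phi_j$ for the geodesically equivalent metric $\tilde g=\tfrac{1}{|\det L|}gL^{-1}$ and then correct by the conformal rescaling $\bar g=|\det L|\,\tilde g$, but this requires pinning down $\phi$ explicitly and verifying a consistency relation, so the direct computation above seems cleaner.
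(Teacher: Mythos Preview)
Your argument is correct. The covariant Koszul identity you use,
\(
\bar\Gamma^i_{jk}-\Gamma^i_{jk}=\tfrac12\,\bar g^{is}\bigl(\nabla_j\bar g_{ks}+\nabla_k\bar g_{js}-\nabla_s\bar g_{jk}\bigr),
\)
is the standard formula for the difference tensor of two Levi-Civita connections, and the computation of $\nabla_m\bar g_{ij}$ via $\nabla_m L^{-1}=-L^{-1}(\nabla_m L)L^{-1}$ together with the mixed form of \eqref{eq:L} goes through exactly as you outline. The point you flag about $g$-self-adjointness of $L$ (hence of $L^{-1}$) is indeed what is needed to identify $g_{bm}(L^{-1})^b_j=\bar g_{mj}$ and $\lambda_b(L^{-1})^b_j=\bar\lambda_j$.

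The paper takes a slightly different and somewhat shorter route. Rather than differentiating the \emph{covariant} metric $\bar g_{ij}=g_{ik}(L^{-1})^k_j$, it observes that the \emph{contravariant} metric is simply $\bar g^{ij}=L^{ij}$, for which \eqref{eq:L} (with two indices raised) reads $\nabla_k\bar g^{ij}=\lambda^i\delta^j_k+\lambda^j\delta^i_k$. This is then rewritten as $\partial_k\bar g^{ij}+\bar g^{is}(\Gamma^j_{sk}-\lambda^j\bar g_{sk})+\bar g^{js}(\Gamma^i_{sk}-\lambda^i\bar g_{sk})=0$, which says precisely that $\bar g$ is parallel with respect to the symmetric connection $\Gamma^i_{jk}-\lambda^i\bar g_{jk}$; uniqueness of the Levi-Civita connection then gives the result. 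The advantage of the paper's approach is that it never touches $L^{-1}$ or its derivative and replaces the Koszul cancellation by a ``guess-and-verify'' via uniqueness. Your approach, on the other hand, is a direct computation that makes the role of self-adjointness of $L$ fully explicit and requires no inspired guess for the candidate connection. Both are entirely routine once one has \eqref{eq:L}.
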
 
\begin{proof}
We use the equation of geodesic compatibility (see e.g. \cite[Theorem 2]{BoMa2003}):
\begin{equation}\label{eq:L} 
\nabla_k L_{ij}= \lambda_i g_{jk} + \lambda_j g_{ik}, \quad \mbox{with $L_{ij} = L_i^s g_{s j}$},
\end{equation}
which we rewrite raising two  indexes by $g$ as 
\begin{equation} \label{eq:geodcomp} 
\nabla_k L^{ij} = \lambda^i\delta^j_k + \lambda^j \delta^i_k, \quad \mbox{with $L^{ij} = L^i_s g^{s  j}$}
\end{equation} 

Notice that $L^{ij} = \bar g^{ij}$  (where $\bar g^{is } \bar g_{s  j} = \delta^i_j$ as usual). Hence \eqref{eq:geodcomp} gives
$$
\pd{\bar{g}^{ij}}{u^k} + \bar{g}^{is}\Gamma^j_{sk} + \bar{g}^{js}\Gamma^i_{sk} = \lambda^i \delta^{j}_k + \lambda^j \delta^{i}_k
$$
or, equivalently,
$$
\pd{\bar{g}^{ij}}{u^k} + \bar{g}^{i s } \big( \Gamma_{s k}^j - \lambda^j \bar{g}_{s k}\big) + \bar{g}^{js } \big( \Gamma_{s k}^i - \lambda^i \bar{g}_{s k}\big)=0.
$$
This means that $\bar{g}$ is parallel w.r.t. the symmetric connection whose Christoffel symbols $\bar \Gamma^i_{jk}$ are defined by \eqref{eq:Gamma}. Thus, this is the Levi-Civita connection for $\bar{g}$, as stated.
\end{proof}

\begin{Cor}[Essentially, \cite{Sinjukov1}]
 \label{cor:1} Let $g$ be geodesically compatible with  a non-degenerate  operator  $L$. 
Then  $\bar g = gL^{-1}$ is geodesically compatible with  $L$. 
\end{Cor}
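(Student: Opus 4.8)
The plan is to check, directly, the three defining properties of geodesic compatibility of $L$ with the metric $\bar g = gL^{-1}$: non-degeneracy, self-adjointness with respect to $\bar g$, and the system \eqref{eq:L} written for the connection $\bar\nabla$. Non-degeneracy of $L$ is part of the hypothesis, so only the last two need attention, and the whole argument will run on the explicit Christoffel relation \eqref{eq:Gamma} furnished by Lemma \ref{Lem:0}.

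First I would record the key algebraic observation. Lowering one index of $L$ with $\bar g$ gives $\bar g_{is}L^s_j = (gL^{-1})_{is}L^s_j = g_{ij}$, so the operator $L$ lowered by $\bar g$ is simply the metric $g$, which is in particular symmetric; hence $L$ is self-adjoint with respect to $\bar g$. I would also note that $\lambda = \tfrac12\tr L$ is insensitive to the choice of metric, since the trace of a $(1,1)$-tensor is an invariant; consequently the covector $\mathrm d\lambda$ entering \eqref{eq:L}, with components $\lambda_i$, is the same for $g$ and for $\bar g$.

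With these two remarks in place, verifying \eqref{eq:L} for the pair $(\bar g, L)$ reduces, after substituting $\bar L_{ij} = g_{ij}$, to the single identity $\bar\nabla_k g_{ij} = \lambda_i\bar g_{jk} + \lambda_j\bar g_{ik}$. I would establish this by a one-line computation: since $\nabla_k g_{ij} = 0$ and, by \eqref{eq:Gamma}, $\bar\Gamma^s_{ik} - \Gamma^s_{ik} = -\lambda^s\bar g_{ik}$, only the two correction terms survive, giving $\bar\nabla_k g_{ij} = \lambda^s g_{sj}\,\bar g_{ik} + \lambda^s g_{si}\,\bar g_{jk} = \lambda_i\bar g_{jk} + \lambda_j\bar g_{ik}$, which is exactly the required relation.

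There is no genuine obstacle once Lemma \ref{Lem:0} is at hand; the content is entirely in recognising that $\bar L_{ij} = g_{ij}$ and $\bar\lambda = \lambda$, so that the geodesic-compatibility equation for $(\bar g, L)$ collapses into a metricity-type relation for $g$ with respect to $\bar\nabla$, which \eqref{eq:Gamma} delivers immediately. The only care needed is in the bookkeeping of indices and the consistent raising and lowering by $g$ versus $\bar g$.
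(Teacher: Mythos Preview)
Your proof is correct and follows essentially the same route as the paper: both identify $\bar L_{ij}=L^s_i\bar g_{sj}=g_{ij}$, then compute $\bar\nabla_k g_{ij}$ using $\nabla g=0$ together with the Christoffel relation \eqref{eq:Gamma} from Lemma~\ref{Lem:0}, obtaining $\lambda_i\bar g_{jk}+\lambda_j\bar g_{ik}$. Your explicit remarks on self-adjointness of $L$ with respect to $\bar g$ and on the invariance of $\lambda=\tfrac12\tr L$ are minor clarifications the paper leaves implicit, but the argument is otherwise the same.
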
 

\begin{proof}
Using the identity $\nabla g = 0$ and Lemma \ref{Lem:0} we get
\begin{equation*}
    \begin{aligned}
 \bar{\nabla}_k &(L^s_i \bar g_{s j} )  =    \bar{\nabla}_k g_{ij}  = \pd{g_{ij}}{u^k} - g_{j s }\bar{\Gamma}^s_{ik} - g_{js}\bar{\Gamma}^s_{ik}  \\   = g_{is } &\big(\Gamma_{jk}^s - \bar{\Gamma}_{jk}^s \big) + g_{js} \big(\Gamma_{ik}^s - \bar{\Gamma}_{ik}^s \big) 
     = g_{is} \lambda^s \bar{g}_{jk} + g_{js} \lambda^s \bar{g}_{ik} = \lambda_i \bar{g}_{jk} + \lambda_j \bar{g}_{ik}.
    \end{aligned}
\end{equation*}
It remains to notice that $\bar{\nabla}_k (L^s_i \bar g_{s j} ) = \lambda_i \bar{g}_{jk} + \lambda_j \bar{g}_{ik}$ is exactly the condition \eqref{eq:L} for $\bar{g}$ and $L$.
\end{proof}

\begin{Lemma}[\cite{Sinjukov1} in dimensions $n\ge 3$]  \label{Lem:1b} Assume that $g$ and $L$ are geodesically compatible, $g$  is flat and $L$ is  non-degenerate.  
 Then $\bar g = gL^{-1}$  has a  constant, possibly zero,    curvature. 
\end{Lemma}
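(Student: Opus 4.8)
The plan is to compute the curvature of $\bar g = gL^{-1}$ directly from Lemma~\ref{Lem:0}, which already presents the Levi-Civita connection of $\bar g$ as a fixed modification of that of $g$. Writing $\bar\Gamma^i_{jk} = \Gamma^i_{jk} + A^i_{jk}$ with difference tensor $A^i_{jk} = -\lambda^i\bar g_{jk}$ (this is exactly \eqref{eq:Gamma}), I would use the standard formula expressing the curvature of $\bar\nabla$ through that of $\nabla$ and the tensor $A$,
\begin{equation*}
\bar R^i_{\ jkl} = R^i_{\ jkl} + \nabla_k A^i_{lj} - \nabla_l A^i_{kj} + A^i_{km}A^m_{lj} - A^i_{lm}A^m_{kj}.
\end{equation*}
Since $g$ is flat, $R^i_{\ jkl} = 0$, so $\bar R$ becomes a polynomial expression in $\lambda^i$, $\bar g_{jk}$ and their first $\nabla$-derivatives, and the whole problem reduces to a bookkeeping computation plus the control of these derivatives.

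Two inputs feed this computation. First, from \eqref{eq:geodcomp} one reads off $\nabla_k\bar g_{ij}$ by differentiating $\bar g_{is}\bar g^{sj} = \delta^j_i$ and using $\bar g^{ij}=L^{ij}$, which gives $\nabla_k\bar g_{ij} = -\mu_i\bar g_{jk} - \mu_j\bar g_{ik}$ with $\mu_i := \bar g_{is}\lambda^s$. Second — and this is the step where flatness is genuinely used — I claim $\nabla_i\lambda_j = c\,g_{ij}$ for some function $c$. To prove it I would covariantly differentiate \eqref{eq:geodcomp} once more, invoke that on a flat manifold second covariant derivatives commute in order to symmetrise in the two differentiation indices, and then contract the resulting identity; the contraction forces $\nabla_l\lambda^i = c\,\delta^i_l$, equivalently $\nabla_i\lambda_j = c\,g_{ij}$, with $c = \tfrac1n\,\Delta_g\lambda$. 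Substituting both facts into the curvature formula, the terms containing the product $\lambda^i\mu$ cancel in pairs, leaving
\begin{equation*}
\bar R^i_{\ jkl} = c\,(\delta^i_l\,\bar g_{jk} - \delta^i_k\,\bar g_{jl}),
\end{equation*}
so that $\bar g$ has constant sectional curvature $c$ at every point.

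The remaining, and genuinely delicate, point is to upgrade the \emph{pointwise} constant $c$ to a \emph{global} constant, so that $\bar g$ has constant curvature in the strict sense required by the statement. Here I would differentiate the Hessian identity $\nabla_i\lambda_j = c\,g_{ij}$ one further time, again commute the covariant derivatives using flatness of $g$, and contract with $g^{ij}$; this yields $(n-1)\nabla_k c = 0$, whence $c$ is constant for every $n\ge 2$. I expect this to be the main obstacle conceptually: it is a Schur-type argument, but the classical Schur lemma fails in dimension two, and it is precisely the flatness of $g$ — rather than the isotropy of $\bar g$ — that rescues the case $n=2$, which is the part of the lemma not covered by Sinjukov. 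With $c$ constant, the displayed curvature tensor is exactly that of a space form, which proves the claim.
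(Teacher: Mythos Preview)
Your argument is correct and takes a genuinely different route from the paper. The paper does not compute $\bar R$ directly; instead it uses the classical criterion that a metric whose Tashiro-type equation
\[
\nabla_j\nabla^i h - \tfrac{1}{n}(\nabla^s\nabla_s h)\,\delta^i_j = 0
\]
admits an $(n+1)$-dimensional solution space must have constant curvature. The $n+1$ solutions are supplied by the affine linear Casimirs of the flat metric $g$, which via \eqref{eq:additional} automatically satisfy the Tashiro equation for $\bar g$; the case $n=2$ is handled by a short Killing-vector argument. By contrast, you read off $\bar R$ from the difference-tensor formula, feeding in the two identities $\nabla_k\bar g_{ij}=-\mu_i\bar g_{jk}-\mu_j\bar g_{ik}$ and $\nabla_i\lambda_j=c\,g_{ij}$ obtained from \eqref{eq:geodcomp} and flatness, and then use a Schur-type step on $g$ (not on $\bar g$) to make $c$ constant, which indeed works uniformly for $n\ge 2$.

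What each buys: the paper's proof is more conceptual and plugs directly into the Casimir machinery used in Section~\ref{sec:4}, but it relies on an external characterisation of constant-curvature spaces and needs a separate $n=2$ argument. Your approach is self-contained and more elementary; moreover, it essentially proves Lemma~\ref{bolsadd1} along the way (your $c$ is $-K$ in the paper's sign convention), whereas the paper establishes that identity afterwards by a different method using the algebraic relation \eqref{magic}.
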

\begin{proof}
For a metric $g$, we consider the following system of linear partial differential   equations  on the function $h$:
\begin{equation}\label{eq:tashiro}
\nabla_j \nabla^i h - \tfrac{1}{n} (\nabla^s \nabla_s h)  \delta^i_j =0.
\end{equation}
This equation naturally appeared in different parts of differential geometry  and  is well understood; let us recall the following known property  of this  equation:

{\it If the space of solutions of \eqref{eq:tashiro} is at least $n+1$-dimensional, then the  metric has constant curvature} (for $n\ge 3$ , see   \cite[end of page 343]{vries}). 

We did not find the  case $n=2$ in the literature 
so let us give a  proof.
Consider the canonical complex structure $J= J^i_j$  (which is the $g$-volume form with one  index raised) and observe that 
 \eqref{eq:tashiro}  implies that the ``skew gradient'' vector $J^i_s \nabla^s h$ is  Killing. Indeed, $\nabla_j (J^i_\beta \nabla^\beta h) = 
 J^i_\beta \nabla_j  \nabla^\beta h =  c J^i_j$ is $g$-skew-symmetric (with $c=\frac{1}{n} \nabla_s \nabla^s h$) so that the Killing equation is fulfilled.    The existence of three linearly independent solutions of \eqref{eq:tashiro} implies  the existence of 
 two linearly independent Killing vector fields so  the curvature is constant, as claimed.

For each  function  $h$ in view of \eqref{eq:Gamma},  its second 
$\bar g$-covariant derivative 
reads:
\begin{equation}
\label{eq:additional}
 \bar \nabla_j \bar \nabla_i  h =  \nabla_j  \nabla_i  h  +  \lambda^s (\nabla_s h)  \bar g_{ji}.
\end{equation}

Since $g$ is flat, locally it has $n+1$ linearly independent functions 
$h$ such that 
\begin{equation}\label{eq:cas}
\nabla_j \nabla_i h=0.
\end{equation}
In flat coordinates $y^1,\dots,y^n$  for $g$  (i.e., such that $\Gamma^i_{jk} =0$), these functions take the form $h=\alpha_0 + \alpha_1 y^1 + \dots + \alpha_n y^n$. 
In view of  \eqref{eq:additional} 
they also satisfy  $\bar\nabla_j \bar \nabla_i  h = \lambda^s (\nabla_s h) \, \bar g_{ji}$  which immediately implies \eqref{eq:tashiro} with respect to  the metric $\bar g$. 
Thus,   the dimension of the space of solutions of \eqref{eq:tashiro} for $\bar g$  is at least $n+1$  and hence
$\bar g$ has  constant curvature. 
\end{proof} 

Lemma \ref{Lem:1b}   proves  the first part of the first statement of Theorem \ref{thm:main1}.  
By construction, the operator $L$ is Nijenhuis  (as an operator geodesically compatible with $g$), so the property (A) from the definition of Poisson-comptible metrics is automatically fulfilled.  It remains to check property (B), we will do it at the end of this section.

Now let us discuss the  second statement of Theorem \ref{thm:main1}.    Since $L_1$ and $L_2$ are geodesically compatible with $g$, the metrics $g_1=gL_1^{-1}$ and $g_2=gL_2^{-1}$ have constant curvature by Lemma \ref{Lem:1b}.   Recall that operators compatible with a fixed metric $g$ are all Nijenhuis and form a vector space (Nijenhuis pencil). In particular,  $\alpha L_1 + \beta L_2$ is a Nijenhuis operator for all $\alpha,\beta\in\R$. We, however, need to check this property for the operator 
$(g_1)^{-1} g_2 = L_1 L_2^{-1}$.  To that end, we prove  the following general fact from Nijenhuis geometry which is important on its own.

\begin{Lemma}\label{almost}
Consider a pair of non-degenerate Nijenhuis operators $L_1$ and $L_2$. Then $\alpha L_1 + \beta L_2$ is Nijenhuis operator for arbitrary constants $\alpha, \beta\in\R$ if and only if $L_1L_2^{-1}$ is Nijenhuis operator.
\end{Lemma}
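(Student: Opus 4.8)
The plan is to reduce both sides of the claimed equivalence to statements about torsions and then connect them through a single algebraic identity. First I would record the \emph{polarisation identity} for the Nijenhuis torsion: expanding $\mathcal N_{\alpha L_1+\beta L_2}$ and collecting powers of $\alpha,\beta$ gives
\[
\mathcal N_{\alpha L_1+\beta L_2}=\alpha^2\,\mathcal N_{L_1}+\alpha\beta\,\mathcal M_{L_1,L_2}+\beta^2\,\mathcal N_{L_2},
\]
where the symmetric \emph{mixed torsion} is
\[
\mathcal M_{L_1,L_2}(u,v)=(L_1L_2+L_2L_1)[u,v]-L_1[L_2u,v]-L_2[L_1u,v]-L_1[u,L_2v]-L_2[u,L_1v]+[L_1u,L_2v]+[L_2u,L_1v].
\]
Since $L_1,L_2$ are Nijenhuis by hypothesis, $\mathcal N_{L_1}=\mathcal N_{L_2}=0$, so the right-hand side is a genuine quadratic form in $(\alpha,\beta)$; hence $\alpha L_1+\beta L_2$ is Nijenhuis for all $\alpha,\beta$ if and only if $\mathcal M_{L_1,L_2}=0$. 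This turns the lemma into the equivalence $\mathcal M_{L_1,L_2}=0\iff\mathcal N_{L_1L_2^{-1}}=0$.

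The core step is then an identity connecting the torsion of the ratio to the mixed torsion. Writing $P=L_1L_2^{-1}$ and evaluating $\mathcal N_P$ on the frame obtained by the substitution $u=L_2w$, $v=L_2z$ (legitimate because $L_2$ is invertible and $\mathcal N_P$ is a tensor), one has $Pu=L_1w$, $Pv=L_1z$, so that
\[
\mathcal N_P(L_2w,L_2z)=[L_1w,L_1z]-L_1L_2^{-1}\bigl([L_1w,L_2z]+[L_2w,L_1z]\bigr)+(L_1L_2^{-1})^2[L_2w,L_2z].
\]
The crucial observation is that the two mixed brackets carry the \emph{same} prefactor $L_1L_2^{-1}$, so it is precisely their symmetric combination — the one appearing in $\mathcal M_{L_1,L_2}$ — that enters. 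Substituting the definitions of $\mathcal N_{L_1}$, $\mathcal N_{L_2}$ and $\mathcal M_{L_1,L_2}$ for the three bracket expressions, and checking that all remaining operator-algebra terms cancel (they do, using only $L_2^{-1}L_2=\Id$ and independently of whether any torsion vanishes), I expect to obtain the clean identity
\[
\mathcal N_{L_1L_2^{-1}}(L_2w,L_2z)=\mathcal N_{L_1}(w,z)-L_1L_2^{-1}\,\mathcal M_{L_1,L_2}(w,z)+(L_1L_2^{-1})^2\,\mathcal N_{L_2}(w,z).
\]

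With this identity the lemma follows at once. Using $\mathcal N_{L_1}=\mathcal N_{L_2}=0$ it reduces to $\mathcal N_{L_1L_2^{-1}}(L_2w,L_2z)=-L_1L_2^{-1}\,\mathcal M_{L_1,L_2}(w,z)$; since $L_1L_2^{-1}$ is invertible and $(w,z)\mapsto(L_2w,L_2z)$ is a bijection of pairs of vector fields, the left side vanishes identically if and only if $\mathcal M_{L_1,L_2}=0$, which combined with the first paragraph yields both implications simultaneously. The main obstacle is purely computational: correctly bookkeeping the many Lie-bracket terms in the expansion of $\mathcal N_P$ and verifying the total cancellation of the non-torsion part. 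The substitution $u=L_2w$ is what makes this tractable, since it removes all derivatives of $L_2^{-1}$ (replacing $Pu=L_1L_2^{-1}(L_2w)=L_1w$) and leaves only brackets of $L_1$- and $L_2$-images, which reorganise cleanly into the three torsion tensors.
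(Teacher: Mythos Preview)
Your approach is essentially the paper's: it too reduces the statement to a single identity expressing $\mathcal N_{L_1L_2^{-1}}(L_2v,L_2w)$ as a combination of $\mathcal N_{L_1}$, $\mathcal N_{L_2}$ and the Frolicher--Nijenhuis bracket $[[L_1,L_2]]$ (which is exactly $-\mathcal M_{L_1,L_2}$ in your notation), obtained via the same substitution $u=L_2v$, $w=L_2z$. One minor point: when you carry out the cancellation you will find the sign of the middle term is $+L_1L_2^{-1}\,\mathcal M_{L_1,L_2}$ rather than $-L_1L_2^{-1}\,\mathcal M_{L_1,L_2}$ (equivalently, the paper's identity reads $\mathcal N_{L_1}+ (L_1L_2^{-1})^2\mathcal N_{L_2}-\mathcal N_{L_1L_2^{-1}}(L_2\cdot,L_2\cdot)=L_1L_2^{-1}[[L_1,L_2]]$), but this does not affect the argument.
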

\begin{proof}
It is well known that if $L_1$ and $L_2$ are both Nijenhuis, then $\alpha L_1 + \beta L_2$ is Nijenhuis if and only if the Frolicher-Nijenhuis bracket  of $L_1$ and $L_2$ vanishes (see \cite{fn1} and e.g. \cite{Nijenhuis1}):
\begin{equation*}
    \begin{aligned}{}
    [[L_1 , L_2 ]] & = L_1 [L_2 v, w] + L_1 [v, L_2 w] + L_2 [L_1 v, w] + L_2 [v, L_1 w] - \\
    & - L_1 L_2 [v, w] - L_2 L_1 [v, w] - [L_1 v, L_2 w] - [L_2 v, L_1 w]    = 0. 
    \end{aligned}
\end{equation*}
Here $v, w$ are arbitrary vector fields and $[\, , \,]$ stands for standard Lie bracket of vector fields. Let us prove the following identity, which immediately implies the statement of the lemma:
$$
\mathcal N_{L_1} (v, w) + L_1L_2 ^{-1} L_1 L_2 ^{-1} \mathcal N_{L_2} (v, w) - \mathcal N_{L_1 L_2^{-1}} (L_2 v, L_2 w) = L_1 L_2 ^{-1} [[L_1 , L_2 ]](v, w).
$$
We have
\begin{equation*}
\begin{aligned}
    & \mathcal N_{L_1} (v, w) + L_1 L_2 ^{-1} L_1 L_2 ^{-1} \mathcal N_{L_2} (v, w) - \mathcal N_{L_1 L_2 ^{-1}} (L_2 v, L_2 w) = \\
    = & L_1 L_2 ^{-1} L_2 [L_1 v, w] + L_1 L_2 ^{-1} L_2 [v, L_1 w] - L_1 L_2 ^{-1} L_2 L_1 [v, w] - [L_1 v, L_1 w] + \\
    + & L_1 L_2 ^{-1} L_1 [L_2 v, w] + L_1 L_2 ^{-1} L_1 [v, L_2 w] - L_1 L_2 ^{-1} L_1 L_2 [v, w] - L_1 L_2 ^{-1} L_1 L_2 ^{-1}[L_2 v, L_2 w] - \\
    - & L_1 L_2 ^{-1}[L_1 v, L_2 w] - L_1 L_2 ^{-1}[L_2 v, L_1 w] + L_1 L_2 ^{-1} L_1 L_2 ^{-1} [L_2 v, L_2 w] + [L_1 v, L_1 w] = \\
    = & L_1 L_2 ^{-1} [[L_1 , L_2 ]](v, w),
\end{aligned}
\end{equation*}
as stated.
\end{proof}

To complete the proof of Theorem \ref{thm:main1}  (both items 1 and 2),  it remains to show that the curvature of the metric $gL^{-1}$ linearly depends on  $L$,  where $L$ is now understood as an element for the vector space (Nijenhuis pencil) of all operators geodesically compatible with $g$.  This  will be implied by the following Lemma:   

\begin{Lemma}\label{bolsadd1}
Let $g$ be a flat metric, $L$ be a geodesically compatible non-degenerate operator and 
 $K\in \R$ be the curvature of $gL^{-1}$.  

 Then, 
\begin{equation}\label{eq:hess_lambda}
\nabla^i \nabla_j \lambda  +  K \delta^i_j = 0.
\end{equation}
\end{Lemma}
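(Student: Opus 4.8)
The plan is to work in local flat coordinates for $g$, so that $\nabla_k = \partial_k$ and the components of $g$ are constant, and to exploit the geodesic compatibility equation \eqref{eq:geodcomp}, which in these coordinates reads $\partial_k L^{ij} = \lambda^i\delta^j_k + \lambda^j\delta^i_k$. First I would differentiate this once more and use that partial derivatives commute. Writing $\mu^i_k := \partial_k\lambda^i = \nabla_k\lambda^i$, the equality $\partial_l\partial_k L^{ij} = \partial_k\partial_l L^{ij}$ gives the algebraic identity
\[
\mu^i_l\,\delta^j_k + \mu^j_l\,\delta^i_k = \mu^i_k\,\delta^j_l + \mu^j_k\,\delta^i_l .
\]
Contracting the indices $j$ and $k$ collapses this to $n\,\mu^i_l = (\operatorname{tr}\mu)\,\delta^i_l$, so that
\[
\nabla_k\lambda^i = c\,\delta^i_k, \qquad c := \tfrac1n\,\nabla_s\lambda^s ,
\]
i.e. the Hessian of $\lambda$ is at each point a multiple of the identity. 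Since $\nabla_k\lambda^i = g^{is}\nabla_k\nabla_s\lambda = \nabla^i\nabla_k\lambda$ by symmetry of the Hessian, the desired identity \eqref{eq:hess_lambda} is exactly the assertion that $c = -K$.

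Second, to identify the constant, I would compute the curvature of $\bar g$ directly from Lemma \ref{Lem:0}. In flat coordinates $\bar\Gamma^i_{jk} = -\lambda^i\bar g_{jk}$, and differentiating $\bar g^{ij}=L^{ij}$ via \eqref{eq:geodcomp} yields $\partial_k\bar g_{lj} = -\bar\lambda_l\,\bar g_{jk} - \bar\lambda_j\,\bar g_{lk}$, where $\bar\lambda_l := \bar g_{ls}\lambda^s$. Substituting these together with $\partial_k\lambda^i = c\,\delta^i_k$ into the standard formula for $\bar R^i_{\ jkl}$, I expect every term quadratic in $\lambda$ (those of the form $\lambda^i\bar\lambda$) to cancel between the $\partial\bar\Gamma$ part and the $\bar\Gamma\bar\Gamma$ part, leaving
\[
\bar R^i_{\ jkl} = -c\,\bigl(\delta^i_k\,\bar g_{jl} - \delta^i_l\,\bar g_{jk}\bigr).
\]
Comparing with the constant-curvature expression $\bar R^i_{\ jkl} = K\bigl(\delta^i_k\,\bar g_{jl} - \delta^i_l\,\bar g_{jk}\bigr)$, which is legitimate by Lemma \ref{Lem:1b}, forces $c = -K$ (and in particular confirms that $c$ is constant). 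This is precisely \eqref{eq:hess_lambda}.

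I expect the bookkeeping in this curvature computation to be the main obstacle: one must track the antisymmetrisation in $k,l$ carefully, verify that the symmetric $\lambda^i\bar\lambda$ contributions indeed cancel, and keep the sign convention for $\bar R$ consistent with the convention for \emph{constant curvature} $K$ used in the paper, so as to land on $c=-K$ rather than $c=+K$. A computational alternative that avoids the curvature tensor is to feed the $g$-linear functions (the Casimirs of the flat metric $g$) into \eqref{eq:additional}: they become concircular scalar fields for $\bar g$, and the standard integrability relation for such fields on a space of constant curvature pins down the proportionality constant as $-K$; the direct curvature computation above, however, appears shorter and fully self-contained.
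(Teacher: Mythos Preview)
Your argument is correct. The integrability trick $\partial_l\partial_k L^{ij}=\partial_k\partial_l L^{ij}$ in flat coordinates gives exactly $n\mu^i_l=(\operatorname{tr}\mu)\delta^i_l$, and the direct computation of $\bar R^i{}_{jkl}$ from $\bar\Gamma^i_{jk}=-\lambda^i\bar g_{jk}$ does produce the constant-curvature form with coefficient $-c$; the quadratic terms cancel as you predicted, and matching against Lemma~\ref{Lem:1b} (and Remark~\ref{rem:convention}) fixes $c=-K$.

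The paper's route is structurally the same two-step scheme but uses different tools. For the first step it invokes the general commutator identity \eqref{magic} from \cite{BolsKioMatv}, $[R(v,u),L]=[u\otimes g(v)-v\otimes g(u),\,M]$, which for flat $g$ immediately forces $M=\rho\,\Id$. For the second step it applies the \emph{same} identity to the pair $(\bar g,L)$ (geodesically compatible by Corollary~\ref{cor:1}), combined with the relation $\bar M=\rho L+\lambda^s\lambda_s\,\Id$ from \eqref{eq:additional}, and reads off $K+\rho=0$. So the paper never computes $\bar R$ componentwise; it trades your explicit curvature calculation for a second call to an external identity. Your approach is more elementary and self-contained (no appeal to \cite{BolsKioMatv}), and as a bonus it reproves Lemma~\ref{Lem:1b} en route; the paper's proof is coordinate-free and shorter once \eqref{magic} is in hand, and generalises more readily to non-flat $g$.
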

\begin{proof}
This identity will be  derived from the following algebraic relation between the curvature tensor $R$ of a metric $g$ and an operator $L$ geodesically compatible to it, see e.g. \cite[Eq.  (13) and Theorem 7]{BolsKioMatv}: 
\begin{equation}\label{magic}
[ R(v,u), L] = [u\otimes g(v) - v\otimes g(u) ,  M], \quad \mbox{where }M^i_j = \nabla^{i} \nabla_{j} \lambda.
\end{equation}
Here $u$ and $v$ are arbitrary tangent vectors and $[~,~]$ denotes the standard matrix commutator $[A,B]=AB-BA$.  

W.l.o.g. we will assume that $L$ is not proportional to the identity (recall that if $L=f(x)\Id$, then $f(x)=\mathrm{const}$  \cite{Weil} and the statement becomes trivial). Since in our case $g$ is flat, then $R(v,u)=0$ implying $M=\rho\,\Id$ (as $M$ commutes with any matrix of the form $u\otimes g(v) - v\otimes g(u)$).  To show that $\rho = -K$, we apply \eqref{magic} once again for $\bar g=gL^{-1}$ and $L$ (as they are still geodesically compatible by Corollary \ref{cor:1}):
\begin{equation}\label{magic2}
[\bar R(v,u), L] = [u\otimes \bar g(v) - v\otimes \bar g(u),  
\bar M],\quad  \bar M^i_j = \bar\nabla^i\bar\nabla_j \lambda. 
\end{equation}
Since  $\bar g$ is of constant curvature $K$, then $\bar R(v,u) = K (v\otimes \bar g(u) - u\otimes \bar g(v))$.  On the other hand, formula  \eqref{eq:additional} gives:
$$
\bar \nabla^i \bar \nabla_j \lambda = L^i_s  \nabla^s  \nabla_j \lambda + \lambda_s\lambda^s\delta^i_j\quad\mbox{or, in our case,}\quad \bar M = \rho L +\lambda^s\lambda_s \Id.
$$
Hence, \eqref{magic2}  can be rewritten in the form
$$
[v\otimes \bar g(u) - u\otimes \bar g(v),  (K +  \rho) L ]=0 \quad \mbox{for all  $u$ and $v$}, 
$$
implying $K+\rho = 0$, that is $M^i_j= \nabla^{i} \nabla_{j} \lambda = -K \delta^i_j$, as required. 
\end{proof} 

Now consider two operators $L_1$ and $L_2$ compatible with $g$ and apply Lemma \ref{bolsadd1}  for the linear combination $L = \alpha L_1 + \beta  L_2$. 
We denote the curvatures of $L$, $L_1$ and $L_2$ by $K$, $K_1$, $K_2$, and similarly $\lambda = \frac{1}{2} \tr L$, $\lambda_1 = \frac{1}{2} \tr L_1$, $\lambda_2 = \frac{1}{2} \tr L_2$. Then Lemma \ref{bolsadd1} gives 
 $$
K \delta^i_j  = \nabla^i \nabla_j \lambda = \nabla^i \nabla_j \left(\alpha \lambda_1 +  \beta \lambda_2\right) = 
(\alpha K_1 + \beta K_2)\delta^i_j.
$$
Hence, $K = \alpha K_1 + \beta  K_2$, as required.  This completes the proof of the second statement of Theorem \ref{thm:main1}.  For the first statement, the end of proof is exactly the same. One just need to replace $L_1$ and $L_2$ with $\Id$ and $L$. Theorem \ref{thm:main1} is proved.

\section{Proof of Theorem \ref{thm:main2}} \label{sec:4}

We consider a flat metric $g$ and a non-degenerate operator $L$ geodesically compatible to it.  From Theorem \ref{thm:main1} we  already know 
that $\bar g = gL^{-1}$ is a metric of constant curvature  $K$. 

Recall that Casimir functions $h$ for $g$ are those satisfying the  equation
\begin{equation}
\label{Casforg}
\nabla^i\nabla_j h = 0
\end{equation}
whereas the equation for Casimirs of $\bar g=gL^{-1}$ is 
\begin{equation}
\label{Casforgbar}
\bar \nabla^i \bar \nabla_j h + K h \,\delta^i_j = 0,
\end{equation}

In view of \eqref{eq:additional},  the latter equation can also be rewritten in the form
\begin{equation}
\label{Casforgbar2}
\bar g^{s i} \nabla_s\nabla_j h + (\lambda^s \nabla_s h + Kh) \, \delta^i_j = 0
\end{equation}

We first show that the metrics $g$ and $\bar g$ have ``many'' common Casimirs.  The next Lemma is equivalent to the first statement of Theorem \ref{thm:main2}.

\begin{Lemma}\label{lem:bols2}
The vector space of Casimirs of $\bar g$ contains a subspace of codimension one that consists of Casimirs of $g$.  
\end{Lemma}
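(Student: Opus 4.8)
The plan is to parametrise the common Casimirs explicitly. Since $g$ is flat, a Casimir of $g$ is a function $h$ with $\nabla_i\nabla_j h=0$, and these form an $(n+1)$-dimensional space; I will determine which of them happen to be Casimirs of $\bar g$ as well. Feeding such an $h$ into the rewritten Casimir equation \eqref{Casforgbar2}, the term $\bar g^{si}\nabla_s\nabla_j h$ drops out because $\nabla_s\nabla_j h=0$, and the whole equation collapses to the single scalar condition $\phi:=\lambda^s\nabla_s h + K h = 0$. Thus, among Casimirs of $g$, exactly those with $\phi\equiv 0$ are also Casimirs of $\bar g$.

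The key step is to show that $\phi$ is in fact a \emph{constant}, not merely some function, for every Casimir $h$ of $g$. Differentiating gives $\nabla_k\phi = (\nabla_k\lambda^s)(\nabla_s h) + \lambda^s\nabla_k\nabla_s h + K\nabla_k h$. The middle term vanishes since $\nabla_k\nabla_s h=0$, and by Lemma \ref{bolsadd1} the Hessian of $\lambda$ equals $-Kg$, that is $\nabla_k\lambda^s = -K\delta^s_k$. Hence $\nabla_k\phi = -K\nabla_k h + K\nabla_k h = 0$, so $\phi$ is constant. I expect this to be the crux of the argument: once one invokes $\nabla^i\nabla_j\lambda = -K\delta^i_j$ from Lemma \ref{bolsadd1}, the position-dependent part of $\lambda^s\nabla_s h$ cancels exactly against $Kh$, leaving a constant. (Equivalently, in flat coordinates where $\lambda^i=b^i-Kx^i$, one checks directly that the $x$-dependent terms of $\lambda^s\nabla_s h+Kh$ cancel for affine $h$.)

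Consequently $h\mapsto\phi(h)$ defines a linear functional on the $(n+1)$-dimensional space of Casimirs of $g$, and every $h$ in its kernel is a common Casimir of $g$ and $\bar g$. This kernel has dimension at least $n$, and all of its elements are Casimirs of $\bar g$; since the Casimirs of $\bar g$ form an $(n+1)$-dimensional space, the kernel is a subspace of codimension at most one inside it, all of whose elements are Casimirs of $g$, which yields the claim. (In the degenerate situation where $\phi\equiv 0$ for every $h$, all $n+1$ Casimirs of $g$ are Casimirs of $\bar g$ and the conclusion holds a fortiori.)
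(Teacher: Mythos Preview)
Your proof is correct but takes a different route from the paper's. The paper starts from an arbitrary Casimir $h$ of $\bar g$, rewrites \eqref{Casforgbar2} as $\nabla_s\nabla_j h=\rho\,(L^{-1})^r_j g_{rs}$ with $\rho=-(\lambda^s\nabla_s h+Kh)$, differentiates, and after some work using the geodesic compatibility equation \eqref{eq:geodcomp} and the Nijenhuis identity \eqref{propNij} obtains $\rho=c_h/\sqrt{\det L}$ for a constant $c_h$; the linear functional $h\mapsto c_h$ on the Casimirs of $\bar g$ then has kernel consisting precisely of Casimirs of $g$. You go in the opposite direction: starting from a Casimir $h$ of $g$, the $\bar g$-Casimir condition collapses to the scalar equation $\phi:=\lambda^s\nabla_s h+Kh=0$, and a one-line computation using Lemma~\ref{bolsadd1} shows $\phi$ is constant; the kernel of the linear functional $h\mapsto\phi$ on the Casimirs of $g$ then yields at least $n$ common Casimirs, hence a codimension-one subspace of the $(n{+}1)$-dimensional space of Casimirs of $\bar g$.

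Your argument is more elementary: it avoids the differentiation step and the Nijenhuis identity \eqref{propNij}, relying only on Lemma~\ref{bolsadd1}. What the paper's approach buys, however, is the explicit formula $\nabla^i\nabla_j h=\dfrac{c_h}{\sqrt{\det L}}(L^{-1})^i_j$ valid for \emph{every} Casimir $h$ of $\bar g$ (not just those coming from $g$), and this formula is exactly what is used immediately afterwards (see \eqref{eq:Casimirfinal1}) to compute the commuting Hamiltonian flows in item~3 of Theorem~\ref{thm:main2}. Your route proves the lemma cleanly but does not furnish this byproduct.
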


\begin{proof}  Let $h$ be a Casimir of $\bar g=gL^{-1}$.  Taking into account that $\bar g^{s i} = L^{s i}$,  we can rewrite \eqref{Casforgbar2} in the form
\begin{equation} 
\label{eq:Casimir1}
 L^{s i} \nabla_s\nabla_j h = \rho\,\delta^i_j \quad\mbox{or, equivalently,} \quad
 \nabla_s\nabla_j h = \rho (L^{-1})^r_j g_{rs} 
\end{equation}
with $\rho = -\lambda^s \nabla_s  h - Kh$.  Thinking of $\rho$ as an unknown function, we will derive some additional conditions for it.

Taking  $\nabla_k$ derivative  of the first formula in \eqref{eq:Casimir1}  and applying \eqref{eq:geodcomp} we obtain:
$$
(\lambda^s \delta^i_k + \lambda^i\delta^s_k) \nabla_s \nabla_j h  + L^{s i} \nabla_k\nabla_s\nabla_j h = \nabla_k \rho \, \delta^i_j.
$$ 
Now subtract the same relation with indices $j$ and $k$ interchanged,  use the fact that  $\nabla_k\nabla_s\nabla_j h =\nabla_j\nabla_s\nabla_k h$ (since $g$ is flat) and substitute $\nabla_i\nabla_j h$ from the second formula in \eqref{eq:Casimir1}:
$$
\rho \lambda_\beta (L^{-1})^\beta_j \, \delta^{i}_k - \rho \lambda_\beta (L^{-1})^\beta_k \, \delta^{i}_j = \nabla_k \rho \ \delta^i_j - \nabla_j\rho \ \delta^i_k.
$$
This implies $\nabla_k \rho = - \rho \, \lambda_\beta (L^{-1})^\beta_k$ or, equivalently, in more invariant way: 
$$
\mathrm{d}\rho = - \rho \, (L^{-1})^* \mathrm{d} \lambda.
$$

  If $\rho\equiv 0$, then \eqref{eq:Casimir1} becomes $\nabla_i\nabla_j h=0$, i.e. $h$ itself is a Casimir of $g$. Otherwise, 
  we use the following general property of Nijenhuis operators \cite[Proposition 2.2]{Nijenhuis1}
  \begin{equation}
  \label{propNij}
  L^* \mathrm{d} (\det L) = 2\det L \, \mathrm{d}\lambda,   
  \end{equation}
  which gives
  $
\frac{\mathrm{d} \rho}{\rho} 
= -  (L^{-1})^* \mathrm{d} \lambda  = - \frac{1}{2} \frac{ \mathrm{d} ({\det L})}{\det L}
$ 
and, finally, 
\begin{equation} \label{eq:Casimir3}
 \rho =  c_h \tfrac{1}{\sqrt{\det L}},  
\end{equation}
where $c_h$ is a constant function depending only on the choice of the Casimir function $h$ of $\bar g$.  In other words,  we obtain a natural map $h \mapsto c_h$ from the vector space of the Casimirs of $\bar g$ to $\R$.  This map is obviously linear so that the common Casimirs $h$ of $\bar g$ and $g$ are defined by one single linear relation $c_h=0$, which is equialent to the statement of the Lemma.  \end{proof}

Our next goal is to verify the second statement of Theorem \ref{thm:main2}.	

\begin{Lemma}  \label{Lem:1}  
Assume that $(g,L) $ is a geodesically compatible pair, the metric $g$ is flat and $\det L>0$. 
Then $ \sqrt{\det L}$ is a  Casimir of the Poisson structure corresponding to $gL^{-1}$. 
\end{Lemma}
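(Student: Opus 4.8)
The plan is to verify directly that $h=\sqrt{\det L}$ solves the Casimir equation for $\bar g=gL^{-1}$. By Lemma \ref{Lem:1b} the metric $\bar g$ has constant curvature $K$ (here flatness of $g$ is essential), so the Casimir condition is \eqref{Casforgbar2}, which, since $\bar g^{si}=L^{si}$, is the same as the relation \eqref{eq:Casimir1} already isolated in the proof of Lemma \ref{lem:bols2}: namely $L^{si}\nabla_s\nabla_j h=\rho\,\delta^i_j$ with $\rho=-(\lambda^s\nabla_s h+Kh)$. So it suffices to produce this identity for the specific choice $h=\sqrt{\det L}$.

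The first, and conceptually decisive, step is to convert the algebraic identity \eqref{propNij}, $L^*\mathrm d(\det L)=2\det L\,\mathrm d\lambda$, into a first-order relation for $h$. Writing $\det L=h^2$ and dividing by $2h$ (legitimate since $\det L>0$ forces $h>0$), I expect to obtain the clean relation $L^i_j\nabla_i h=h\,\lambda_j$, i.e. $L^*\mathrm dh=h\,\mathrm d\lambda$. This is the link that ties $\sqrt{\det L}$ to $\lambda=\tfrac12\tr L$ and is what makes the whole computation work.

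The second step is to differentiate $L^i_j\nabla_i h=h\lambda_j$ covariantly with $\nabla_k$. Using the geodesic-compatibility equation \eqref{eq:geodcomp} to read off $\nabla_k L^i_j=\lambda^i g_{kj}+\lambda_j\delta^i_k$, the term $\lambda_j\nabla_k h$ cancels against its counterpart from the right-hand side, leaving $L^i_j\nabla_k\nabla_i h=h\,\nabla_k\lambda_j-g_{kj}\,\lambda^i\nabla_i h$. At this point I invoke Lemma \ref{bolsadd1} in the lowered form $\nabla_k\lambda_j=\nabla_k\nabla_j\lambda=-K g_{kj}$ coming from \eqref{eq:hess_lambda}; substituting collapses the right-hand side to $-(Kh+\lambda^i\nabla_i h)\,g_{kj}=\rho\,g_{kj}$. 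Raising the free index $j$ then gives exactly \eqref{eq:Casimir1}, hence the Casimir equation \eqref{Casforgbar2}, completing the proof.

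I do not anticipate a serious obstacle here: the entire argument is a single covariant differentiation assembled from three prior ingredients. The genuine content was front-loaded into Lemma \ref{bolsadd1} (the Hessian identity \eqref{eq:hess_lambda}, where flatness of $g$ is used), while \eqref{propNij} and \eqref{eq:geodcomp} are available off the shelf. The only points demanding care are the index bookkeeping — correctly lowering $\nabla_k L^i_j$ from \eqref{eq:geodcomp} and matching $L^{si}=\bar g^{si}$ — and the mild hypothesis $\det L>0$, which guarantees $h>0$ so that the division by $2h$ turning \eqref{propNij} into $L^*\mathrm dh=h\,\mathrm d\lambda$ is valid.
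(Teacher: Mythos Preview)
Your proposal is correct and uses essentially the same approach as the paper: both arguments combine the Nijenhuis identity \eqref{propNij} (in the form $L^{si}\nabla_s h=\lambda^i h$ for $h=\sqrt{\det L}$), the compatibility equation \eqref{eq:geodcomp}, and the Hessian identity from Lemma~\ref{bolsadd1}. The only organisational difference is that the paper first rewrites the Casimir condition for \emph{arbitrary} $h$ into the equivalent form $\nabla_j(L^{si}\nabla_s h)-\lambda^i\nabla_j h+Kh\,\delta^i_j=0$ and then substitutes $L^{si}\nabla_s h=\lambda^i h$, whereas you start from $L^i_j\nabla_i h=h\lambda_j$ and differentiate; these are the same product-rule computation carried out in reverse order and with a different index placement.
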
 
	
\begin{proof} We first rewrite relation \eqref{Casforgbar2}  for Casimirs of $\bar g= g L^{-1}$ in a slightly different way. Taking into account that $\bar g^{s i} = L^{s i}$ and using \eqref{eq:geodcomp},  we  obtain the following expression for the l.h.s. of \eqref{Casforgbar2}:
$$
\begin{aligned}
&\bar g^{s i} \nabla_s \nabla_j h + (\lambda^s\nabla_s h + Kh) \delta^i_j = \\
&L^{s i} \nabla_j\nabla_s h + (\lambda^s\nabla_s h + Kh) \delta^i_j = \\
&\nabla_j (L^{s i} \nabla_s h)  - (\nabla_j L^{s i}) \nabla_s h  + (\lambda^s\nabla_s h + Kh) \delta^i_j = \\
&\nabla_j ( L^{s i} \nabla_s h)  - (\lambda^s \delta^i_j + \lambda^i \delta^s_j ) \nabla_s h  + (\lambda^s \nabla_s h + Kh) \delta^i_j = \\
&\nabla_j ( L^{s i} \nabla_s h)  -  \lambda^i   \nabla_j h  +  Kh \delta^i_j 
\end{aligned}
$$

Thus, we need to verify that $h=\sqrt{\det L}$ satisfies 
\begin{equation}
\label{Casforgbar20}
\nabla_j ( L^{s i} \nabla_s h)  -  \lambda^i   \nabla_j h  +  Kh \delta^i_j = 0
\end{equation}
Once again we use the general property \eqref{propNij} of Nijenhuis operators that gives
$L^s_r \nabla_s \sqrt{\det L} = \lambda_r \sqrt{\det L} $ or, equivalently,   $L^{s i} \nabla_s \sqrt{\det L} = \lambda^i  \sqrt{\det L} $. Hence, for $h=\sqrt{\det L}$ we have:
$$
\begin{aligned}
 \nabla_j \left( L^{s i} \nabla_s \sqrt{\det L}\right)  -  \lambda^i   \nabla_j \sqrt{\det L}  +  K\sqrt{\det L} \,\delta^i_j &= \\ 
 \nabla_j (\lambda^i  \sqrt{\det L})  -  \lambda^i   \nabla_j\sqrt{\det L}   +  K \sqrt{\det L} \,\delta^i_j  &= \\
\lambda^i  \nabla_j \sqrt{\det L}   + (\nabla_j \lambda^i)  \sqrt{\det L}  -  \lambda^i   \nabla_j\sqrt{\det L}   +  K \sqrt{\det L}\, \delta^i_j &= \\
 \bigl(\nabla_j \lambda^i      +  K \delta^i_j  \bigr) \sqrt{\det L} &=0,
\end{aligned}
$$
as required (at the very last step we used Lemma \ref{bolsadd1}).
\end{proof}

The third statement of Theorem \ref{thm:main2} now immediately follows from Lemmas \ref{lem:bols2} and \ref{Lem:1}.  Indeed for any Casimir of $\bar g$, and in particular for $h=\sqrt{\det L}$,  formulas \eqref{eq:Casimir1}  and \eqref{eq:Casimir3} give:
\begin{equation} \label{eq:Casimirfinal1}
 \nabla^i \nabla_j h =   \frac{c_h}{\sqrt{\det (L)}}  (L^{-1})^{i}_{j}.        
\end{equation}
It remains to replace $L$ with the linear combination $-\alpha L + \Id$.

\section{Proof of Theorem \ref{thm:main3} and Corollary \ref{cor:compatible} and more properties of the related pencils}
\label{sec:proof of theorem 3} 

\begin{proof}[Proof of Theorem \ref{thm:main3}] Let $x^1,\dots, x^n$ be flat coordinates for the metric $g$. In these coordinates, the Christoffel symbols vanish so that the  covariant derivative coincides with the usual one. Then 
formula \eqref{eq:hess_lambda} reads 
\begin{equation}\label{corr1}
\tfrac{\partial  {\lambda}^i}{\partial x^j} = - K \delta^i_j
\end{equation}
with a constant $K$. Hence, 
\begin{equation} \label{corr2} \lambda^i=b^i- K x^i  \end{equation}  for some constants $b^1,\dots,b^n$.

Next, using  the fact that $\nabla_k= \frac{\partial}{\partial x^k}$, we obtain from \eqref{eq:geodcomp}: 
\begin{equation} \label{corr3}  
     \frac{\partial }{\partial x^k} L^{ij} = \lambda^i \delta^j_k + \lambda^j \delta^i_k \stackrel{\eqref{corr2}}{=}(b^i- K x^i ) \delta^j_k + (b^j- K x^j ) \delta^j_k. 
\end{equation}
This system of equations  implies   $L^{ij} = a^{ij} + b^i x^j + b^j x^i - K x^i x^j$  (with constants $a^{ij}$ that are necessarily symmetric with respect to $i$ and $j$) as we claimed.  Clearly every such $L^{ij}$ satisfies \eqref{corr3} so it is geodesically compatible with  $g$. 

To finish the proof, recall that near the points where $L^{ij}$ is non-degenerate, 
 the metric $gL^{-1}$ has constant curvature $K$  by Lemma \ref{bolsadd1}, and that $L^{ij}= g^{is}L_s^j$ is the contravariant inverse of $\bar g=gL^{-1}$. Theorem  \ref{thm:main3} is proved. \end{proof}

\begin{proof}[Proof of Corollary \ref{cor:compatible}]
As above, $g$ denotes a flat metric and $L$ is an operator geodesically compatible with $g$.
 Let $P$ and $Q$ be polynomials such that $P(L)$ and $Q(L)$ are non-degenerate.  The operator connecting the metrics $gP(L)^{-1}$ and $gQ(L)^{-1}$ is a rational function of the Nijenhuis operator $L$ and, therefore, is Nijenhuis also by \cite[Proposition 3.1]{Nijenhuis1}, so the first condition from the definition of compatibility is fulfilled. We need to  prove  the  ``curvature-additivity'' condition \eqref{eq:additivitycurvature}. 

In order to do it, we observe that in local coordinates from Theorem \ref{thm:main3},  the curvature tensors  of $gP(L)^{-1}$ and $gQ(L)^{-1}$ are given by rational functions in 
\begin{equation} N:= n + \frac{(n+1)n}{2} +  \frac{(n+1)(n+2)}{2}
\end{equation} 
variables: the first $n$ variables are $x^1,\dots,x^n$, the next $\tfrac{(n+1)n}{2}$ are $g^{ij}$ and the last   $\tfrac{(n+1)(n+2)}{2}$
are the data $(a^{ij}, b^i, K)$.  The ``curvature-additivity'' condition \eqref{eq:additivitycurvature} is then equivalent to 
a system of algebraic relations on these  $N$ variables.  If it is fulfilled in an non-empty open subset of  $\mathbb{R}^N$, it is fulfilled everywhere. 
Let us explain why such a subset exists. 

Recall that by \cite{Mokhov1,Mokhov} if two metrics $g$ and $\bar g$ are related by a Nijenhuis operator $L= L^i_j = \bar g^{is } g_{sj}$  with $n = \dim \ M$ distinct real  eigenvalues, then the  curvature tensors of $g$ and $\bar g$ satisfy  \eqref{eq:additivitycurvature} automatically\footnote{Conversely, the ``curvature-additivity'' condition \eqref{eq:additivitycurvature} becomes essential in the case of operators with multiple eigenvalues, which do appear in our setting, so that the direct statement from \cite{Mokhov1,Mokhov} is not formally applicable here.}.  This statement also follows from our proof of Theorem \ref{thm:main4}, see Remark  \ref{rem:last} below.   Next, notice that \eqref{ex:LC} gives us an example of $L$ having  $n$ different real eigenvalues. More precisely, let $\check g_{ij}, \check a^{ij}, \check b^i$ and $\check K$ be the data from Theorem \ref{thm:main3} corresponding to the example  \eqref{ex:LC} and at some point $\check x \in \mathbb{R}^n$ the corresponding $L$ has $n$ different real eigenvalues.  At every point of a small neighborhood  of  $(\check x^i, \check g_{ij}, \check a^{ij}, \check b^i,  \check K)$ in 
$\mathbb{R}^N$,  the corresponding operator $L$ still satisfies the property  that its eigenvalues are real and different. Using the above mentioned result from \cite{Mokhov1,Mokhov}, we conclude that in this neighbourhood,  \eqref{eq:additivitycurvature} is fulfilled which implies that it is fulfilled identically for all $L$ and at all points whenever it makes sense. 
\end{proof}

We conclude this section with discussing some more properties of the pencil $\mathcal L_g$ of Nijenhuis operators $L^i_j$ geodesically equivalent to  $g$(equivalently, pencil $\mathcal L$ of Poisson compatible contravariant metrics $L^{ij}$) from Theorem \ref{thm:main3}.  Notice that the relation between these two pencils can be written as $\mathcal L_g = \mathcal L g^{-1}$.  Also notice that $\mathcal L$ does not depend on the choice of $g$ whereas $L_g$ does.

First observe that $\dim\mathcal L_g$ equals $\frac{n(n+1)}{2} + n + 1 =   \frac{(n+2)(n+1)}{2}$, which is the dimension of the space of  symmetric $(n+1)\times (n+1)$-matrices. As the next theorem shows, this is not a coincidence: to each $L^{ij}$ of form \eqref{norm}  one can uniquely assign such a matrix by a  natural geometric procedure.

\begin{proposition} \label{lift} 
 Consider 
 $\mathbb{R}_{>0}\times \mathbb{R}^n$ (the coordinate on $\mathbb{R}_{>0}$  will be  denoted  by $x^0$ and those on $\mathbb{R}^n$ will be  $x^1,\dots,x^n$) with the symmetric affine  connection  $\hat \nabla =\left(  \hat  \Gamma^\alpha_{\beta \gamma}\right)$ (with $\alpha, \beta, \gamma \in \{0,\dots,n\}$)
 such that  the only non-zero Christoffel symbols are as follows:
\begin{equation} \label{gamma}\hat \Gamma^i_{j0}= \hat\Gamma^i_{0j}= \delta^i_j \frac{1}{x^0}  \textrm{ \  for $i,j=1,\dots,n$.} \end{equation}

Then $\hat\nabla$ is flat. Moreover, for any constants  $(a^{ij}, b^i, K)$ with 
$i,j =1,\dots,n$ and $a^{ij}=a^{ji}$, the symmetric contravariant $(2,0)$ tensor $A^{\alpha\beta}$ on $\mathbb{R}_{>0}\times \mathbb{R}^n$ 
  given by
\begin{equation}\label{lift1} 
 A^{ij} = \frac{1}{(x^0)^2} L^{ij},  \ \    A^{0i}=  -\frac{b^{i}- K x^i}{x^0}, \ \  A_{00}= -K  \quad (i, j = 1,\dots, n), 
\end{equation} 
 is parallel with respect to $\hat\nabla$.
Furthermore, in the ``new'' coordinates 
$$
y^0= x^0, \ \  y^i= x^0 x^i \quad   (i=1,\dots,n),  
$$
the Christoffel symbols of this connection vanish.  In these coordinates, the  matrix of $A$ becomes constant with components 
\begin{equation}\label{lift2} 
A^{ij} =  a^{ij} ,  \ \   A^{0i}= A^{i0}=  -b^{i},    \ \ A_{00}= -K \quad (i, j = 1,\dots, n).  
\end{equation}
\end{proposition}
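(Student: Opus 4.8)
The plan is to reduce all three assertions to the single change of coordinates prescribed in the statement, $y^0=x^0$ and $y^i=x^0x^i$. Once I verify that in the $y$-chart the Christoffel symbols of $\hat\nabla$ vanish and the components of $A$ are constant, everything follows simultaneously: a connection whose Christoffel symbols vanish in some chart is flat, a tensor with constant components in such a chart is automatically $\hat\nabla$-parallel, and those constant values are precisely the components \eqref{lift2}.

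First I would record the change of frame. From $y^0=x^0$, $y^i=x^0x^i$ one has $x^0=y^0$ and $x^i=y^i/y^0$, hence for the coordinate vector fields $\tfrac{\partial}{\partial y^j}=\tfrac{1}{x^0}\tfrac{\partial}{\partial x^j}$ and $\tfrac{\partial}{\partial y^0}=\tfrac{\partial}{\partial x^0}-\tfrac{x^i}{x^0}\tfrac{\partial}{\partial x^i}$ (sum over $i$). I would then check directly that $\hat\nabla_{\partial/\partial y^\alpha}\tfrac{\partial}{\partial y^\beta}=0$ for every $\alpha,\beta\in\{0,\dots,n\}$, using only the single nonzero rule $\hat\nabla_{\partial/\partial x^0}\tfrac{\partial}{\partial x^j}=\hat\nabla_{\partial/\partial x^j}\tfrac{\partial}{\partial x^0}=\tfrac{1}{x^0}\tfrac{\partial}{\partial x^j}$ coming from \eqref{gamma}, together with the Leibniz rule applied to the coefficients $1/x^0$ and $x^i/x^0$. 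In each case the derivative of the coefficient cancels the connection term exactly; the mechanism is already visible in the building block $\hat\nabla_{\partial/\partial x^0}\bigl(\tfrac{1}{x^0}\tfrac{\partial}{\partial x^k}\bigr)=-\tfrac{1}{(x^0)^2}\tfrac{\partial}{\partial x^k}+\tfrac{1}{x^0}\cdot\tfrac{1}{x^0}\tfrac{\partial}{\partial x^k}=0$. This proves both that $\hat\nabla$ is flat and the ``Furthermore'' claim about the $y$-chart.

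Next I would transform the tensor. As a $(2,0)$-tensor, $A$ has $y$-components $\tilde A^{\alpha\beta}=\tfrac{\partial y^\alpha}{\partial x^\mu}\tfrac{\partial y^\beta}{\partial x^\nu}A^{\mu\nu}$, which I would evaluate block by block using $\tfrac{\partial y^0}{\partial x^0}=1$, $\tfrac{\partial y^i}{\partial x^0}=x^i$, $\tfrac{\partial y^i}{\partial x^j}=x^0\delta^i_j$ and the given $A^{00}=-K$, $A^{0i}=-(b^i-Kx^i)/x^0$, $A^{ij}=(x^0)^{-2}L^{ij}$ with $L^{ij}=a^{ij}+b^ix^j+b^jx^i-Kx^ix^j$. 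The powers of $x^0$ and the linear factors $x^i$ produced by the Jacobian are exactly what kills the $x$-dependence: one gets $\tilde A^{00}=-K$ immediately, $\tilde A^{0i}=x^i(-K)+x^0A^{0i}=-b^i$, and in $\tilde A^{ij}$ the terms quadratic and linear in $x$ cancel, leaving only $a^{ij}$. This is the constant matrix \eqref{lift2}, and constancy in the flat $y$-chart yields the $\hat\nabla$-parallelism of $A$.

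The argument is a pure verification with no conceptual obstacle; the only point demanding care is the index bookkeeping in the $\tilde A^{ij}$ block, where the four contributions $A^{00}$, $A^{0j}$, $A^{i0}$, $A^{ij}$ (weighted by $x^ix^j$, $x^ix^0$, $x^0x^j$, $(x^0)^2$ respectively) must be collected and shown to cancel down to $a^{ij}$. This cancellation is precisely the compatibility built into \eqref{gamma} and \eqref{lift1}, so the proposition records the fact that the prescribed coordinate change simultaneously linearises the connection and the tensor. For the flatness statement alone one could alternatively substitute \eqref{gamma} into the curvature formula $\hat R^\alpha{}_{\beta\gamma\delta}=\partial_\gamma\hat\Gamma^\alpha_{\delta\beta}-\partial_\delta\hat\Gamma^\alpha_{\gamma\beta}+\hat\Gamma^\alpha_{\gamma\mu}\hat\Gamma^\mu_{\delta\beta}-\hat\Gamma^\alpha_{\delta\mu}\hat\Gamma^\mu_{\gamma\beta}$ and check vanishing term by term, but the coordinate-change route is cleaner and delivers all three claims at once.
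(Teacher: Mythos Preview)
Your proposal is correct and complete; the computations you outline (the change of frame $\partial/\partial y^\alpha$, the vanishing of $\hat\nabla_{\partial/\partial y^\alpha}\partial/\partial y^\beta$, and the Jacobian transform of $A$) all go through exactly as described.

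Your route differs from the paper's in organisation rather than in substance. The paper verifies the four claims separately: it substitutes \eqref{gamma} into the curvature formula \eqref{standard} to get flatness, checks $\hat\nabla A=0$ directly in the $x$-coordinates (observing it reduces to the geodesic-compatibility equation \eqref{corr3}), then shows the $\hat\nabla$-Hessians of the $y^\alpha$ vanish, and finally applies the Jacobian to obtain \eqref{lift2}. You instead hinge everything on the single coordinate change: once $\hat\nabla_{\partial/\partial y^\alpha}\partial/\partial y^\beta=0$ is established, flatness and the flat-coordinate claim drop out together, and constancy of $A$ in the $y$-chart gives parallelism for free. Your version is a bit more economical; the paper's has the minor advantage of making explicit the link between $\hat\nabla A=0$ and the equation \eqref{corr3} that governs geodesic compatibility, which motivates why the lift \eqref{lift1} was written down in the first place.
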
 

\begin{proof} One verifies Proposition \ref{lift}  by direct calculations. First we substitute the Christoffel symbols \eqref{gamma} into the formula \eqref{standard} for the curvature  tensor and see that it vanishes.  Next we substitute \eqref{lift1} in the relation $\hat \nabla A=0$ and see that it is equivalent to \eqref{corr3}. Again by direct calculations we observe that $\hat \nabla$-Hessians of the functions $y^\alpha$ vanish and, therefore, in the $y^\alpha$-coordinates 
the components of  $A$ are constants. Finally, calculating the Jacobi matrix $\Bigl(\frac{\partial y}{\partial x}\Bigr)$ and using it in the transformation rule for (2,0)-tensors  finishes the proof.
\end{proof} 

Note that the construction of the connection $\hat \nabla$ on $\mathbb{R}_{>0}\times \mathbb{R}^n$ does not involve $g$ or $L$, so it is the same for all $L$ and all $g$ from Theorem \ref{thm:main3}. In fact it is motivated by different `conification' procedures in projective (see e.g. \cite{Eastwood, Gover1,Gover}) and other Cartan  parabolic geometries. 

Proposition \ref{lift}  reduces the classification of geodesically compatible pairs  $(g, L)$ with flat $g$   
 to the classification of pairs of symmetric $(n+1)\times (n+1)$-matrices which is,  of course, well known (see e.g. \cite{LancRodm}). 
 Note that the metric $g$ itself is included in the family \eqref{norm} from  Theorem \ref{thm:main3} with $L=\Id$, i.e., $a^{ij}= g^{ij}$,  $b^i=0$ and  $K=0$. We see that  the first column and first row of the corresponding matrix $A_g$ vanish,  so that in terms of \cite{LancRodm} 
we are only interested in those pairs $A_g, A \in \operatorname{Sym}((n+1)\times (n+1))$ for which $A_g$ has rank $n$.


Our next observation is {\it maximality} of the pencils $\mathcal L_g$ and $\mathcal L$ and their relations with other (simpler) pencils. Let $x^1, \dots, x^n$ be canonical coordinates for the Euclidean metric $g \simeq \sum \mathrm{d} (x^i)^2$.  Consider the family $\mathcal S$ of operators $A$ given in these coordinates by symmetric matrices with constant entries.  Obviously every $A\in\mathcal S$ is a Nijenhuis operator and  therefore $\mathcal S$ is a Nijenhuis pencil.

Can one extend $\mathcal S$ to get a larger Nijenhuis pencil?  To answer this question,  we need to describe Nijenhuis operators $L$ such that  for each $A\in \mathcal S$ the sum $L + A$ is still Nijenhuis.   Analytically, this condition means that $L$ commutes with all $A\in\mathcal S$ in sense of Frolicher-Nijenhuis bracket \cite{fn1}, that is,
$$
\begin{aligned}
\, [[L, A]] (\xi,\eta) = &LA [\xi,\eta] - L[A\xi, \eta] - L[\xi, A\eta] + [L\xi, A\eta] +  \\
 + &AL [\xi,\eta] - A[L\xi, \eta] - A[\xi, L\eta] + [A\xi, L\eta]=0
\end{aligned}
$$
for any vector fields $\xi$ and $\eta$. If we temporarily ignore the fact that $L$ itself is a Nijenhuis operator, then the problem becomes linear and can be solved by straightforward computation that we omit.

\begin{proposition} \label{pro1}
Let  $[[ L, A]] = 0$ for all $A\in\mathcal S$. Then for $n\ge 3$, in the coordinates $x^1,\dots, x^n$, the matrix of $L$ takes the following form:
\begin{equation}\label{main_konyaev}
L =  A + x \, b^\top \!\! + c \, x^\top \!\! + K x \, x^\top, \ \   \mbox{where } \  x \!=\!\! \begin{pmatrix} x^1 \\ \vdots \\ x^n \end{pmatrix}\! , \ b \!=\!\! \begin{pmatrix} b^1 \\ \vdots \\ b^n \end{pmatrix}\! ,\  c \!=\!\! \begin{pmatrix} c^1 \\ \vdots \\ c^n \end{pmatrix}\! ,
\end{equation}
$A$ is an arbitrary constant matrix and  $K\in \R$  (cf. formula \eqref{norm} from Theorem \ref{thm:main3}). 

In dimension 2,  $L$ is the sum of an opertor \eqref{main_konyaev} and arbitrary skew-symmetic operator of the form  $\begin{pmatrix}  0 & f(x^1, x^2) \\ - f(x^1, x^2) & 0 \end{pmatrix}$. 
\end{proposition}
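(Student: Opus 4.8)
The plan is to turn the hypothesis ``$[[L,A]]=0$ for every $A\in\mathcal S$'' into an explicitly solvable first-order linear system for the entries of $L$. First I would work in the Euclidean coordinates $x^1,\dots,x^n$, where $\partial_a=\partial/\partial x^a$ is the covariant derivative and $g$-self-adjointness of $A$ means simply $A^a_b=A^b_a$. Writing the Frölicher--Nijenhuis bracket displayed above in coordinates with $\xi=\partial_j,\eta=\partial_k$ and using that $A$ is constant (so all $\partial A$-terms drop out), one gets the reduction
$$[[L,A]]^i_{jk} = A^a_j\,\partial_a L^i_k - A^a_k\,\partial_a L^i_j - A^i_a\,\partial_j L^a_k + A^i_a\,\partial_k L^a_j .$$
Since this must vanish for \emph{all} symmetric constant $A$, I would collect the coefficient of $A^a_b$ and symmetrise it in $(a,b)$, obtaining the single identity
$$\delta^b_j\partial_a L^i_k + \delta^a_j\partial_b L^i_k - \delta^b_k\partial_a L^i_j - \delta^a_k\partial_b L^i_j - \delta^a_i\partial_j L^b_k - \delta^b_i\partial_j L^a_k + \delta^a_i\partial_k L^b_j + \delta^b_i\partial_k L^a_j = 0 \qquad (\star)$$
to hold for all values of the free indices. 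All further work is index specialisation in $(\star)$.

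For $n\ge 3$ I would run four steps. Putting $b=a$ in $(\star)$ and then taking $a=j$ with $j\notin\{i,k\}$ (available because $n\ge 3$) collapses all but one Kronecker term and gives $\partial_j L^i_k=0$ whenever $j\ne i,k$; hence each entry $L^i_k$ depends only on $x^i$ and $x^k$. Next, taking $a=i,\ b=j$ with $i\ne j$ and a third index $k\ne i,j$ leaves exactly two surviving terms and yields $\partial_i L^i_k=\partial_j L^j_k$, so $\beta_k:=\partial_i L^i_k$ is independent of $i$ (for $i\ne k$); the symmetric choice $a=k,\ b=j$ gives $\gamma^i:=\partial_j L^i_j$ independent of $j$. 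Because $L^i_k$ depends only on $x^i,x^k$, comparing two representatives forces $\beta_k=\beta_k(x^k)$ and $\gamma^i=\gamma^i(x^i)$, while equality of mixed partials $\partial_j\partial_i L^i_j=\partial_i\partial_j L^i_j$ gives $\beta_j'(x^j)=(\gamma^i)'(x^i)$ for $i\ne j$; running the indices over three values (this is precisely where $n\ge3$ enters) forces a single common constant $\beta_j'=(\gamma^i)'=K$, so $\beta_j=Kx^j+b_j$ and $\gamma^i=Kx^i+c^i$. Integrating the off-diagonal derivatives gives $L^i_j=A^i_j+b_jx^i+c^ix^j+Kx^ix^j$ for $i\ne j$ with integration constants $A^i_j$. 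The diagonal is recovered from the remaining specialisation $b=j=i$ followed by $a=k\ne i$, which produces $\partial_i L^i_i=\beta_i+\gamma^i$; integrating in $x^i$ yields $L^i_i=A^i_i+(b_i+c^i)x^i+K(x^i)^2$, the $i=j$ instance of the same formula. Assembling the entries gives exactly $L=A+x\,b^\top+c\,x^\top+Kx\,x^\top$, i.e.\ \eqref{main_konyaev}. Since the argument solves the linear system exactly, the converse inclusion is automatic.

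For $n=2$ the very first step is empty, as there is no index outside $\{i,j\}$, so $(\star)$ no longer pins down the variable-dependence of the entries. Here I would instead expand $(\star)$ directly for $i,j,k,a,b\in\{1,2\}$ (equivalently evaluate the three independent symmetric matrices $A$ on the single nontrivial $2$-form component), getting a small explicit system on the entries of $L=\begin{pmatrix} p & q \\ r & s \end{pmatrix}$. Its general solution is the sum of an operator \eqref{main_konyaev} and an arbitrary skew-symmetric $\begin{pmatrix} 0 & f \\ -f & 0 \end{pmatrix}$; the free function $f$ reflects exactly the breakdown above, since in two dimensions the mixed-partial step only yields $\beta_2'=(\gamma^1)'$ and $\beta_1'=(\gamma^2)'$ with no reason for these two constants to agree, leaving the skew part unconstrained. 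A short direct check that $\begin{pmatrix} 0 & f \\ -f & 0 \end{pmatrix}$ indeed FN-commutes with every symmetric constant $A$ in dimension two completes this case.

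The computations are routine once organised, so the main obstacle is the combinatorial bookkeeping in $(\star)$: choosing the index specialisations so that the eight delta-terms collapse to the single desired relation at each step, and in particular isolating the diagonal entries, which require a specialisation genuinely different from the off-diagonal one. The second, logically separate, difficulty is the dimension-two case, where the generic argument fails at an identifiable point and must be replaced by the direct finite computation that exposes the extra skew-symmetric freedom.
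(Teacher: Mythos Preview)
Your approach is correct and is exactly the kind of ``straightforward computation'' the paper explicitly declines to write out; the paper gives no argument beyond observing that the problem is linear once one forgets the Nijenhuis condition on $L$. Your coordinate reduction of $[[L,A]]$, the resulting identity $(\star)$, and the chain of index specialisations recovering \eqref{main_konyaev} (including the separate treatment of the diagonal and the $n=2$ anomaly) constitute a complete proof filling that gap.
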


Notice that in order for \eqref{main_konyaev} to be a Nijenhuis operator, the parameters $A$, $b$, $c$ and $K$ must satisfy additional (non-linear) relations. However, the following fact is straightforward.

\begin{Cor} \label{cor:bols_1}
There is a unique Nijenhuis pencil $\mathcal L_g$ satisfying the following conditions:
\begin{itemize}
\item[{\rm(i)}]  $\mathcal L_g$ is maximal, 
\item[{\rm(ii)}]  $\mathcal L_g$ contains $\mathcal S$,
\item[{\rm(iii)}]  all operators from $\mathcal L_g$ are $g$-symmetric.
\end{itemize}
The matrices of operators $L\in\mathcal L_g$ in coordinates $x^1,\dots, x^n$ are given by  \eqref{main_konyaev} with $b = c$ and symmetric $A$.  In other words, $\mathcal L_g$ is the Nijenhuis pencil from Theorem \ref{thm:main3} with $g_{ij}=\delta_{ij}$. 
\end{Cor}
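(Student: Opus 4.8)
The plan is to exhibit the pencil \eqref{norm} (taken with $g_{ij}=\delta_{ij}$), which I denote $\mathcal{L}_g$, as the \emph{maximum} element of the family of all $g$-symmetric Nijenhuis pencils containing $\mathcal{S}$; uniqueness of a maximal element then follows formally, since a maximum is the unique maximal element. First I would check that $\mathcal{L}_g$ itself belongs to this family. By Theorem \ref{thm:main3} every operator of the form \eqref{norm} is geodesically compatible with $g$ and hence Nijenhuis, and these operators depend linearly on $(a^{ij},b^i,K)$, so they span a Nijenhuis pencil. Choosing $b^i=0$ and $K=0$ gives exactly the constant symmetric matrices, whence $\mathcal{S}\subseteq\mathcal{L}_g$; and since with $g_{ij}=\delta_{ij}$ one has $L^i_j=L^{is}\delta_{sj}=L^{ij}$, the matrix of each $L\in\mathcal{L}_g$ coincides with the symmetric array $(L^{ij})$, so every member of $\mathcal{L}_g$ is $g$-symmetric. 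It remains to show $\mathcal{L}_g$ absorbs every other member of the family.

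So let $\mathcal{L}'$ be any $g$-symmetric Nijenhuis pencil with $\mathcal{S}\subseteq\mathcal{L}'$, and take $L\in\mathcal{L}'$. For every $A\in\mathcal{S}$ and all $\alpha,\beta\in\R$ the operator $\alpha L+\beta A$ lies in $\mathcal{L}'$ and is therefore Nijenhuis; since $L$ and $A$ are themselves Nijenhuis, the criterion recalled in the proof of Lemma \ref{almost} forces the Frolicher--Nijenhuis bracket to vanish, $[[L,A]]=0$. Thus $L$ commutes (in the Frolicher--Nijenhuis sense) with all of $\mathcal{S}$, so Proposition \ref{pro1} applies: for $n\ge 3$ the matrix of $L$ has the form \eqref{main_konyaev}, while for $n=2$ it is such a matrix plus a skew-symmetric term $\begin{pmatrix} 0 & f \\ -f & 0\end{pmatrix}$.

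Next I would impose $g$-symmetry, which (as $g_{ij}=\delta_{ij}$) means simply that the matrix of $L$ is symmetric. Writing $L=A+x\,b^\top+c\,x^\top+K\,x\,x^\top$, its antisymmetric part is $(A-A^\top)+\bigl(x(b-c)^\top-(b-c)x^\top\bigr)$; requiring this to vanish for all $x$ forces first $b=c$ (by varying $x$) and then $A=A^\top$. Hence $L=A+x\,b^\top+b\,x^\top+K\,x\,x^\top$ with symmetric $A$, i.e. exactly \eqref{main_konyaev} with $c=b$, which is the pencil \eqref{norm} after renaming $K\mapsto -K$; thus $L\in\mathcal{L}_g$. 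In dimension two the extra skew term changes nothing: the symmetric part of $L$ is already of the form \eqref{norm}, and symmetry of $L$ forces the entire antisymmetric part, including $\begin{pmatrix} 0 & f \\ -f & 0\end{pmatrix}$, to vanish, so again $L\in\mathcal{L}_g$. Therefore $\mathcal{L}'\subseteq\mathcal{L}_g$.

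Combining the two halves, $\mathcal{L}_g$ is a $g$-symmetric Nijenhuis pencil containing $\mathcal{S}$ which contains every such pencil; it is therefore the unique maximal one, which establishes (i)--(iii) together with the stated explicit description. The only genuine input is Proposition \ref{pro1}: once it is available, the argument reduces to translating membership in a Nijenhuis pencil into the vanishing of $[[L,A]]$ and then doing the symmetry bookkeeping. The one step I would treat with care is the two-dimensional case, where Proposition \ref{pro1} carries the additional skew-symmetric freedom $f$; but this freedom is annihilated the instant $g$-symmetry is imposed, so $n=2$ presents no real obstacle.
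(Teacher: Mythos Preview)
Your argument is correct and is precisely the ``straightforward'' verification the paper has in mind: apply Proposition \ref{pro1} to see that any $g$-symmetric Nijenhuis extension of $\mathcal S$ is forced into the form \eqref{main_konyaev} with $b=c$ and symmetric $A$, hence into the pencil of Theorem \ref{thm:main3}, so that pencil is the maximum (and therefore the unique maximal) element. The only cosmetic remark is that in the two-dimensional case your phrasing could be tightened: $g$-symmetry does not separately kill the skew term $\begin{pmatrix} 0 & f \\ -f & 0\end{pmatrix}$ and the antisymmetric part of \eqref{main_konyaev}; rather it forces their sum to vanish, and the symmetric part of \eqref{main_konyaev} is then automatically of the form \eqref{norm} with the new $b=\tfrac12(b+c)$, which is what you need.
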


If we omit condition (iii), then there exist other maximal Nijenhuis extensions of $\mathcal S$.  For instance, we may take all operators $L$ of the form 
$L(x)= A + x b^\top$ with arbitrary constant $A$ (not necessarily symmetric!). 

For a flat metric $g$ of arbitrary signature, Corollary \ref{cor:bols_1} can be generalised as follows.  Let $\mathcal S$ be a Nijenhuis tensor that consists of covariantly constant $g$-symmetric operators $A$, then there exists a unique Nijenhuis pencil $\mathcal L_g$ satisfying the above conditions (i), (ii) and (iii).  This  pencil  coincides with that from Theorem \ref{thm:main3}.  The maximality of $\mathcal L_g$ immediately implies

\begin{Cor} \label{cor:bols_2}
The pencil $\mathcal L$ of Poisson-compatible metrics from Theorem  \ref{thm:main3} is maximal.
\end{Cor}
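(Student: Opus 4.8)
The plan is to derive the maximality of $\mathcal L$ from the maximality of the associated Nijenhuis pencil $\mathcal L_g$ (Corollary \ref{cor:bols_1}), using the linear isomorphism $\mathcal L_g = \mathcal L g^{-1}$: lowering one index with the fixed flat metric $g\in\mathcal L$ is a linear bijection between contravariant metrics and operators. Arguing by contradiction, I would suppose $\mathcal L$ is not maximal, i.e.\ that there is a contravariant metric $\tilde g=(\tilde g^{ij})$ which is Poisson-compatible with every element of $\mathcal L$ but does not belong to $\mathcal L$. Set $\tilde L^i_j:=\tilde g^{is}g_{sj}$. Since the correspondence above is bijective, $\tilde g\notin\mathcal L$ forces $\tilde L\notin\mathcal L_g$.

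First I would record that $\tilde L$ is $g$-symmetric (because $\tilde g^{ij}$ is symmetric) and is a Nijenhuis operator (because $\tilde g$ and $g\in\mathcal L$ are Poisson-compatible, so condition (A) applies to the pair $(\tilde g,g)$). The core of the argument is to show that $\tilde L$ together with $\mathcal L_g$ spans a single Nijenhuis pencil. Every element of this span has the form $M+\gamma\tilde L$ with $M\in\mathcal L_g$ and $\gamma\in\R$, so it suffices to prove that $\alpha M+\beta\tilde L$ is Nijenhuis for all $\alpha,\beta$ and all $M\in\mathcal L_g$. By Lemma \ref{almost} this is equivalent to $M\tilde L^{-1}$ being Nijenhuis. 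Writing $M=L^{is}g_{sj}$ for the corresponding $L^{ij}\in\mathcal L$, a short index computation gives $(M\tilde L^{-1})^i_j=L^{ir}\tilde g_{rj}$, which coincides, up to taking the inverse, with the operator from condition (A) for the pair $L^{ij},\tilde g$. These two metrics are Poisson-compatible by assumption, and the inverse of a Nijenhuis operator is again Nijenhuis, so $M\tilde L^{-1}$ is Nijenhuis and Lemma \ref{almost} applies.

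It then follows that the span of $\mathcal L_g$ and $\tilde L$ is a $g$-symmetric Nijenhuis pencil containing $\mathcal S\subset\mathcal L_g$ and strictly larger than $\mathcal L_g$ (it contains $\tilde L\notin\mathcal L_g$). This contradicts Corollary \ref{cor:bols_1}, by which $\mathcal L_g$ is the unique maximal $g$-symmetric Nijenhuis pencil containing $\mathcal S$; equivalently, by Proposition \ref{pro1} every $g$-symmetric operator Frolicher-Nijenhuis-commuting with all of $\mathcal S$ already lies in $\mathcal L_g$, which forces $\tilde L\in\mathcal L_g$. Either way we reach a contradiction, so no such $\tilde g$ exists and $\mathcal L$ is maximal.

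The step I expect to be most delicate is the application of Lemma \ref{almost}, which presupposes non-degeneracy of the operators involved, whereas elements of these pencils may degenerate on a small set. The standard remedy used throughout the paper applies here: non-degeneracy holds on an open dense subset (the relevant determinant is a nontrivial polynomial in the coordinates and parameters), and the vanishing of the Frolicher-Nijenhuis torsion is a closed condition, so the identity $[[M,\tilde L]]=0$ proved on the dense set extends everywhere by continuity. The remaining ingredients — the $g$-symmetry of $\tilde L$ and the identity $(M\tilde L^{-1})^i_j=L^{ir}\tilde g_{rj}$ — are routine manipulations with the fixed metric $g$.
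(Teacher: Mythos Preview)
Your argument is correct and follows exactly the route the paper intends: in the text, the deduction of Corollary~\ref{cor:bols_2} from Corollary~\ref{cor:bols_1} is stated in one line (``the maximality of $\mathcal L_g$ immediately implies''), and what you have written is a careful unpacking of that sentence via the bijection $\mathcal L_g=\mathcal L g^{-1}$ together with Lemma~\ref{almost}. Your handling of the degeneracy issue (density plus continuity of the Frolicher--Nijenhuis bracket, which is moreover linear in~$M$) is also the standard one and is adequate here.
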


This pencil admits the following alternative description. Let $g$ be a flat metric and $\nabla$ be its Levi-Civita connection. Consider all the metrics $\hat g$ covariantly constant w.r.t. $\nabla$.  Obviously, they are flat and pairwise Poisson-compatible.  

\begin{Cor} \label{cor:bols_3}
Assume that $\bar g$ is almost compatible with every covariantly constant $\hat g$ in the sense of  \cite{Mokhov, Mokhov1}, i.e.,  $L=\bar g^{-1} \hat g$ is Nijenhuis.  Then
$\bar g$ has constant curvature. Moreover, any two metrics $\bar g_1$ and $\bar g_2$ satisfying this condition are Poisson compatible. All together they form the pencil $\mathcal L$ from  Theorem  \ref{thm:main3}. 
\end{Cor}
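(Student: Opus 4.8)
The plan is to reduce the ``multiplicative'' compatibility hypothesis to the ``additive'' (Frolicher--Nijenhuis) condition already handled by Proposition \ref{pro1} and Corollary \ref{cor:bols_1}. First I fix flat coordinates $x^1,\dots,x^n$ for $g$, so that the covariantly constant metrics $\hat g$ are exactly those with constant components; writing $A:=g^{-1}\hat g$ identifies them with the space $\mathcal S$ of constant $g$-symmetric operators (with $g$ itself corresponding to $A=\Id$). Setting $L_0:=\bar g^{-1}g$, which is automatically $g$-symmetric since $\bar g$ is symmetric, the operator $\bar g^{-1}\hat g$ equals the product $L_0A$. Thus the hypothesis reads: $L_0A$ is a Nijenhuis operator for every $A\in\mathcal S$ (in particular $L_0=L_0\Id$ itself).

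The conceptual core is to convert this product condition into the bracket condition $[[L_0,A]]=0$. For this I apply the algebraic identity established inside the proof of Lemma \ref{almost} with $L_1=L_0A$ and $L_2=A$: since $A$ is constant we have $\mathcal N_A=0$ and $L_1L_2^{-1}=L_0$, so the identity collapses to $\mathcal N_{L_0A}(v,w)-\mathcal N_{L_0}(Av,Aw)=L_0[[L_0A,A]](v,w)$. The hypothesis makes both torsions on the left vanish, whence $[[L_0A,A]]=0$ for all invertible $A\in\mathcal S$, and then for all $A\in\mathcal S$ by density (the invertible ones are dense and $[[L_0A,A]]$ is polynomial in the entries of $A$). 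Now I replace $A$ by $A+t\,\Id$: using bilinearity and symmetry of the Frolicher--Nijenhuis bracket together with the elementary fact $[[\,\cdot\,,\Id]]=0$, one gets $[[L_0(A+t\Id),A+t\Id]]=[[L_0A,A]]+t\,[[L_0,A]]$. Since the left-hand side vanishes while $[[L_0A,A]]=0$, the coefficient of $t$ yields $[[L_0,A]]=0$ for every $A\in\mathcal S$.

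With $[[L_0,A]]=0$ for all $A\in\mathcal S$ and $L_0$ being $g$-symmetric and Nijenhuis, Proposition \ref{pro1} and its $g$-symmetric specialisation Corollary \ref{cor:bols_1} force $L_0$ into the pencil $\mathcal L_g$ of Theorem \ref{thm:main3}; equivalently, $L_0$ is geodesically compatible with $g$. Then $\bar g=gL_0^{-1}$ has constant curvature by Theorem \ref{thm:main1} (or directly by Theorem \ref{thm:main3}), and its contravariant form $\bar g^{ij}=L_0^{ij}$ is an element of $\mathcal L$ of the shape \eqref{norm}. Pairwise Poisson-compatibility of any two metrics $\bar g_1,\bar g_2$ satisfying the hypothesis is then immediate from the second statement of Theorem \ref{thm:main1}, applied to the associated geodesically compatible operators.

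To conclude that the totality of admissible $\bar g$ is \emph{exactly} $\mathcal L$, I must also prove the reverse inclusion: every $\bar g$ with $\bar g^{ij}$ of the form \eqref{norm} satisfies the hypothesis. Via the same master identity this reduces (using $\mathcal N_{L_0}=0$) to verifying $[[L_0A,A]]=0$ for $L_0\in\mathcal L_g$ and $A\in\mathcal S$. This is the step I expect to be the main obstacle, since it does \emph{not} follow formally from $[[L_0,A]]=0$: knowing $[[L_0,A]]=0$ only pins down the polarisation of the quadratic map $A\mapsto[[L_0A,A]]$ against $\Id$, not its full vanishing. I would settle it by a direct computation in the flat coordinates using the explicit polynomial form \eqref{norm} of $L_0^{ij}$ and the constancy of $A$ --- the same type of routine, reversible calculation underlying Proposition \ref{pro1} and Theorem \ref{thm:main3} --- or, alternatively, by the density/algebraicity argument from the proof of Corollary \ref{cor:compatible}, reducing to operators $L_0$ with $n$ distinct real eigenvalues where the classical result of \cite{Mokhov1,Mokhov} applies. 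Once the reverse inclusion is in place, the admissible $\bar g$ coincide with $\mathcal L$, completing the proof.
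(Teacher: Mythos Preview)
Your argument is correct, and the paper does not supply an explicit proof of this corollary (it is stated as an immediate reformulation of the preceding results).  However, you have taken a more circuitous route than necessary.  Instead of invoking the internal identity from the proof of Lemma~\ref{almost} and then running the polarisation trick $A\mapsto A+t\,\Id$, you can apply the \emph{statement} of Lemma~\ref{almost} directly: with $L_1=L_0=\bar g^{-1}g$ and $L_2=B=\hat g^{-1}g\in\mathcal S$, one has $L_1L_2^{-1}=\bar g^{-1}\hat g$, so Lemma~\ref{almost} says that $\bar g^{-1}\hat g$ is Nijenhuis \emph{if and only if} $\alpha L_0+\beta B$ is Nijenhuis for all $\alpha,\beta$, i.e.\ $[[L_0,B]]=0$.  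Letting $\hat g$ vary over all covariantly constant metrics (equivalently $B$ over all invertible elements of $\mathcal S$, then all of $\mathcal S$ by density) converts the hypothesis into $[[L_0,B]]=0$ for every $B\in\mathcal S$, and now Proposition~\ref{pro1}/Corollary~\ref{cor:bols_1} finish as you describe.

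The payoff of this reformulation is that it is an equivalence, so your ``main obstacle'' for the reverse inclusion evaporates: if $L_0\in\mathcal L_g$ then $[[L_0,B]]=0$ for all $B\in\mathcal S$ (since $\mathcal L_g$ is a Nijenhuis pencil containing $\mathcal S$), and Lemma~\ref{almost} run backwards gives that $\bar g^{-1}\hat g=L_0B^{-1}$ is Nijenhuis.  No separate computation or density argument is needed.
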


\section{ Proof of Theorem  \ref{thm:einstein} and Poisson compatibility of metrics $g, gL^{-1}, gL^{-2},\dots$}
\label{sec:proof of theorem einstein}

Throughout this section,  $g$ denotes a flat  metric  and  $L$ is an operator geodesically compatible with $g$.  

From Lemma \ref{bolsadd1} (and Theorem \ref{thm:main3}) we see that the metric $gL^{-1}$ is flat if and only if  $K=0$. The next two statements answer the  natural question on necessary and sufficient conditions for the metric  $gL^{-2}$, and more generally, all the metrics $gL^{-k}$ with $k\le k_0$ to be flat.  

\begin{Lemma} \label{lem:two}  Suppose $g$ is flat and 
$L$   is geodesically compatible with  $g$ and non-degenerate.  Let $gL^{-1}$ be flat, then $\lambda^j$ is a parallel vector field, i.e., $\nabla_i \lambda^j=0$, implying that   $g^{ij}\lambda_i \lambda_j$ is a constant. Moreover, 
  the metric $gL^{-2}$ is  flat if and only if   $\lambda^j$ is null, that is,   
$g_{ij}\lambda^i \lambda^j=g^{ij}\lambda_i\lambda_j=0$.   
\end{Lemma}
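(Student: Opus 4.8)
The starting point is the curvature formula \eqref{magic}, valid for any metric and any geodesically compatible operator. Since by Corollary \ref{cor:1} the metric $\bar g = gL^{-1}$ is itself geodesically compatible with $L$, I can apply \eqref{magic} to the pair $(\bar g, L)$, writing it as
\begin{equation*}
[\bar R(v,u), L] = [u\otimes \bar g(v) - v\otimes \bar g(u),\ \bar M], \quad \bar M^i_j = \bar\nabla^i \bar\nabla_j \lambda.
\end{equation*}
We are now in the case $K = 0$ (this is exactly when $gL^{-1}$ is flat, as observed just before the statement), so Lemma \ref{bolsadd1} gives $\nabla^i\nabla_j\lambda = -K\delta^i_j = 0$; that is, $\lambda^j$ is already a parallel vector field for $g$. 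Parallelism immediately forces $g^{ij}\lambda_i\lambda_j$ to be constant, since $\nabla_k(g^{ij}\lambda_i\lambda_j) = 2\lambda^i\nabla_k\lambda_i = 0$. This disposes of the first assertion.

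For the second assertion I would compute $\bar R$, the curvature of $\bar g = gL^{-1}$, in terms of $\bar M$. Using the Hessian transformation rule \eqref{eq:additional}, specialised to $\lambda$ and to the present case $\nabla^s\nabla_j\lambda = 0$, one gets
\begin{equation*}
\bar\nabla^i\bar\nabla_j \lambda = L^i_s\,\nabla^s\nabla_j\lambda + \lambda_s\lambda^s\,\delta^i_j = (\lambda_s\lambda^s)\,\delta^i_j,
\end{equation*}
so that $\bar M = (\lambda_s\lambda^s)\,\Id$ is a scalar multiple of the identity. Feeding this into the curvature relation for $(\bar g, L)$, the right-hand side $[u\otimes\bar g(v) - v\otimes\bar g(u),\, \bar M]$ vanishes identically (any matrix commutes with a multiple of $\Id$), whence $[\bar R(v,u), L] = 0$ for all $u,v$. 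The plan is then to argue that, because $L$ is non-degenerate and differentially non-degenerate (so not a multiple of $\Id$ on an open set), this commutation condition together with the structure of a constant-curvature tensor pins down the curvature of $\bar g$: the metric $gL^{-2}$ is flat precisely when the relevant scalar factor vanishes.

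To make the last step concrete I would invoke Lemma \ref{bolsadd1} once more, now applied to the pair $(\bar g, L)$ in place of $(g, L)$: since $\bar g$ is flat and geodesically compatible with $L$, the metric $\bar g L^{-1} = gL^{-2}$ has constant curvature, call it $\bar K$, and Lemma \ref{bolsadd1} gives $\bar\nabla^i\bar\nabla_j\lambda + \bar K\,\delta^i_j = 0$. Comparing with the computation $\bar\nabla^i\bar\nabla_j\lambda = (\lambda_s\lambda^s)\,\delta^i_j$ above yields $\bar K = -\lambda_s\lambda^s = -g^{ij}\lambda_i\lambda_j$. Therefore $gL^{-2}$ is flat ($\bar K = 0$) if and only if $g^{ij}\lambda_i\lambda_j = 0$, i.e. $\lambda^j$ is null, which is the claim. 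The only genuinely delicate point is the bookkeeping in re-applying Lemma \ref{bolsadd1} to the shifted pair and confirming that ``$\lambda$ for $\bar g$'' is the same $\lambda = \tfrac12\tr L$ as for $g$ (it is, because $\tr L$ is computed algebraically from $L$ and is connection-independent); once that is checked, the equivalence falls out of the identity $\bar K = -g^{ij}\lambda_i\lambda_j$, and everything else is routine tensor algebra.
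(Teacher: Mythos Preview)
Your argument is correct and coincides with the paper's: apply Lemma~\ref{bolsadd1} first to $(g,L)$ with $K=0$ to get $\nabla^i\nabla_j\lambda=0$, then to $(\bar g,L)$ together with the Hessian transformation~\eqref{eq:additional} to obtain $\bar K=-g^{ij}\lambda_i\lambda_j$. The detour through the commutator relation~\eqref{magic} and the remark about $L$ being ``differentially non-degenerate'' are superfluous (and the latter is not among the hypotheses), but since you do not actually rely on them the proof stands as written.
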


\begin{proof}
Since  $gL^{-1}$ is flat,  $K$ in Lemma \ref{bolsadd1} is zero. Hence, \eqref{eq:hess_lambda} implies that $\lambda^i$ is $\nabla$-parallel. Of course, the same is seen from  Theorem \ref{thm:main3}, since in this case $\lambda^i= a^i$ in flat coordinates and is clearly parallel.  

In order to prove the second claim, we use \eqref{eq:hess_lambda} for the metric $\bar g=gL^{-1}$ which is still compatible with $L$:
$\bar \nabla^i \lambda_j  + \bar K \delta^i_j = 0$.  
In view of \eqref{eq:additional}, we have  $\bar \nabla^i \lambda_j  = L^i_s  \nabla^s \lambda_j  - \lambda^s\lambda_s \delta^i_j$. Since
$\nabla^s \lambda_j = \nabla_j \lambda^s = 0$, we see that  $\bar K=0$  if and only if $\lambda^s \lambda_s=g^{is}\lambda_i\lambda_s=0$. 
\end{proof}

\begin{Cor} \label{cor:4}   Suppose $g$ is flat,  
$L$   is geodesically compatible with  $g$ and non-degenerate.  

The necessary and sufficient conditions for flatness of each of the metrics 
$g, gL^{-1}, \dots, gL^{-k}$ ($k\ge 2$) are as follows:   
the $1$-form $\mathrm{d}\left(\frac{1}{2}\operatorname{tr} L\right)=  \lambda_i$ is parallel with respect to $\nabla=\nabla^g$  and   
$$ 
g^{ij}\lambda_i \lambda_j =  
(gL^{-1})^{ij}\lambda_i \lambda_j=\cdots  = (gL^{-k+2})^{ij}\lambda_i \lambda_j=0.
$$ 
Moreover, if each of the metrics  
$g, gL^{-1}, \dots , gL^{-k}$  ($k\ge 2$)  is flat, then $\lambda_i$ is parallel with respect each Levi-Civita connection related to $g, gL^{-1},\dots,gL^{-k+1}$.  
\end{Cor}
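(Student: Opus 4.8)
The plan is to reduce the statement to iterated applications of Lemma \ref{lem:two} and Lemma \ref{bolsadd1}, exploiting two structural facts. First, by Corollary \ref{cor:1} the pair $(gL^{-1},L)$ is again geodesically compatible; iterating this observation, each metric $g_m:=gL^{-m}$ is geodesically compatible with $L$ (and non-degenerate, since $L$ is). Second, the function $\lambda=\tfrac12\tr L$ is intrinsic to the $(1,1)$-tensor $L$ and hence is the \emph{same} for every pair $(g_m,L)$; only the metric used to raise indices and to build the connection changes with $m$. This is precisely what lets me feed the pair $(g_m,L)$ into Lemma \ref{lem:two} whenever $g_m$ itself is flat.

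For the forward direction, assume $g_0,\dots,g_k$ are all flat. Applying the first part of Lemma \ref{lem:two} to $(g_0,L)$, whose successor $g_1=g_0L^{-1}$ is flat, gives that $\lambda_i$ is $\nabla^{g}$-parallel. For each $m$ with $0\le m\le k-2$ the three consecutive metrics $g_m,g_{m+1},g_{m+2}$ are all flat, so the second part of Lemma \ref{lem:two} applied to the pair $(g_m,L)$ forces $\lambda$ to be null with respect to $g_m$, that is $(g_m)^{ij}\lambda_i\lambda_j=0$. Together these are exactly the asserted conditions.

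For the converse I argue by induction on the length of the flat chain. The metric $g_0=g$ is flat by hypothesis, and $g_1=gL^{-1}$ is flat because $\lambda_i$ being $\nabla^{g}$-parallel makes the Hessian $\nabla^i\nabla_j\lambda$ vanish, so $K=0$ in Lemma \ref{bolsadd1}. Suppose now $g_0,\dots,g_j$ are flat for some $1\le j\le k-1$. As $g_{j-1}$ and $g_j$ are both flat, the first part of Lemma \ref{lem:two} shows $\lambda$ is $\nabla^{g_{j-1}}$-parallel, and its second part applied to $(g_{j-1},L)$, using the hypothesis $(g_{j-1})^{ij}\lambda_i\lambda_j=0$ (available since $j-1\le k-2$), yields that $g_{j+1}=g_{j-1}L^{-2}$ is flat. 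This advances the induction up to $g_k$. Finally, the ``moreover'' claim is the first part of Lemma \ref{lem:two} applied to each pair $(g_j,L)$ with $0\le j\le k-1$: both $g_j$ and $g_{j+1}$ are flat, hence $\lambda_i$ is parallel with respect to $\nabla^{g_j}$.

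The single delicate point --- and essentially the only place the argument could be spoiled by careless ordering --- is that Lemma \ref{lem:two} requires its base metric to be flat \emph{before} it can constrain the metric two steps later. Consequently one cannot declare $g_{j+1}$ flat until $g_{j-1}$ and $g_j$ are already known to be flat, which is exactly what dictates the inductive bookkeeping above. Once this dependency is respected, the whole corollary is a faithful transcription of the two-step Lemma \ref{lem:two} into a $k$-step chain.
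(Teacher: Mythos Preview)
Your proof is correct and follows the same approach as the paper, which simply states that the corollary ``follows from iterative application of Lemma~\ref{lem:two}.'' You have fleshed out the bookkeeping that the paper leaves implicit: the use of Corollary~\ref{cor:1} to ensure $(g_m,L)$ remains geodesically compatible at each step, and the appeal to Lemma~\ref{bolsadd1} to start the converse induction.
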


Here by $(gL^{-m})^{ij}$ we denote the dual tensor to $(gL^{-m})_{ij}$. In other words, in matrix notation, $(gL^{-m})^{ij}$  are the components of the inverse matrix $(gL^{-m})^{-1} = L^m g^{-1}$.

\begin{proof}  Corollary \ref{cor:4} follows from iterative application of Lemma  \ref{lem:two}. 
\end{proof}

This corollary allows us to construct examples of pairs $(g, L)$  
such that $gL^{-k}$ is flat for all $k>0$ and $\mathrm{d}\operatorname{tr}(L)\ne 0$.  The simplest example of this kind is as follows.  Let $b=b^i\ne 0$ be a constant vector which is null with respect to a non-degenerate symmetric matrix  $g_{ij}$. We  view $g$  as a flat metric and 
take $a^{ij}=(g^{-1})$ and $K=0$.  The $(2,0)$-tensor $L^{ij}$ constructed by   \eqref{norm} for 
these $a^{ij},b^i, K$ evidently has the property that $\lambda^i$ is parallel with respect to $\nabla^g$  and  that  $ (gL^{-k})^{ij}\lambda_i \lambda_j=0$ for every $k$.

Let us give a more interesting example based on the same linear-algebraic idea.

\begin{Ex} \label{ex:new}  {\rm 
Take 
$$
g_{ij} =\begin{pmatrix}  & &  &1 \\ 
                   &  &  \iddots& \\
									& \iddots& & \\
									1& & & \end{pmatrix}\ , \ \ 
									a^{ij}=\begin{pmatrix}  & &  &0 \\ 
                   &  &  \iddots& 1\\
									& \iddots&\iddots & \\
									0& 1& & \end{pmatrix}\ , \ \ b= e_m \ , \ \ K=0.    
$$
(Here $e_m$ denotes the $m$-th basis vector so that  all components of $b$ vanish except for the $m$-th one which is equal to $1$). Now if $m\ge n/2 +1,$ then the necessary and sufficient flatness conditions for $g, gL^{-1}, \dots ,gL^{-k}$ from Corollary \ref{cor:4} are fulfilled for any $k$. This implies, in particular, that for any real analytic function $f$ of one variable, the metric $g (f(L))^{-1}$ is also flat (at those points where $f(L)$ is non-degenerate) so that we obtain an infinite-dimensional pencil of contravariant flat metrics of the form $f(L)g^{-1}$. 
}\end{Ex} 

This example  may possibly be of interest in the study of infinite-dimensional systems of hydrodinamic type for the following reason. In this theory one customary assumes that the operators defining these systems are diagonalisable and this special case is well studied.  The case of other Segre characteristics  is generally considered to be much harder and, in fact, there are only very few such examples in the literature.  In the above example,  the corresponding operator $L^i_j=L^{is}g_{sj}$  has nontrivial Jordan blocks. 
  For instance, in  the  4-dimensional case with $m=3$ and $g$ and $L$ given by:  
$$
g_{ij}= \begin{pmatrix}  & &  &1 \\ 
                   &  &  1& \\
									& 1& & \\
									1& & & \end{pmatrix}\ , \ 									
									L^{ij}=  \begin {pmatrix} 0&0&x^{{1}}&0\\0&0&x^{
{2}}&1\\ x^{1}&x^{{2}}&2\,x^{{3}}+1&x^{{4}}
\\ {}0&1&x^{{4}}&0\end {pmatrix},  
$$
the  operator  $L^i_j$ has two Jordan $2\times 2$ blocks with nonconstant eigenvalues. After a suitable coordinate transformation, $g$ and $L$ become 
$$
g_{ij} = \begin{pmatrix} 2\, \left( -2\,x^{{2}}-1 \right)  \left( 
x^{{3}}-x^{{1}} \right) & \left( x^{{3}}-x^{{1}} \right) ^{2}&0&0
\\  \left( x^{{3}}-x^{{1}} \right) ^{2}&0&0&0
\\ \noalign{\medskip}0&0&2\, \left( -2\,x^{{4}}-1 \right)  \left( x^{{
1}}-x^{{3}} \right) & \left( x^{{1}}-x^{{3}} \right) ^{2}
\\ \noalign{\medskip}0&0& \left( x^{{1}}-x^{{3}} \right) ^{2}&0
\end {pmatrix},$$ 
$$L^i_j=  
  \begin {pmatrix} x^{{1}}&0&0&0\\ 2\,x^{{
2}}+1&x^{{1}}&0&0\\ 0&0&x^{{3}}&0
\\ 0&0&2\,x^{{4}}+1&x^{{3}}\end {pmatrix}.
$$

This phenomenon survives for all dimensions. The appearing Jordan blocks are $2\times 2$, the number of such blocks depends on $m$ and in the case of even $n$ and $m=n/2+1$ equals $n/2$. The corresponding eigenvalues are not constant. 

As far as we know, such examples (with many Jordan blocks with nonconstant eigenvalues) were not known before and may open a door to 
other Segre characteristics in the theory of integrable systems of hydrodynamic type. 

Now, if in Example \ref{ex:new}  we set $m<n/2 +1$, then it is a simple exercise in Linear Algebra to check that for a certain $k_0$,   the metric $gL^{-k_0-1}$ has constant non-zero curvature, whereas
 $g,gL^{-1},\dots,gL^{-k_0}$ are still flat. The maximal value of such  
$k_0$ is $n$. Let us give the corresponding example.

\begin{Ex} \label{ex:1}{\rm
On $\mathbb{R}^n$ with coordinates $x^1,\dots ,x^n$, consider the  metric $g$  
 and the tensor $L^{ij}$ given by \eqref{norm} with  the following  data $ A, b$ and $K$:   
$$
g=\begin{pmatrix}  & &  &1 \\ 
                   &  &  \iddots& \\
									& \iddots& & \\
									1& & & \end{pmatrix}\ , \ \ 
									A=\begin{pmatrix}  & &  &0 \\ 
                   &  &  \iddots& 1\\
									& \iddots&\iddots & \\
									0& 1& & \end{pmatrix}\ , \ \ b = \begin{pmatrix} 1 \\ 0\\   \vdots \\ 0 \end{pmatrix} \ , \ \ K=0.    
$$
Then the metrics $gL^{-1},\dots,gL^{-n}$ are flat and $  gL^{-n-1}$ has constant non-zero curvature. 

}\end{Ex} 

In this example  $L^i_j$ has $n$ different nonconstant eigenvalues at a generic point. 
From Theorem \ref{thm:main4} it follows that this pair $(g, L)$ is locally isomorphic (modulo multiplication of $L$ by a constant) to that from  \eqref{eq:gL} (or   \eqref{ex:LC}, at almost every point).


\begin{proof}[Proof of Theorem \ref{thm:einstein}]  
We assume that $g$ is Einstein with a constant  non-zero scalar curvature, $L$ is geodesically  compatible to $g$,    $\operatorname{det} L>0$ and  $\mathrm{d}\operatorname{tr}(L)\ne 0$. By $\tilde g$  we denote the metric $(\operatorname{det} L)^{-1} gL^{-1}$ that is 
geodesically equivalent to $g$.

Note that  $\tilde g$ is also Einstein (with constant  scalar curvature). In dimension $n>2$ it follows from \cite[Lemma 3 and Corollary 5]{einstein}, and in dimension $n=2$ it is Beltrami Theorem (see e.g. \cite{beltrami}). 

Next, consider the function $\phi= -\log \sqrt{\det L}$ and $\phi_i= \nabla_i \phi= -\mathrm{d}\left(\log \sqrt{\det L}\right)$. It is known (e.g., \cite[\S 2.2]{einstein}) that the Christoffel symbols and Ricci tensors  of $\tilde g$ and $g$  are related by the following formulas:
\begin{eqnarray}
\tilde \Gamma_{jk}^i &= & \Gamma_{jk}^i + \delta^i_j \phi_k +\delta^i_k \phi_j. 
\label{schouten1}
\\  \tilde R_{ij} &=&  R_{ij} - (n - 1)(\nabla_j \phi_{i} - \phi_i\phi_j).\label{schouten2}  \end{eqnarray}

 	Replacing  the $g$-covariant derivative $\nabla $ in \eqref{schouten2} by $\tilde g$-covariant derivative $\tilde \nabla$, we obtain 
	\begin{eqnarray}
	\tilde R_{ij} &=&  R_{ij} - (n - 1)(\tilde \nabla_j \phi_{i} + \phi_i\phi_j).\label{schouten3}  
	\end{eqnarray}
	Next, making the substitution $\phi= \log \psi$ (in our case $\psi= \left(\sqrt{\det L}\right)^{-1}$), we see that 
	\begin{equation} \label{schouten4}
	\tilde \nabla_j \phi_{i} + \phi_i\phi_j= \tilde \nabla_j \frac{\tilde \nabla_i \psi}{ \psi }+ \frac{\tilde \nabla_i \psi}{ \psi }\frac{\tilde \nabla_j \psi}{ \psi }= \frac{1}{ \psi }\tilde \nabla_j\tilde \nabla_i \psi.
	\end{equation}

Note  that the metric  $\bar g = gL^{-1}$ we are interesting in 
 is conformally related  to    $\tilde g=(\operatorname{det} L)^{-1} gL^{-1}$ and 
 the conformal coefficient is $\psi^{-2}  = e^{-2 \phi}$.
It is well-known (see for example \cite[eq. (2.21)]{brinkmann}) that the  Ricci tensors $\tilde R_{ij}$ and $\bar R_{ij}$ of two conformally related metrics $\tilde g$ and $\bar g= \psi^{-2} \tilde g = e^{-2\phi} \tilde g$  are connected   by
\begin{equation} \label{eq1}  
\bar R_{ij} =\tilde R_{ij}+ (\Delta_{\tilde g} \phi- (n-2) \| \tilde\nabla \phi\|_{\tilde g}^2)\tilde g_{ij}+ \frac{n-2}{\psi}\tilde \nabla_i\tilde \nabla_j \psi.   
\end{equation} 
Here $\Delta_{\tilde g} \phi:=  \tilde g^{ij} \tilde \nabla_i \tilde \nabla_j \phi$ is just the $\tilde g$-Beltrami-Laplace operator applied to  $\phi$.

Starting from this point, our proof depends on dimension. We first consider the case $n>2$.
 In this case  the first two terms on the right  hand side of \eqref{eq1} are proportional to $\tilde g$. If  $\bar g$ is Einstein, 
the left hand side is also proportional to $\tilde g$, so    $\tilde \nabla_i\tilde \nabla_j \psi$  is  proportional to $\tilde g$.
But this leads to a contradiction with \eqref{schouten3} and \eqref{schouten4}. Indeed, then  $R_{ij}$ is proportional to 
$\tilde g_{ij}$. Hence, $g_{ij}$ is proportional to $\tilde g_{ij}$, so $L^i_j$ is proportional to $\delta^i_j$. 
  By the  Weyl Theorem \cite{Weil}, this implies that $\mathrm{d}\operatorname{tr} L=0$ which is forbidden by our assumptions.

Let us now consider the remaining dimension $n=2$. From Corollaries 1 and 2  of  \cite{einstein} we see that   
if a metric $g$ is Einstein of constant scalar curvature  $(n-1) K$ and $L$ is compatible with it, then in addition to  \eqref{eq:L}
the following equation holds  for a certain constant $C$:
\begin{eqnarray} \label{VnB}
\nabla^i \lambda_j &=& (-K \tr L + C) \delta^i_j -K  L^i_j .
\end{eqnarray}

Note that though in \cite{einstein}  one generally assumes that $n= \textrm{dim} M\ge 3$,  formula \eqref{VnB}  holds in dimension 2 as well; in fact the statement 
$\nabla^i \lambda_j = \mu \delta^i_j -  K L^i_j$ (which is Corollary 1 in  \cite{einstein})
 follows from and is equivalent to \eqref{schouten2}, and the proof of the relation $\nabla_i \mu = -2 K \lambda_i$, which is Corollary 2 in \cite{einstein}, is straightforward and works also  in dimension 2.  Note also that  \eqref{VnB} for metrics of constant curvature 
 follows directly from \cite[Theorem 5.1]{Eastwood} or from \cite[Theorem 1]{Sinjukov2}.

Next, in view of \eqref{Lem:0} we have 
$$
\bar g^{si}  \bar  \nabla_{s} \lambda_j =   L^i_s  \nabla^s \lambda_j + \lambda^s \lambda_s \delta^i_j \stackrel{\eqref{VnB}}= (C-K \textrm{tr}L)L^i_j     -K (L^2)^i_j. 
$$
Now we use the fact that in dimension 2,  we have $(L^2)^i_j - \tr L  \cdot L^i_j  + \det L  \cdot\delta^i_j=0$ for any operator $L$ (Cayley-Hamilton theorem). This allows us to replace $(L^2)^i_j$ in the above formula  by $\tr L   \cdot L^i_j  - \det L \cdot  \delta^i_j$ and we obtain 
$$
\bar g^{si}  \bar  \nabla_{s} \lambda_j  = (\lambda^s\lambda_s+ K \det L) \delta^i_j +  (C- 2 K \tr L)  L^i_j. 
$$
We see that the left hand side of this formula is as in the formula \eqref{VnB} written for the metric $\bar g$; this implies that 
$(-2 K \tr L + C)=\const$ which is forbidden by the assumptions $K\ne 0$ and $\mathrm{d} \operatorname{tr} L\ne 0$.  The obtained contradiction proves Theorem  \ref{thm:einstein}.  
\end{proof}


Combining Theorem \ref{thm:einstein} with  Lemma \ref{lem:two}   we immediately see that for positive definite flat  metrics  
the number $k_0$ such that  $g, gL^{-1},\dots, gL^{-k_0}$ are flat is at most two and the number $k_0$ such that  $g,gL^{-1},\dots, gL^{-k_0}$ are of constant curvature is at most three.

\begin{Cor} \label{cor:3} 
Suppose $g$ is a Euclidean metric (i.e., flat and positive definite).  
Let $L$  be  geodesically compatible with  $g$, non-degenerate and $\mathrm{d}\operatorname{tr} L\ne 0$.  

Then there is the following alternative: either $gL^{-1}$ has constant non-zero curvature and $gL^{-2}$ is not of constant curvature, or $gL^{-1}$ is flat, $gL^{-2}$ has constant non-zero curvature and $gL^{-3}$ is not of constant curvature. In particular, $gL^{-2}$ cannot be flat. 

\end{Cor}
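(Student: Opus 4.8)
The plan is to split into two cases according to the value of the (constant) curvature $K$ of $gL^{-1}$ guaranteed by Lemma \ref{bolsadd1}. The one place where positive-definiteness is genuinely used should be isolated first: since $\mathrm{d}\operatorname{tr} L\ne 0$ we have $\lambda_i\ne 0$, and because $g$ is Euclidean the scalar $g^{ij}\lambda_i\lambda_j$ is strictly positive, so $\lambda$ is never $g$-null. This is exactly the property that would fail in indefinite signature, and it is the reason the statement is restricted to positive-definite $g$.

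First I would suppose $K\ne 0$. Then $gL^{-1}$ has constant non-zero curvature, hence is Einstein with constant non-zero scalar curvature, and by Corollary \ref{cor:1} it is geodesically compatible with $L$. Since $\mathrm{d}\operatorname{tr} L\ne 0$, Theorem \ref{thm:einstein} applied with $gL^{-1}$ in the role of the Einstein metric shows that $(gL^{-1})L^{-1}=gL^{-2}$ is not Einstein of constant scalar curvature, in particular not of constant curvature. This is the first branch of the alternative.

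Next I would suppose $K=0$, so that $gL^{-1}$ is flat. By Lemma \ref{lem:two} the metric $gL^{-2}$ is flat if and only if $\lambda$ is null; since $\lambda$ is not null, $gL^{-2}$ is not flat. On the other hand $gL^{-1}$ is flat and, by Corollary \ref{cor:1}, geodesically compatible with $L$, so Lemma \ref{bolsadd1} (now applied to the flat metric $gL^{-1}$ and the operator $L$) guarantees that $gL^{-2}=(gL^{-1})L^{-1}$ has constant curvature. Being of constant curvature but not flat, $gL^{-2}$ therefore has constant non-zero curvature, avoiding any need to compute the curvature explicitly.

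It remains to show, in this second case, that $gL^{-3}$ is not of constant curvature. Here I would iterate Corollary \ref{cor:1} once more to obtain that $gL^{-2}$ is geodesically compatible with $L$; since $gL^{-2}$ is now an Einstein metric of constant non-zero scalar curvature and $\mathrm{d}\operatorname{tr} L\ne 0$, Theorem \ref{thm:einstein} yields that $gL^{-3}=(gL^{-2})L^{-1}$ is not of constant curvature. The final assertion then follows at once: in the first branch $gL^{-2}$ is not even of constant curvature, and in the second it has constant non-zero curvature, so in neither case is $gL^{-2}$ flat. The substance of the argument is carried entirely by Theorem \ref{thm:einstein} together with Lemma \ref{lem:two}; the remaining work is bookkeeping of hypotheses, and the only genuinely delicate point is the positivity $g^{ij}\lambda_i\lambda_j>0$, without which the whole dichotomy would collapse.
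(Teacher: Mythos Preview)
Your proof is correct and follows essentially the same route as the paper. The paper's argument is the terse sentence ``If $\lambda_i$ is parallel, then in the positively definite case it is not null \dots\ Applying Lemma~\ref{lem:two} we obtain the required claim,'' with the role of Theorem~\ref{thm:einstein} indicated in the paragraph preceding the corollary; you have simply unpacked the two cases $K\ne 0$ and $K=0$ and made explicit the iterated use of Corollary~\ref{cor:1} that the paper leaves implicit.
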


\begin{proof} 
If $\lambda_i$ is  parallel, then in the positively definite case it is not null (recall that the condition $\mathrm{d}\operatorname{tr} L\ne 0$ implies that  $\lambda_i\ne 0$). Applying   Lemma  \ref{lem:two} we obtain the required claim. 
\end{proof}

\section{Proof of Theorem \ref{thm:main4}} 

\begin{proof}
Suppose $L$ is differentially-non-degenerate, then at almost every point it  has $n$ different eigenvalues. We will work in a small neighbourhood of such a point and    first consider the case when the eigenvalues are real. In this case, 
there exists a coordinate system $x^1,\dots,x^n$ such that $L$ is given by 
\begin{equation}  \label{eq:ourL}
\operatorname{diag}\left(x^1, x^2,\dots,x^n\right).\end{equation} 
 Let  the metrics $gL^{-k}$ with $k \in \{0,\dots,n+1\}$
satisfy the assumptions of Theorem \ref{thm:main4}. Since  the (0,2)-tensor $gL^{-1}$ is symmetric, $g$ is diagonal:
\begin{equation} \label{eq:g} 
g:= \operatorname{diag}\left(g_1, g_2,\dots,g_n\right),
\end{equation}
where $g_i=g_i(x^1,\dots,x^n)$ are some functions.  
Let us show that in this coordinate system the metric is given  (up to a constant) by \eqref{ex:LC}, which implies that $L$ is geodesically compatible to $g$.

We need the following technical Lemma:

\begin{Lemma}  \label{Lem:4} 
Let $g$  be diagonal as in \eqref{eq:g}   and 
$$
L= \operatorname{diag}\left({h_1(x^1)}, {h_2(x^2)},
\dots,{h_n(x^n)}\right)$$
with the function $h_i$ depending on the coordinate $x^i$ only. 

Then the Christoffel symbols $\Gamma^{i}_{jk}$ of the metric $g$ are  as follows:

\begin{itemize}
    \item $\Gamma^k_{ij} = 0$ for pairwise different $i, j$ and $k$, 
    \item $\Gamma^k_{kj} = \frac{1}{2} \frac{1}{g_k} 
		\frac{\partial g_k}{\partial x^j} $ for arbitrary $k, j$,
    \item $\Gamma^k_{jj} = - \frac{1}{2} \frac{1}{g_k} \frac{\partial g_j}{\partial x^k}$ for arbitrary  $k \neq j$.
\end{itemize}

Consequently, the Christoffel symbols $\bar \Gamma^{i}_{jk}$ of the metric $\bar g=  gL^{-1}$ are  as follows:

\begin{itemize}
    \item $\bar \Gamma^k_{ij} = 0$ for pairwise different $i, j$ and $k$, 
     \item $\bar \Gamma^k_{kj} = \Gamma^k_{kj} $ for arbitrary $k\ne j$,
		 \item $ \bar  \Gamma^k_{jj} = \frac{h_k}{h_j}\Gamma_{jj}^k $ for arbitrary $k \neq j$,
		\item $\bar \Gamma^k_{kk} =  \Gamma^k_{kk}- 
		\frac{1}{2} \frac{1}{h_k} 
		h_k'      $ for arbitrary $k$,  
\end{itemize}
 (each $h_k$ is a function of one variable, so $h_k'$  in the latter formula and below, in e.g. \eqref{eq:rijk}, is just the usual derivative, $h_k'=\tfrac{\partial h_k}{\partial x^k} $ ).
\end{Lemma}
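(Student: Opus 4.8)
The plan is to treat the entire statement as a direct computation from the standard coordinate expression for the Levi-Civita connection,
\[
\Gamma^k_{ij} = \tfrac{1}{2}\, g^{k\ell}\left(\frac{\partial g_{i\ell}}{\partial x^j} + \frac{\partial g_{j\ell}}{\partial x^i} - \frac{\partial g_{ij}}{\partial x^\ell}\right),
\]
specialised to diagonal metrics. First I would record that for $g=\operatorname{diag}(g_1,\dots,g_n)$ the inverse is $g^{k\ell}=\tfrac{1}{g_k}\delta^{k\ell}$ (no summation), so the contraction over $\ell$ collapses to $\ell=k$ and the formula reduces to
\[
\Gamma^k_{ij} = \tfrac{1}{2}\,\frac{1}{g_k}\left(\frac{\partial g_{ik}}{\partial x^j} + \frac{\partial g_{jk}}{\partial x^i} - \frac{\partial g_{ij}}{\partial x^k}\right).
\]
The three claimed formulas for $\Gamma$ then drop out of a short case analysis on the coincidences among $i,j,k$: when $i,j,k$ are pairwise distinct each of $g_{ik},g_{jk},g_{ij}$ is off-diagonal and hence zero; when $i=k$ the last two terms cancel, leaving $\tfrac12\tfrac1{g_k}\tfrac{\partial g_k}{\partial x^j}$; and when $i=j\neq k$ the first two terms vanish (as $g_{jk}=0$), leaving $-\tfrac12\tfrac1{g_k}\tfrac{\partial g_j}{\partial x^k}$. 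This settles the first list.

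For the second list I would first observe that $\bar g=gL^{-1}$ is again diagonal, with entries $\bar g_k=g_k/h_k$, so the formulas just derived apply verbatim to $\bar g$ with $g_k$ replaced by $g_k/h_k$. The pairwise-distinct case gives $\bar\Gamma^k_{ij}=0$ immediately. The decisive structural fact is that each $h_k$ depends only on $x^k$, so $\tfrac{\partial h_k}{\partial x^j}=0$ whenever $j\neq k$. Hence in computing $\bar\Gamma^k_{kj}$ and $\bar\Gamma^k_{jj}$ with $k\neq j$ the relevant factor ($h_k$, respectively $h_j$) passes through the derivative as a constant, and after cancelling $\tfrac{1}{\bar g_k}=h_k/g_k$ against these factors one recovers exactly $\Gamma^k_{kj}$ and $\tfrac{h_k}{h_j}\Gamma^k_{jj}$.

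The only place where a genuinely new term appears is $\bar\Gamma^k_{kk}$, and this is the one step I would flag as requiring care: here the derivative is taken with respect to $x^k$, precisely the variable on which $h_k$ does depend, so differentiating $\bar g_k=g_k/h_k$ by the quotient rule produces the extra contribution $-\tfrac{g_k}{h_k^2}h_k'$. Multiplying by $\tfrac12\tfrac{1}{\bar g_k}=\tfrac12\tfrac{h_k}{g_k}$ splits the result into $\Gamma^k_{kk}$ plus the correction $-\tfrac12\tfrac1{h_k}h_k'$, matching the claim. I expect no real obstacle beyond bookkeeping: the whole content of the lemma is the dichotomy between derivatives along $x^k$, which see $h_k'$, and derivatives along the other coordinates, which annihilate $h_k$, and keeping this distinction straight is the entire game.
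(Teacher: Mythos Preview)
Your proposal is correct and follows precisely the approach the paper itself indicates: substitute the diagonal metric into the standard Christoffel formula and then track how replacing $g_k$ by $g_k/h_k$ affects each case, using that $h_k$ depends on $x^k$ only. The paper in fact leaves this lemma as an exercise with exactly this hint, so there is nothing to add.
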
 

We leave the proof as a simple  exercise for the reader.  One needs  to substitute the components of $g$ in 
the formula $\Gamma^i_{jk}= \frac{1}{2} g^{si}\left( \tfrac{\partial g_{sk}}{\partial x^j}+ \tfrac{\partial g_{sj}}{\partial x^k} - \tfrac{\partial g_{jk}}{\partial x^s}\right)$ for the Christoffel symbols of $ g$ and then look how multiplications of $g_i$ with $1/h_i$ affects the formula.

\begin{Lemma} \label{Lem:5} 
The components of the curvature tensors $R^i_{\ jk\ell}$ and $\bar R^i_{\ jk\ell}$ of the connections $\Gamma$ and $\bar \Gamma$ from Lemma \ref{Lem:4} are  as follows: (no summation over repeating indices) 
\begin{itemize}
    \item $R^s_{\ ijk} = 0$ for arbitrary pairwise different $i, j, k$ and $s$, 
    \item $R^i_{\ ijk} = 0$ for arbitrary $i, j, k$,
    \item $R^j_{\ ijk} =  - \pd{\Gamma^j_{ij}}{x^k} + \Gamma^j_{ji} \Gamma^{i}_{ik} + \Gamma^j_{jk} \Gamma^{k}_{ik} - \Gamma^j_{kj} \Gamma^{j}_{ij}$ for arbitrary $j \neq i$ and $i \neq k$,
    \item $R^j_{\ iji} = \pd{\Gamma^j_{ii}}{x^j} - \pd{\Gamma^j_{ij}}{x^i} + \sum \limits_{\alpha = 1}^n \Gamma^j_{j \alpha} \Gamma^{\alpha}_{ii} - \Gamma^j_{ii} \Gamma^{i}_{ij} - \Gamma^i_{ij} \Gamma^i_{ij}$ for arbitrary $i$ and $j$,
    \item $R^k_{\ iji} = - R^k_{\ iij} = \pd{\Gamma^k_{ii}}{x^j} + \Gamma^k_{jj}\Gamma^{j}_{ii} + \Gamma^k_{jk} \Gamma^{k}_{ii} - \Gamma^k_{ii} \Gamma^{i}_{ij}$ for arbitrary $k \neq i$ and $k \neq j$.
\end{itemize}

\begin{itemize}
\item $\bar R^s_{\ ijk} = 0$ for arbitrary pairwise different $i, j, k$ and $s$,
\item $\bar{R}^i_{\ ijk} = 0$ for arbitrary $i, j, k$,
    \item $\bar{R}^j_{\ ijk} = R^j_{\ ijk}$ for arbitrary $j \neq i$ and $i \neq k.$
    \item For arbitrary $i\ne  j$,
		\begin{equation}\label{eq:rijk} \bar{R}^j_{\ iji} = \frac{h_j}{h_i} \pd{\Gamma^j_{ii}}{x^j} - \pd{\Gamma^j_{ij}}{x^i} + \sum \limits_{\alpha = 1}^n \frac{h_\alpha}{h_{i}} \Gamma^j_{j \alpha} \Gamma^{\alpha}_{ii} - \frac{h_j}{h_i} \Gamma_{ii}^j \Gamma^{i}_{ij} - \Gamma^j_{ij} \Gamma^j_{ij} -\frac{1}{2}\frac{h_i'}{h_i} \Gamma^j_{ij}+ \frac{1}{2  h_i}h_j' \Gamma^j_{ii}\ ,\end{equation} 
		
    \item $\bar{R}^k_{\ iji} = \frac{h_k}{h_i} R^k_{\ iji}$ \ .
\end{itemize}
\end{Lemma}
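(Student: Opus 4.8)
The plan is to obtain both curvature tensors by straightforward substitution of the Christoffel symbols of Lemma~\ref{Lem:4} into the standard curvature formula \eqref{standard},
\[
R^i_{\ jk\ell} = \partial_{x^k}\Gamma^i_{\ell j} - \partial_{x^\ell}\Gamma^i_{kj} + \Gamma^i_{ks}\Gamma^s_{\ell j} - \Gamma^i_{\ell s}\Gamma^s_{kj},
\]
and the analogous one with $\bar\Gamma$. The whole computation is organised around one structural observation: for a diagonal metric with $L$ as in Lemma~\ref{Lem:4}, a symbol $\Gamma^a_{bc}$ (and likewise $\bar\Gamma^a_{bc}$) can be nonzero only when at least two of the three indices $a,b,c$ coincide. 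Hence every product $\Gamma^i_{ks}\Gamma^s_{\ell j}$ survives only for the very few values of the summation index $s$ that keep both factors admissible, and this is exactly what collapses the general formula to the short lists in the statement.

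For the tensor $R$ I would split into cases by the coincidence pattern of $(i,j,k,\ell)$. When all four indices are pairwise distinct, both derivative terms and both quadratic terms involve only distinct-index symbols, so $R^s_{\ ijk}=0$ follows purely from the structural observation. When the contravariant index equals the first covariant one, the symbols do not all vanish, but a short computation exploiting their explicit form (the second-derivative terms cancel in pairs, and the two surviving quadratic products coincide) yields $R^i_{\ ijk}=0$. The genuinely nonzero components arise when exactly two indices coincide, producing the off-diagonal entries $R^j_{\ ijk}$ and the ``sectional'' entries $R^j_{\ iji}$ and $R^k_{\ iji}$; in each of these the admissible summation values are confined to $s\in\{i,j,k\}$, and one merely records which of the two symbol types $\Gamma^k_{kj}$, $\Gamma^k_{jj}$ appears. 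The relation $R^k_{\ iji}=-R^k_{\ iij}$ is just antisymmetry in the last pair of indices.

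For $\bar R$ I would not recompute from scratch but feed the dictionary of Lemma~\ref{Lem:4} into the same formula: the mixed symbols $\bar\Gamma^k_{kj}$ ($k\ne j$) are unchanged, the symbols $\bar\Gamma^k_{jj}$ ($k\ne j$) carry the factor $h_k/h_j$, and the diagonal symbols are shifted by $-\tfrac12 h_k'/h_k$. Tracking these factors term by term gives at once $\bar R^j_{\ ijk}=R^j_{\ ijk}$ (all symbols involved are of the unchanged mixed type) and the uniform rescaling $\bar R^k_{\ iji}=\tfrac{h_k}{h_i}R^k_{\ iji}$, since every term of $R^k_{\ iji}$ acquires precisely the same factor $h_k/h_i$; here one uses that, because $k,i,j$ are distinct, the differentiation $\partial_{x^j}\bar\Gamma^k_{ii}$ does not touch $h_k$ or $h_i$. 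The delicate component is $\bar R^j_{\ iji}$: differentiating $\bar\Gamma^j_{ii}=\tfrac{h_j}{h_i}\Gamma^j_{ii}$ produces a spurious $\tfrac{h_j'}{h_i}\Gamma^j_{ii}$, while the diagonal shift $-\tfrac12 h_j'/h_j$ hidden in $\bar\Gamma^j_{jj}$ contributes $-\tfrac12\tfrac{h_j'}{h_i}\Gamma^j_{ii}$ through the quadratic term $\bar\Gamma^j_{jj}\bar\Gamma^j_{ii}$; these combine into the single correction $\tfrac{1}{2h_i}h_j'\Gamma^j_{ii}$. Likewise the shift $-\tfrac12 h_i'/h_i$ inside $\bar\Gamma^i_{ii}$ enters through the term $\bar\Gamma^j_{ji}\bar\Gamma^i_{ii}$ and yields $-\tfrac12\tfrac{h_i'}{h_i}\Gamma^j_{ij}$, so that altogether one recovers \eqref{eq:rijk} exactly.

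The only real obstacle is disciplined bookkeeping: in each quadratic term one must correctly identify which index plays the ``diagonal'' role so as to attach the right ratio $h_a/h_b$, and one must keep separate the derivative of a symbol from the derivative of its prefactor $h_a/h_b$ (the latter being nonzero precisely in the $\bar R^j_{\ iji}$ case, because $h_j$ depends on the very variable $x^j$ with respect to which we differentiate). There is no conceptual difficulty beyond this; the entire proof is a finite, case-by-case verification, which is why it is reasonable to leave the routine parts to the reader.
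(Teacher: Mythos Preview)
Your proposal is correct and follows exactly the approach indicated in the paper: substitute the Christoffel symbols from Lemma~\ref{Lem:4} into the standard curvature formula~\eqref{standard}, and for $\bar R$ track the effect of the replacements $\Gamma\mapsto\bar\Gamma$ given in Lemma~\ref{Lem:4}. The paper in fact leaves this entire verification as an exercise, so your sketch is considerably more detailed than what the paper provides; in particular, your explicit accounting for the two sources of the corrections $\tfrac{1}{2h_i}h_j'\Gamma^j_{ii}$ and $-\tfrac12\tfrac{h_i'}{h_i}\Gamma^j_{ij}$ in $\bar R^j_{\ iji}$ is exactly the bookkeeping that the paper's proof declines to spell out.
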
 

We again 
leave  the proof of the Lemma as an exercise (which was done many times before us, see e.g.  \cite[\S 4]{Mokhov1}) 
for the reader. 
One  needs  to substitute $\Gamma^i_{jk}$ given by Lemma \ref{Lem:4} into the standard formula for the curvature
\begin{equation}  \label{standard} 
   R^\ell_{\ ijk} = \tfrac{\partial  }{\partial x^j} \Gamma^\ell_{ik} - \tfrac{\partial  }{\partial x^k} \Gamma^\ell_{ij}+ \Gamma^\ell_{js} \Gamma^s_{ik} - \Gamma^\ell_{ks} \Gamma^s_{ij}.   
\end{equation}
and then carefully implement the  changes in the  resulting  
formula induced by  replacing $\Gamma$ by $\bar \Gamma$ via formulas in Lemma \ref{Lem:4}.

\begin{Rem} \label{rem:convention} In our conventions used throughout the paper, the metric has  constant  curvature $K$, if (after raising the second index by $g$) it is given by:
$$
R^{ ij}_{\ \ km}= K \left(\delta^i_k \delta^j_m -\delta^i_m \delta^j_k\right).
$$
\end{Rem}

Note also that because of algebraic symmetries of curvature tensors, the components listed in Lemma \ref{Lem:5} are essentially all components.

By our assumptions, $g$ is flat which implies that  the components 
$\bar R^k_{\ iji}$ and $\bar{R}^j_{\ ijk}$  are zero. Thus, the    components of $\bar R$ which are ``interesting for us''
(in the sense that only they   have chance to be non-zero)   are those considered in 
\eqref{eq:rijk}.  It is  convenient to raise 
the index $i$ by $\bar g$ (since $\bar g$ is diagonal, this operation is just the  multiplication  
by $\tfrac{h_i}{g_i}$). 
We obtain:

\begin{equation} \label{eq:c} \bar{R}^{ij}_{\ \ ij} = 
\frac{h_j}{g_i}  \pd{\Gamma^j_{ii}}{x^j} - \frac{h_i}{g_i} \pd{  \Gamma^j_{ij}}{x^i} + 
\sum \limits_{\alpha = 1}^n \frac{h_\alpha}{g_{i}} \Gamma^i_{j \alpha} \Gamma^{\alpha}_{ii} - \frac{h_j}{g_i} \Gamma_{ii}^j \Gamma^{i}_{ij} - \frac{h_i}{g_i}\Gamma^j_{ij} \Gamma^j_{ij} -\frac{h_i'}{2 g_i} \Gamma^j_{ij} + \frac{h_j'}{2g_i} \Gamma^j_{ii}.\end{equation}

\begin{Rem} \label{rem:last}  {\rm To be used in the proof of Corollary \ref{cor:compatible}, let us observe   that  the equation  above is linear in $h_i$.  This 
was expected  in view of results of  \cite{Mokhov1,Mokhov}, and in fact the calculations above are similar to those in the corresponding places in \cite{Mokhov1}. The same is true if part of eigenvalues is complex-conjugated, see the discussion at the end of the proof. 
}\end{Rem}

Next, let us return
to our $L$ given by \eqref{eq:ourL} and  employ the condition that  $gL^{-k}$ are flat for $k= 0,\dots,n$ and $gL^{-n-1}$ has constant curvature $K$. 
Because the equations \eqref{eq:c} are linear in $h$, 
and since for a polynomial $P$ the matrix
 $P(L)$ 
is diagonal  
$\operatorname{diag}(P(x^1),\dots,P(x^n))$,  we have that  
for a polynomial $P$ 
  (such that $P(L)$ is non-degenerate)
the ``interesting components'' of the curvature tensor 
 of  the metric $gP(L)^{-1}$
are given by  \eqref{eq:c} with all $h_i$ replaced by $P(x^i)$.

By our assumptions 
 the metric   $gP(L)^{-1}$ is flat for  $P(t)= t^k$ 
 with  $k\le n$ and has constant curvature $K$ 
for $P(t)= t^{n+1}$. Using the linearity we can combine these conditions as follows: 
for any polynomial $P= a_{n+1} t^{n+1}+\dots+ a_0$ 
(of degree $\le n+1$) we have
\begin{equation} \label{eq:c1} \begin{array}{r}
\frac{P(x^j)}{g_i}  \pd{\Gamma^j_{ii}}{x^j} - \frac{P(x^i)}{g_i} \pd{  \Gamma^j_{ij}}{x^i} + 
\sum \limits_{\alpha = 1}^n \frac{P(x^\alpha)}{g_{i}} \Gamma^j_{j \alpha} \Gamma^{\alpha}_{ii} - \frac{P(x^j)}{g_i} \Gamma_{ii}^j \Gamma^{i}_{ij} \\ - \frac{P(x^i)}{g_i}\Gamma^j_{ij} \Gamma^j_{ij} -\frac{P'(x^i)}{2 g_i} \Gamma^j_{ij} + \frac{P'(x^j)}{2g_i} \Gamma^j_{ii}= a_{n+1} K. \end{array} \end{equation}

Let us view \eqref{eq:c1} as an equation on the functions $\Gamma^i_{jk}$, the following trick allows us  to find those $\Gamma^i_{jk}$ from this equation 
which are not automatically  zero by Lemma \ref{Lem:4}.

At  the  point $p= (\hat x_1,\dots,\hat x_n)$  as the polynomial $P$ we take  
$$
P_0(t)=  (t-\hat x_1)(t-\hat x_2)\dots (t-\hat x_n) \ \ \textrm{and} \ \   P_1(t)= t(t-\hat x_1)(t-\hat x_2) \dots (t-\hat x_n). 
$$ 
For both polynomials we have $P(\hat x_i)= 0$ for each $i$; for the polynomial $P_0$ we have 
$$
P'_0(\hat x_j) =  \prod_{s \ne j}(\hat x_j - \hat x_s) \ \textrm{and} \ \ P'_0(\hat x_i) =  \prod_{s \ne i}(\hat x_i - \hat x_s) 
$$
and for the polynomial $P_1$ we have  
$$
P'_1(\hat x_j) =  \hat x_j\prod_{s \ne j}(\hat x_j - \hat x_s) \ \textrm{and} \ \ P'_1(\hat x_i) =  \hat x_i\prod_{s \ne i}(\hat x_i - \hat x_s) .
$$
Substituting $P_0$ into \eqref{eq:c1} and evaluating the result at $p= (\hat x_1,\dots ,\hat x_n)$
 we see that most terms vanish and obtain the equation 
\begin{equation} \label{eq:c2} - \Gamma^j_{ij}\prod_{s \ne i}(\hat x_i - \hat x_s) +  \Gamma^j_{ii} \prod_{s \ne j}(\hat x_j - \hat x_s)= 0.  
\end{equation} 
Similarly, doing the same with $P_1$, we obtain 
\begin{equation} \label{eq:c3} -\hat x_i  \Gamma^j_{ij}\prod_{s \ne i}(\hat x_i - \hat x_s) + \hat x_j \Gamma^j_{ii} \prod_{s \ne j}(\hat x_j - \hat x_s)=  2 g_i K.  
\end{equation} 

The equations (\ref{eq:c2}, \ref{eq:c3}) are two linear equations on two unknowns $\Gamma^j_{ij}$ and $\Gamma^j_{ii}$.  Solving them at the point $p=(\hat x_1,\dots, \hat x_n)$ and taking into account that this point is arbitrary, we obtain:
 \begin{equation}\label{eq:gamma}
\Gamma^j_{ij}= \frac{2 g_iK}{(x^j-  x^i)\prod_{s \ne i}( x^i -  x^s)} \ , \ \ \Gamma^j_{ii}=\frac{2 g_i K}{( x^j-  x^i)\prod_{s \ne j}(x^j -  x^s)}.\end{equation}

Plugging them in the formulas from Lemma \ref{Lem:4}  we obtain (at every point $(x^1,\dots ,x^n)$): 

 \begin{eqnarray} \frac{2 g_iK}{(x^j- x^i)\prod_{s \ne i}( x^i -  x^s)}  &=& \frac{1}{2} \frac{1}{g_j} 
		\frac{ \partial g_j}{\partial x^i} \label{eq:gamma1}  \\ 
    \frac{2 g_iK}{(x^j-  x^i) \prod_{s \ne j}( x^j -  x^s)} &=& - \frac{1}{2} \frac{1}{g_j} \frac{\partial g_i}{\partial x^j} \label{eq:gamma2}\end{eqnarray}
(Notice that swapping $i$ and $j$ in \eqref{eq:gamma1} we obtain the formula equivalent   to \eqref{eq:gamma2} which was of course expected).

\begin{Rem}\label{rem:future}{\rm
From \eqref{eq:gamma1} we see that for  $K=0$ 
 each $g_i$ depends on the variable $x^i$ only, as we claimed in Remark \ref{rem:past}.   In this case, the pair 
 $(g,L)$ is not geodesically compatible.   }
\end{Rem} 

In what follows we  assume that the dimension $n \ge 3$. 
 Recall that  for dimension two Theorem \ref{thm:main4}  was proved in \cite[Theorem 10]{Ferapontov-viniti} and  in  \cite[Theorem 6.2]{Mokhov1}. 
 See also Remark \ref{proof:dim2} below.   Take $k\not\in \{i,j\}$ and consider the polynomial $P= \prod_{s\ne k}(t-\hat x_s)$ of degree $n-1$.
 For this polynomial, formula \eqref{eq:c1}  evaluated at the point $p=(\hat x_1,\dots ,\hat x_n)$ takes the form: 
\begin{equation} \label{eq:100}
P(\hat x_k) \Gamma^j_{jk} \Gamma^k_{ii} - \tfrac{1}{2} P'(\hat x_i) \Gamma^j_{ij} +   \tfrac{1}{2} P'(\hat x_j) \Gamma^j_{ii}=0.
\end{equation}
For our $P$, we have 
$$P'(\hat x_i)= \prod_{s\not\in\{i,k\}} (\hat x_i- \hat x_s) \ \ \textrm{and} \  \  P'(\hat x_j)= \prod_{s\not\in\{j,k\}} (\hat x_j- \hat x_s).$$

Substituting these and   \eqref{eq:gamma} into \eqref{eq:100}, 
 we  obtain: 
$$
\frac{4 g_i g_k K^2 }{(\hat x_j- \hat x_k)(\hat x_k- \hat x_i) \prod_{s\ne k}(\hat x_k- \hat x_s) }   -\tfrac{1}{2}  \frac{2 g_iK}{(\hat x_i - \hat x_k)(\hat x_j- \hat x_i)} +  \tfrac{1}{2} \frac{2 g_i K}{(\hat x_j - \hat x_k)(\hat x_j- \hat x_i)}=0.
$$
In this equation, the factor $g_i K$ cancels out and we obtain 
 the following  explicit  formula for $g_k K$: 
\begin{equation}\label{eq:Kg}
K g_k(\hat x_1,\dots,\hat x_n) = -\tfrac{1}{4}\prod_{s \ne k} (\hat x_k - \hat x_s).
\end{equation}
Since this formula is fulfilled at  every $p=(\hat x_1,\dots,\hat x_n)$, it remains true  if we replace all $\hat x_i$ by $x^i$. 
 We see that the metric is (up to a constant factor)  
as in example in Section \ref{sect2} so $g $  and $L$ are geodesically compatible as we claimed.

Let us now comment on the case when not all eigenvalues of $L$ are real. We may  again assume that all eigenvalues have algebraic multiplicity $1$.   Let $k$ of them are real and  the remaining $n-k$ are partitioned into complex conjugate pairs.

By \cite[Theorems 3.2, 3.3]{Nijenhuis1}, there exists 
a coordinate system   $(x^1,...,x^k, u^1, v^1,...,u^{\tfrac{n-k}{2}},  v^{\tfrac{n-k}{2}})$ such that 
 $L$ being written 
in the formal coordinates $$(x^1,...,x^k, z^1= u^1+ i v^1, \bar z^1= u^1-i v^1, ... , \bar z^{\tfrac{n-k}{2}}= u^{\tfrac{n-k}{2}}-i v^{\tfrac{n-k}{2}})$$ takes the form 
$$
L= \operatorname{diag}(x^1,...,x^k,z^1, \bar z^1,...,\bar z^{\tfrac{n-k}{2}}).
$$
Since $L$ is $g$-symmetric, in these coordinates we also have
$$
g= \operatorname{diag}(g_1,...,g_k, g_{k+1}, \bar g_{k+1},..., g_{\tfrac{n-k}{2}},  \bar g_{\tfrac{n-k}{2}}),
$$
where $g_i$  is a functions    of 
$(x^1,...,x^k,z^1,..., \bar z^{\tfrac{n-k}{2}})$ and $\bar g_i$ is  its complex conjugate.  Since $g$ is a usual real-valued metric, $g_1,...,g_k$ are real.

Let us denote the  formal  coordinates $z^1,..., \bar z^{\tfrac{n-k}{2}}$ by $x^{k+1},...,x^n$. 
Then, the Christoffel symbols and the curvature tensor are given by the same formulas as in the ``real'' case, so the analogs of Lemmas \ref{Lem:4}, \ref{Lem:5}
survive and we come to the system \eqref{eq:c1}. Solving the system in exactly same way as in the real case (where we used only differentiation of polynomials and algebraic manipulations, i.e., those operations that are perfectly defined over complex coordinates too) we conclude that the metric is given, up to a constant factor, by \eqref{ex:LC}. Next, observe that by \cite{BoMa2015} the metric and operator  of the form  
\eqref{ex:LC} are geodesically compatible even if $x^{k+1}= z^1,...,x^n= \bar z^{\tfrac{n-k}{2}}$, i.e., some pairs of coordinates are complex conjugate as shown. Thus, complex roots do not affect the construction and final conclusion.

Now assume that $L$ is given by \eqref{eq:gL}. Under the assumptions of Theorem \ref{thm:main4}, $L$ is geodesically compatible to $g$: indeed, above we have proved that it is geodesically compatible at almost every point which implies that it is geodesically compatible at every point. 
Since the metric $g$ is flat, the pair $(g,L)$ is real-analytic in some coordinates (this is known and  will follow from a description of geodesically compatible $(g,L)$ with $g$ flat).   As we explained  in Section \ref{sect2}, the form \eqref{ex:LC} in the coordinates $\sigma_1,...,\sigma_n$ is
 \eqref{eq:gL}. Theorem \ref{thm:main4} is proved. \end{proof}

\begin{Rem} \label{proof:dim2}{\rm
To be self-contained,  we sketch a proof of Theorem \ref{thm:main4}  in dimension 2. 
From   (\ref{eq:gamma1},\ref{eq:gamma2})
  we can find the derivatives $\tfrac{\partial g_1}{\partial x^2}$ and $\tfrac{\partial g_2}{\partial x^1}$ as functions of $g_1, g_2$:
	\begin{equation}\label{eq:sub}\begin{array}{rcl}	 {\frac {\partial }{\partial x^{{2}}}}{ g_1} &  = & {\frac {-4K{g_1}  {
g_2}  }{ \left( \hat x_{{1}}-\hat x_{{2}}
 \right) ^{2}}}\\{\frac {\partial }{\partial x^{{1}}}}{g_2} & = & {\frac {-4K{ g_1}  { g_2}  }{ \left( \hat x_{{1}}-\hat x_{
{2}} \right) ^{2}}}.  
\end{array}\end{equation}
		Further, we assume $K=1$ without loss of generality. 
		Plugging \eqref{eq:sub}  and also \eqref{eq:gamma} into  \eqref{eq:c1} we obtain  two linear equations in  the remaining derivatives $\tfrac{\partial g_1}{\partial x^1}$ and $\tfrac{\partial g_2}{\partial x^2}$. Solving them, we obtain:
		\begin{equation}\label{eq:sub1}\begin{array}{rcl}
		 {\frac {\partial {g_1}}{\partial x^{{1}}}} &= &2\,{\frac {{ g_1}  
 \left( \hat x_{{1}}-\hat x_{{2}}+2\,{ g_1}  
 \right) }{ \left( \hat x_{{1}}-\hat x_{{2}} \right) ^{2}}}\\{\frac {\partial{ g_2}  }{
\partial x^{{2}}}} &=&-2\,{\frac 
{{ g_2}   \left( \hat x_{{1}}-2\,{ g_2}
  -\hat x_{{2}} \right) }{ \left( \hat x_{{1}}-\hat x_{
{2}} \right) ^{2}}}.  
\end{array}\end{equation}
The equations (\ref{eq:sub},\ref{eq:sub1}) form a PDE system of Cauchy-Frobenius type (i.e., all derivatives of unknown functions 
 are explicitly given as algebraic expressions of unknown functions). Compatibility condition for this system is   
$$2\,{\frac { \left( 4\,{ g_2
}  -\hat x_{{1}}+\hat x_{{2}} \right)  \left( 4\,{
 g_1}  +\hat x_{{1}}-\hat x_{{2}} \right) }{
 \left( \hat x_{{1}}-\hat x_{{2}} \right) ^{4}}}=0
$$
which implies that either $g_2
  =\tfrac{1}{4}(\hat x_{{1}}-\hat x_{{2}})  $ or $g_1= \tfrac{1}{4}(\hat x_{{2}}-\hat x_{{1}})$.  Either of these formulas combined with \eqref{eq:sub} implies \eqref{eq:Kg}. 
}\end{Rem}

\end{document}